\numberwithin{equation}{section}
\theoremstyle{definition}
\newtheorem{theorem}{Theorem}[section]
\newtheorem{corollary}[theorem]{Corollary}
\newtheorem{lemma}[theorem]{Lemma}
\newtheorem*{corollary*}{Corollary} 
\newtheorem*{definition*}{Definition}
\newtheorem{definition}[theorem]{Definition}
\newtheorem*{remark*}{Remark}
\newtheorem{remark}[theorem]{Remark}
\newtheorem{proposition}[theorem]{Proposition}
\newtheorem*{proposition*}{Proposition}
\crefname{theorem}{Theorem}{Theorems}
\crefname{lemma}{Lemma}{Lemmas}
\crefname{proposition}{Proposition}{Propositions}
\crefname{definition}{Definition}{Definitions}
\newcommand{\norm}[1]{\ensuremath{\left\| {#1} \right\|}}
\DeclareMathOperator{\dv}{div}
\DeclareMathOperator{\dist}{dist}
\DeclareMathOperator{\diam}{diam}
\DeclareMathOperator{\osc}{osc}
\newcommand{\ee}{\ensuremath{\varepsilon}}
\newcommand{\bb}{\ensuremath{\mathbf{b}}}
\newcommand{\ff}{\ensuremath{\mathbf{F}}}
\newcommand{\bbb}{\ensuremath{\mathbf{B}}}
\title[Linearized Monge-Amp\`ere equations in divergence form]
{Interior Harnack inequality and H\"older estimates for linearized Monge-Amp\`ere Equations
in divergence form with Drift}
\author{Young Ho Kim}
\address{Department of Mathematics, Indiana University, Bloomington, IN 47405, USA}
\email{yk89@iu.edu}
\begin{document}

\subjclass[2020]{35J15, 35J70, 35J75.}
\keywords{
	Linearized Monge-Amp\`ere equation,
	Harnack inequality,
	H\"older estimate,
	Moser iteration,
	Monge-Amp\`ere Sobolev inequality.
}
\begin{abstract} 
	In this paper, 
	we study interior estimates for solutions to linearized Monge-Amp\`ere equations
	in divergence form with drift terms and the right-hand side containing 
	the divergence of a bounded vector field.
	Equations of this type appear in the study of semigeostrophic equations in meteorology
	and the solvability of singular Abreu equations in the calculus of variations
	with a convexity constraint.
	We prove an interior Harnack inequality and H\"older estimates for solutions to 
	equations of this type in two dimensions,
	and under an integrability assumption on the Hessian matrix of the Monge-Amp\`ere potential in higher dimensions.
	Our results extend those of Le (\emph{Analysis of Monge-Amp\`ere equations},
	Graduate Studies in Mathematics, vol.240, American Mathematical Society, 2024)
	to equations with drift terms.
\end{abstract}

\maketitle

\section{Introduction and Statements of the Main Results}

In this paper, we are interested in the interior estimates for solutions $u:\Omega\rightarrow\mathbb{R}$ to 
linearized Monge-Amp\`ere equations of the form
\begin{equation}\label{eqn}
\begin{aligned}
	-\dv(\Phi Du + u\bbb) + \bb \cdot Du = f - \dv\ff
\end{aligned}
\end{equation}
in a bounded domain $\Omega\subset\mathbb{R}^n$, $n\geq 2$,
where $\bb$, $\bbb$, $\ff$: $\Omega\rightarrow\mathbb{R}^n$ are bounded vector fields,
$f\in L^n$, and
\begin{equation}\label{cofdef}
\begin{aligned}
	\Phi = (\Phi^{ij})_{1\leq i,j\leq n} = (\det D^2 \varphi) (D^2\varphi)^{-1}
\end{aligned}
\end{equation}
is the cofactor matrix of the 
Hessian matrix 
$$D^2\varphi = (D_{ij}\varphi)_{1\leq i,j\leq n} 
= \left(\frac{\partial^2\varphi}{\partial x_i \partial x_j}\right)_{1\leq i,j\leq n}\text{.}$$
Here $\varphi$ is a $C^3$ convex Monge-Amp\`ere potential satisfying
\begin{equation}\label{maeqn}
\begin{aligned}
	0 < \lambda \leq \det D^2\varphi \leq \Lambda
	\quad\text{ in } \Omega
	\text{.}
\end{aligned}
\end{equation}

As the cofactor matrix $\Phi$ is divergence-free, that is,
$D_i \Phi^{ij} = 0$ for all $j$, 
the left-hand side of (\ref{eqn}) can also be written in 
nondivergence form and we have
\begin{align*}
-\Phi^{ij}D_{ij}u + (\bb-\bbb)\cdot Du -(\dv\bbb) u = f-\dv\ff
\text{.}
\end{align*}
We will focus on the divergence form and the case when $\ff\neq 0$,
and obtain interior estimates for $u$ using its integral information.

\subsection{Linearized Monge-Amp\`ere Equations}
Linearized Monge-Amp\`ere equations arise in several contexts
such as affine maximal surface equation in affine geometry \cite{TW00, TW05, TW08},
K\"ahler metrics of constant scalar curvature in complex geometry \cite{CHLS14, CLS, Don05, FS},
solvability of Abreu type equations in complex geometry 
and in the calculus of variations with a convexity constraint \cite{Ab, Zhou, CW, CR, Singular_Abreu, Convex_Approx, Abreu_HD, KLWZ, WZ},
and semigeostrophic equations in meteorology \cite{ACDF, Fig, Loeper05, SG_LMA}.

For a strictly convex function $\varphi\in C^2(\Omega)$ satisfying (\ref{maeqn}),
the cofactor matrix $\Phi$ is positive definite,
but we cannot expect structural bounds on its eigenvalues.
Hence, the linearized Monge-Amp\`ere operator is an elliptic operator
that can be degenerate and singular.

Starting with the seminal result of Caffarelli-Guti\'errez \cite{CG97}
on the homogeneous equation 
\begin{align*}
\Phi^{ij} D_{ij} u = \dv (\Phi Du) = 0
\text{,}
\end{align*}
linearized Monge-Amp\`ere equations have been studied by many authors
\cite{GN1, GN2, Boundary_Harnack, Twisted_Harnack, LNW2p, LN, LS, S, Maldonado, MalW2, M17, TZ}.
The term $\dv \ff$ in (\ref{eqn}) appears in the study of semigeostrophic equations in meteorology.
Specifically, we have equations of the form 
\begin{equation}\label{eqndiv}
\begin{aligned}
	\dv (\Phi Du) = \dv\ff
	\text{.}
\end{aligned}
\end{equation}
See \cite[equation (13)]{Loeper05},
\cite[equation (1.5)]{SG_LMA} and \cite[equation (15.51)]{Le24}.

For equations of this type, 
Loeper \cite{Loeper05} proved the H\"older estimate of solutions using integral information of $u$ 
under the assumption that $\det D^2\varphi$ is close to a constant.
Roughly speaking, Loeper needed this condition to apply the results of 
Murty-Stampacchia \cite{MS} and Trudinger \cite{Trudinger1973};
see Section 1.3 for more information.
Le \cite{SG_LMA} proved the same result when $n=2$ with just the assumption in (\ref{maeqn}).
Le \cite[Theorem 15.6]{Le24} also proved the H\"older estimate when $n\geq 3$
under an integrability assumption on the Hessian matrix $D^2\varphi$,
that is, $D^2\varphi\in L^s$ for $s>n(n-1)/2$.
This equation was also studied by Wang \cite{W},
where the H\"older estimate is proved 
under an integrability assumption on $(D^2\varphi) ^{1/2}\ff$;
more precisely, when $(D^2\varphi) ^{1/2}\ff\in L^q$, $q>n$.
In Wang \cite{W}, the upper bound for the H\"older norm contains the $L^\infty$ norm of the solution $u$,
while in Le \cite{Le24}, the $L^p$ norm ($p>1$) is used.

The main difference between (\ref{eqn}) and (\ref{eqndiv})
is the existence of drift terms $-\dv(u\bbb)$ and $\bb\cdot Du$.
When $\ff=0$, equations of the form (\ref{eqn}) 
with nonzero drift terms ($\bb, \bbb\neq 0$) have been studied
by Maldonado \cite{Maldonado, M18, M19} and Le \cite{Le18, Twisted_Harnack}.
These appear in the solvability of singular Abreu equations in higher dimensions in complex geometry 
and in the calculus of variations with a convexity constraint 
\cite[equations (2.2) and (2.5)]{KLWZ}.

\subsection{The Main Results}
In this paper, we will consider equations 
of type (\ref{eqn}) that have both the drift terms, and also $\dv \ff$, 
in dimension two and
under an integrability assumption on $D^2\varphi$ in higher dimensions.
Our main results are the following theorems on interior Harnack inequality and H\"older estimates.
They extend the result of Le \cite{Le24} to equations with drift terms.

Our first result is the following Harnack inequality.
\begin{theorem}[Harnack inequality]\label{thm:harnack}
	Let $\varphi\in C^3(\Omega)$ be a convex function satisfying (\ref{maeqn}).
	Suppose that $\ff,\bb,\bbb\in  W_{\mathrm{loc}}^{1,n}(\Omega;\mathbb{R}^n)
	\cap L_{\mathrm{loc}}^\infty(\Omega;\mathbb{R}^n)$, 
	$f\in L_{\mathrm{loc}}^n(\Omega)$, $n/2<r\leq n$, and $\dv \bbb\leq 0$.
	Assume that $S_\varphi(x_0,2h)\Subset\Omega$, 
	where $S_\varphi(x_0,\cdot)$ is the section defined in Definition \ref{def:sec}.
	Let $u \in W^{2,n}(S_\varphi(x_0,h))$ 
	be a nonnegative solution to (\ref{eqn}) in $S_\varphi(x_0,h)$ and let $t\leq h/2$.
	Further assume that
	\begin{enumerate}
		\item either $n= 2$, or 
		\item $n\geq 3$ and $\ee^*(n,\lambda,\Lambda) + 1 > \frac{n}{2}$,
		where $\ee^*$ is the exponent in the interior $W^{2,1+\ee}$ estimate
		for the Monge-Amp\`ere equation in Theorem \ref{intw2pest}.
	\end{enumerate}
	Then, there are positive constants $C$ and $\gamma$ such that
	\begin{equation*}
	\begin{aligned}
		\sup_{S_\varphi(x_0,t)} u
		\leq C\left(
			(\norm{\ff}_{L^\infty(S_\varphi(x_0,h))}+\norm{f}_{L^r(S_\varphi(x_0,h))})t^\gamma 
			+\inf_{S_\varphi(x_0,t)} u
		\right)
		\text{.}
	\end{aligned}
	\end{equation*}
	Here the constants $C$ and $\gamma$ are given by
	\begin{equation*}
	\begin{aligned}
		\gamma &=\gamma(n,\lambda,\Lambda,r) > 0
		\text{,}\quad\text{and }\\
		C&=C(n,\lambda,\Lambda,r,\ee^*,\norm{\bb}_{L^\infty(S_\varphi(x_0,h))},
			\norm{\bbb}_{L^\infty(S_\varphi(x_0,h))}, \norm{\dv \bbb}_{L^n(S_\varphi(x_0,h))},
			h, \diam (S_\varphi (x_0,2h)))
		\text{.}
	\end{aligned}
	\end{equation*}
\end{theorem}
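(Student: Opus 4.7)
The plan is to follow the De Giorgi--Nash--Moser scheme adapted to the Monge--Amp\`ere geometry of sections, reducing the Harnack inequality to two separate ingredients: a local boundedness estimate for nonnegative subsolutions
\[
\sup_{S_\varphi(x_0,t)} u \le C\Bigl(\bigl(|S_\varphi(x_0,2t)|^{-1}\textstyle\int_{S_\varphi(x_0,2t)} u^p\bigr)^{1/p} + Kt^\gamma\Bigr),
\]
and a weak Harnack inequality for nonnegative supersolutions
\[
\bigl(|S_\varphi(x_0,2t)|^{-1}\textstyle\int_{S_\varphi(x_0,2t)} u^{p_0}\bigr)^{1/p_0} \le C\bigl(\inf_{S_\varphi(x_0,t)} u + Kt^\gamma\bigr),
\]
where $K=\norm{\ff}_{L^\infty}+\norm{f}_{L^r}$ and $p_0>0$ is small. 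Combining the two on nested sections, together with the engulfing and doubling properties of Caffarelli--Guti\'errez, yields \cref{thm:harnack}; the exponent $\gamma$ will emerge from the accumulated Sobolev gain in the iteration.

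For the local boundedness, I would run a Moser iteration in the section geometry. Fix $\beta\ge 1$ and a cutoff $\eta$ supported on a slightly larger section than the target; testing (\ref{eqn}) against $\eta^2 v^{2\beta-1}$ with $v=u+Kt^\gamma$, and setting $w=v^\beta$, the principal term becomes (up to a factor $\beta^{-2}$)
\[
\int \eta^2\,\Phi^{ij} D_i w\,D_j w\,dx.
\]
The Monge--Amp\`ere Sobolev inequality of \cite{Le24} upgrades $\int(\eta w)^2\det D^2\varphi\,dx$ to an $L^{2\sigma}$-norm for some $\sigma=\sigma(n,\lambda,\Lambda)>1$, which drives the iteration over $\beta_k\to\infty$ and nested sections. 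After integration by parts, the drift $-\dv(u\bbb)$ contributes a piece of the form $-\tfrac{2\beta-1}{2\beta}\int (\dv\bbb)\eta^2 v^{2\beta}$, which is a \emph{favorable} nonnegative contribution by the hypothesis $\dv\bbb\le 0$; the remaining gradient pieces coming from $\bb\cdot Du$, $\bbb\cdot Du$, and $\dv\ff$ are controlled by Cauchy--Schwarz in the $\Phi$-metric,
\[
|\mathbf{v}\cdot Du|\le \ee\,\Phi^{ij}D_iu\,D_ju + \ee^{-1}\,\Phi^{-1}_{ij}v^iv^j,
\]
producing weights of the form $\Phi^{-1}_{ij}\bbb^i\bbb^j$ and $\Phi^{-1}_{ij}\bb^i\bb^j$. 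Since $\Phi^{-1}=(\det D^2\varphi)^{-1}D^2\varphi$ and $\det D^2\varphi\ge\lambda$, these weights are pointwise bounded by $C|D^2\varphi|$ times $\norm{\bbb}_{L^\infty}^2$ and $\norm{\bb}_{L^\infty}^2$, so the interior $W^{2,1+\ee^*}$ estimate of \cref{intw2pest} supplies exactly the integrability needed. The condition $1+\ee^*>n/2$ in case (ii) is the sharp threshold that makes these weighted terms \emph{subcritical} relative to the Sobolev gain $\sigma$, allowing their absorption into the left-hand side; in case $n=2$, the cofactor matrix $\Phi$ is already bounded by $\Lambda$ and no separate hypothesis is needed.

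For the weak Harnack inequality I would run the analogous iteration with $\beta<0$ on $v^{2\beta-1}$, obtaining $\bigl(\int v^{-p_0}\bigr)^{-1/p_0}\gtrsim \inf v$ for some small $p_0>0$; the sign $\dv\bbb\le 0$ again yields a favorable contribution and the $\Phi$-metric Cauchy--Schwarz handles $\bb\cdot Du$ and $\bbb\cdot Du$ as before. To pass from small negative exponents to a small positive exponent I would apply a John--Nirenberg-type lemma to $\log v$; the requisite BMO bound with respect to sections follows from a Caccioppoli-type inequality obtained by testing (\ref{eqn}) against $\eta^2 v^{-1}$, and John--Nirenberg in the Monge--Amp\`ere quasi-metric is available through the engulfing and doubling properties of sections. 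The principal difficulty is not the iteration itself but the careful bookkeeping of the new drift contributions at every Moser step so that they remain subcritical relative to the Sobolev gain; the hypothesis $1+\ee^*>n/2$ is exactly what makes this possible in dimensions $n\ge 3$ and is the sole reason an integrability assumption on $D^2\varphi$ is needed above dimension two.
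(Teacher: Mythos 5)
Your plan is the classical Moser scheme (local boundedness plus weak Harnack via negative exponents and John--Nirenberg), but this is not what the paper does, and more importantly it runs into a concrete obstruction that the paper's route is specifically designed to avoid. The paper decomposes $u=u_0+v$ in $S_\varphi(x_0,h)$, where $u_0$ solves the inhomogeneous equation with zero boundary data and $v=u-u_0$ solves the homogeneous equation with $v\geq 0$ (by the maximum principle, using $\dv\bbb\leq 0$). The homogeneous part $v$ is handled by citing the already-established Harnack inequality for linearized Monge--Amp\`ere with drift (Theorem \ref{harnackref}, from Le \cite{Le18}); the inhomogeneous part $u_0$ is handled by a \emph{global} $L^\infty$ estimate on sections with zero boundary data (Proposition \ref{lem:rescale}, built from Lemmas \ref{lemma1}--\ref{lemma3-1}). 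No interior Moser iteration and no John--Nirenberg argument appear anywhere in the proof of Theorem \ref{thm:harnack}.

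The obstruction to your approach is the cutoff term. Any interior Moser iteration produces a term of the form $\int \overline{u}^{\beta+2}\,\Phi D\eta\cdot D\eta\,dx$, and the only pointwise upper bound available on $\Phi$ is $\Phi\leq(\Delta\varphi)^{n-1}I_n$. To absorb this term via H\"older against the Sobolev gain you need $(\Delta\varphi)^{n-1}\in L^q$ with $q>n/2$, i.e.\ $D^2\varphi\in L^{q(n-1)}$, which forces $1+\ee^*>n(n-1)/2$ in dimensions $n\geq 3$ --- exactly the \emph{stronger} hypothesis of Theorem \ref{mainthm} and Lemmas \ref{lemma4}--\ref{lemma5}, not the hypothesis $1+\ee^*>n/2$ of Theorem \ref{thm:harnack}. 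The paper sidesteps this by running Moser iteration globally with zero boundary data (Lemma \ref{lemma1}), where no cutoff is needed and only the \emph{lower} bound $\Phi\geq(\lambda/\Delta\varphi)I_n$ from (\ref{phiineq}) enters, so only $\Delta\varphi\in L^q$ is required. Your proposal never mentions the cutoff term and implicitly treats it as harmless. Related to this, your claim that in $n=2$ ``the cofactor matrix $\Phi$ is already bounded by $\Lambda$'' is false: in $n=2$ the cofactor of $D^2\varphi$ has the same eigenvalues as $D^2\varphi$ (merely swapped), so only $\det\Phi$ is bounded by $\Lambda$, not $\Phi$ itself. What saves $n=2$ is that $q(n-1)=q$ and $1+\ee^*>1=n/2$ always holds, so $\Phi\in L^q$ for some $q>1$ is automatic --- a more delicate statement than boundedness. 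Finally, the weak Harnack plus John--Nirenberg half of your plan is a substantial program in its own right (essentially Maldonado's \cite{M19}); it is not a routine adaptation, and the hypotheses you need to make it close under $1+\ee^*>n/2$ would again be undermined by the cutoff issue above.
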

We will prove Theorem \ref{thm:harnack} in Section 4.

From the Harnack inequality, we have the following interior H\"older estimates.
\begin{corollary}[H\"older estimates with $L^\infty$ norms]\label{linftyhoelder}
    Let $\varphi\in C^3(\Omega)$ be a convex function satisfying (\ref{maeqn}).
	Assume that $\ff,\bbb,\bb \in L_{\mathrm{loc}}^\infty(\Omega;\mathbb{R}^n) 
	\cap W_{\mathrm{loc}}^{1,n}(\Omega;\mathbb{R}^n)$,
	$f\in L_\mathrm{loc}^n(\Omega)$,
	$\dv\bbb\leq0$, $n/2<r\leq n$,
	and $S_\varphi(x_0, 4h_0)\Subset\Omega$.
	Let $u\in W_{\mathrm{loc}}^{2,n}(S_\varphi(x_0, 4h_0))$
	be a solution to (\ref{eqn}) in $S_\varphi(x_0, 4h_0)$.
	Further assume that
	\begin{enumerate}
		\item either $n= 2$, or 
		\item $n\geq 3$ and $\ee^*(n,\lambda,\Lambda) + 1 > \frac{n}{2}$,
		where $\ee^*$ is the exponent in the interior $W^{2,1+\ee}$ estimate
		for the Monge-Amp\`ere equation in Theorem \ref{intw2pest}.
	\end{enumerate}
	Then, there are positive constants $C$ and $\gamma$ such that
	for all $x,y\in S_\varphi(x_0, h_0)$, we have
	\begin{equation}
	\begin{aligned}
		|u(x)-u(y)|\leq C
		\left(\norm{\ff}_{L^\infty(S_\varphi(x_0, 2h_0))}+\norm{f}_{L^r(S_\varphi(x_0, 2h_0))}
			+\norm{u}_{L^\infty(S_\varphi(x_0,h_0))} \right) |x-y|^\gamma
		\text{.}
	\end{aligned}
	\end{equation}
	Here $\gamma$ depends on $n$, $\lambda$, $\Lambda$, $\ee^*$,
	$\norm{(\bb,\bbb)}_{L^\infty(S_\varphi(x_0, 2h_0))}$,
	$\norm{\dv\bbb}_{L^n(S_\varphi(x_0, 2h_0))}$,
	$\diam(S_\varphi(x_0, 4h_0))$, and $h_0$,
	and $C$ depends on 
	$\norm{(\bb,\bbb)}_{L^\infty(S_\varphi(x_0, 2h_0))}$, 
	$\norm{\dv\bbb}_{L^n(S_\varphi(x_0, 2h_0))}$,
	$\mathrm{diam}(S_\varphi(x_0,4h_0))$,
	$n$, $\lambda$, $\Lambda$, $r$, $\ee^*$, 
	and $h_0$.
\end{corollary}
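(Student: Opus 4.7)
The plan is to derive Corollary \ref{linftyhoelder} from Theorem \ref{thm:harnack} by the classical oscillation-decay iteration on Monge-Amp\`ere sections, modified to accommodate the drift terms. The first step is to rewrite $M - u$ and $u - m$ as nonnegative solutions of the same divergence-form equation. Since $c\,\dv\bbb = \dv(c\bbb)$ for any constant $c$, a direct computation shows that $u - c$ satisfies
\begin{equation*}
-\dv(\Phi D(u - c) + (u - c)\bbb) + \bb \cdot D(u - c) = (f + c\, \dv \bbb) - \dv \ff
\text{.}
\end{equation*}
Fixing a section $S_\varphi(y, h)$ with $S_\varphi(y, 2h) \Subset S_\varphi(x_0, 4h_0)$, and setting $M = \sup_{S_\varphi(y,h)} u$, $m = \inf_{S_\varphi(y,h)} u$, both $M - u$ and $u - m$ are nonnegative in $S_\varphi(y, h)$ and solve such equations with the same $\|\ff\|_{L^\infty}$ and with a new source $\tilde f$ whose norm obeys $\|\tilde f\|_{L^r} \leq \|f\|_{L^r} + C\|u\|_{L^\infty(S_\varphi(x_0, h_0))} \|\dv \bbb\|_{L^n}$, using $r \leq n$ and the boundedness of the section.

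Applying Theorem \ref{thm:harnack} to both and adding the two resulting inequalities yields the section-oscillation decay
\begin{equation*}
\osc_{S_\varphi(y, t)} u \leq \theta\, \osc_{S_\varphi(y, h)} u + C_1 \mathcal{E}\, t^\gamma, \qquad t \leq h/2,
\end{equation*}
where $\theta \in (0,1)$ and $\gamma > 0$ come from Theorem \ref{thm:harnack}, and $\mathcal{E} := \|\ff\|_{L^\infty} + \|f\|_{L^r} + \|u\|_{L^\infty(S_\varphi(x_0, h_0))}\|\dv\bbb\|_{L^n}$. The engulfing and volume-doubling properties of Monge-Amp\`ere sections of $\varphi$ satisfying (\ref{maeqn}) let us choose a single $h = h_1 > 0$, depending only on the listed parameters, so that this inequality holds uniformly for all $y \in S_\varphi(x_0, h_0)$. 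Iterating on the geometric sequence $\tau_k = h_1 / 2^k$ produces the power decay $\osc_{S_\varphi(y, \tau)} u \lesssim (\tau / h_1)^\alpha\bigl(\osc_{S_\varphi(y, h_1)} u + \mathcal{E}\, h_1^\gamma\bigr)$ for some $\alpha \in (0, \gamma)$ and all $\tau \in (0, h_1]$; the $\osc_{S_\varphi(y, h_1)} u$ factor is then absorbed into $\|u\|_{L^\infty}$.

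To convert section-scale decay into Euclidean H\"older continuity, I would invoke the inclusion $B_{c_0 \tau^{1/\beta}}(y) \subset S_\varphi(y, \tau)$, valid for small $\tau > 0$ and $y \in S_\varphi(x_0, 2h_0)$, as a consequence of Caffarelli's strict convexity and $C^{1,\alpha}$ regularity of $\varphi$ under (\ref{maeqn}) (see \cite{Le24}). Given $x, y \in S_\varphi(x_0, h_0)$ with $|x-y|$ small, the choice $\tau \sim |x-y|^\beta$ places $x$ in $S_\varphi(y, \tau)$, so the oscillation decay yields $|u(x) - u(y)| \leq \osc_{S_\varphi(y, \tau)} u \lesssim |x-y|^{\alpha/\beta}$, which is the claimed estimate with H\"older exponent $\gamma$ relabeled. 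The main technical point is keeping constants uniform in $y$ during the iteration, which relies on the section theory of \cite{CG97, Le24}; the novelty forced by the drift terms is confined to the first step, where the shifts $\pm m$, $\pm M$ are harmlessly absorbed into the source at the price of the extra term $\|u\|_{L^\infty}\|\dv\bbb\|_{L^n}$.
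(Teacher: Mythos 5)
Your proposal is correct in substance, but it takes a genuinely different route from the paper, and the difference is worth highlighting. The paper first decomposes $u=v+w$ on $S_\varphi(x_0,h)$, where $v$ has zero boundary data and carries the full source $f-\dv\ff$, and $w$ is the homogeneous ($f=\ff=0$) solution with boundary data $u$; existence of this pair is obtained from \cite[Theorem 9.15]{GT}. The paper then controls $\osc v$ by invoking the global $L^\infty$ estimate of Proposition \ref{lem:rescale} directly, and controls $\osc w$ by applying Theorem \ref{thm:harnack} to $w_1=w-m(h)$ and $w_2=M(h)-w$; because $w\ne u$ in the interior, it needs an additional maximum-principle argument on the nondivergence form (using $\dv\bbb\le 0$) to conclude $|M(h)|,|m(h)|\le\norm{u}_{L^\infty}$. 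Your route skips the decomposition entirely: you observe that $u-c$ solves (\ref{eqn}) with $f$ replaced by $f+c\,\dv\bbb$, so you can apply Theorem \ref{thm:harnack} directly to $M-u$ and $u-m$ with $M,m$ the extrema of $u$ itself over $S_\varphi(y,h)$. This is cleaner on several counts: the bounds $|M|,|m|\le\norm{u}_{L^\infty}$ are immediate, you do not need the auxiliary existence theorem, and Proposition \ref{lem:rescale} is not used directly (only implicitly, through the proof of Theorem \ref{thm:harnack}); the contributions $\norm{\ff}_{L^\infty}+\norm{f}_{L^r}$ enter through the error term $\mathcal{E}t^\gamma$ of the Harnack inequality rather than through a separate estimate for $v$. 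Adding the two Harnack inequalities then yields the same oscillation-decay inequality with $\theta=(C-1)/(C+1)$ as in the paper, and the subsequent iteration and the conversion from sections to Euclidean balls via Corollary \ref{intc1asec} are standard, exactly as you indicate (your displayed exponent $\alpha/\beta$ should read $\alpha\beta$, a harmless slip). One point you wave at but should make explicit in a full write-up: the constant in Theorem \ref{thm:harnack} as stated depends on $h$ and on $\diam(S_\varphi(x_0,2h))$, so uniformity of $\theta$ and $C_1$ along the iterated scales $\tau_k$ and for all $y\in S_\varphi(x_0,h_0)$ must be justified via the engulfing/volume properties of sections; the paper's own proof carries the same burden, stating its decay constant $C_2$ in terms of $h_0$ rather than $h$, so this is not a gap specific to your argument.
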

We will prove Corollary \ref{linftyhoelder} in Section 6.

With stronger assumptions on the integrability of the Hessian matrix $D^2\varphi$
in higher dimensions,
we can obtain the following interior H\"older estimate, 
where the $L^\infty$ norm of the solution $u$ in Corollary \ref{linftyhoelder}
is replaced by its $L^2$ norm.
\begin{theorem}[H\"older estimates with $L^2$ norm]\label{mainthm}
    Let $\varphi\in C^3(\Omega)$ be a convex function satisfying (\ref{maeqn}).
	Assume that $\ff,\bbb,\bb \in L_{\mathrm{loc}}^\infty(\Omega;\mathbb{R}^n) 
	\cap W_{\mathrm{loc}}^{1,n}(\Omega;\mathbb{R}^n)$,
	$f\in L_\mathrm{loc}^n(\Omega)$,
	$\dv\bbb\leq0$, $n/2<r\leq n$,
	and $S_\varphi(x_0, 4h_0)\Subset\Omega$.
	Let $u\in W_{\mathrm{loc}}^{2,n}(S_\varphi(x_0, 4h_0))$
	be a solution to (\ref{eqn}) in $S_\varphi(x_0, 4h_0)$.
	Further assume that
	\begin{enumerate}
		\item either $n= 2$, or 
		\item $n\geq 3$ and $\ee^*(n,\lambda,\Lambda) + 1 > \frac{n(n-1)}{2}$,
		where $\ee^*$ is the exponent in the interior $W^{2,1+\ee}$ estimate
		for the Monge-Amp\`ere equation in Theorem \ref{intw2pest}.
	\end{enumerate}
	Then, there are positive constants $C$ and $\gamma$,
	where $\gamma$ depends on $n$, $\lambda$, $\Lambda$, $\ee^*$,
	$\mathrm{diam}(S_\varphi(x_0,4h_0))$,
	$\norm{(\bb,\bbb)}_{L^\infty(S_\varphi(x_0, 2h_0))}$, 
	and $h_0$,
	and $C$ depends on $n$, $\lambda$, $\Lambda$, $r$, $\ee^*$, 
	$\norm{(\bb,\bbb)}_{L^\infty(S_\varphi(x_0, 2h_0))}$,  
	$h_0$, and
	$\mathrm{diam}(S_\varphi(x_0,4h_0))$, such that
	for all $x,y\in S_\varphi(x_0, h_0)$, we have
	\begin{equation}
	\begin{aligned}
		|u(x)-u(y)|\leq C
		\left(\norm{\ff}_{L^\infty(S_\varphi(x_0, 2h_0))}+\norm{f}_{L^r(S_\varphi(x_0, 2h_0))}
			+\norm{u}_{L^2(S_\varphi(x_0,2h_0))} \right) |x-y|^\gamma
		\text{.}
	\end{aligned}
	\end{equation}
\end{theorem}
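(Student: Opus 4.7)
The plan is to reduce Theorem \ref{mainthm} to Corollary \ref{linftyhoelder} via a local boundedness estimate. Since Corollary \ref{linftyhoelder} already provides H\"older continuity with $\norm{u}_{L^\infty(S_\varphi(x_0,h_0))}$ on the right-hand side, it suffices to prove that under the stronger integrability hypothesis $\ee^*+1>n(n-1)/2$ one has
\begin{equation*}
\norm{u}_{L^\infty(S_\varphi(x_0, h_0))}
\leq C\bigl(\norm{\ff}_{L^\infty(S_\varphi(x_0, 2h_0))} + \norm{f}_{L^r(S_\varphi(x_0, 2h_0))}
	+ \norm{u}_{L^2(S_\varphi(x_0, 2h_0))}\bigr),
\end{equation*}
and then substitute this into Corollary \ref{linftyhoelder} applied on a slightly smaller section to obtain the conclusion.

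To establish this $L^\infty$-$L^2$ bound I would carry out a Moser iteration adapted to the affine-invariant Monge-Amp\`ere geometry of sections. Setting $v=|u|+k$ with $k:=\norm{\ff}_{L^\infty}+\norm{f}_{L^r}$ and testing (\ref{eqn}) against $\eta^2 v^{2q-1}$ for $q\geq 1$, where $\eta$ is a smooth cutoff between two nested sections $S_\varphi(x_0,t_1)\Subset S_\varphi(x_0,t_2)$, one obtains after integration by parts (using that $\Phi$ is divergence-free), Cauchy-Schwarz absorption of the drift terms $-\dv(u\bbb)+\bb\cdot Du$ using $\bb,\bbb\in L^\infty$, and the sign condition $\dv\bbb\leq 0$, an energy inequality of the schematic form
\begin{equation*}
\int_{S_\varphi(x_0,t_2)}\eta^2\,\Phi Dv^q\cdot Dv^q\,dx
\leq Cq^2\int_{S_\varphi(x_0,t_2)}(|D\eta|^2+\eta^2)\,v^{2q}\,dx.
\end{equation*}
The assumption $\dv\bbb\leq 0$ is essential because rewriting (\ref{eqn}) in nondivergence form produces a zeroth-order term $-(\dv\bbb)u$ whose sign must be controlled.

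Next, one applies a Monge-Amp\`ere Sobolev inequality on sections. Under the hypothesis $\ee^*+1>n(n-1)/2$, Theorem \ref{intw2pest} yields $D^2\varphi\in L^{n(n-1)/2+\delta}_{\mathrm{loc}}$ for some $\delta>0$, which is precisely the threshold (see \cite[Chapters 14--15]{Le24}) under which sections admit a gain-in-integrability Sobolev inequality
\begin{equation*}
\left(\int_{S_\varphi(x_0,t)}|w|^{2\chi}\,dx\right)^{1/\chi}
\leq C\int_{S_\varphi(x_0,t)}\Phi Dw\cdot Dw\,dx
\end{equation*}
for $w$ compactly supported in $S_\varphi(x_0,t)$, with some $\chi=\chi(n,\lambda,\Lambda,\ee^*)>1$. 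Inserting $w=\eta v^q$ into this inequality and combining with the energy estimate closes the iteration: running it along a geometric sequence of sections shrinking from $S_\varphi(x_0,2h_0)$ down to $S_\varphi(x_0,h_0)$ with $q$ replaced by $\chi^j$ at step $j$, and summing the resulting series, produces the required $L^\infty$-$L^2$ bound.

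The main obstacle is the nondivergence drift $\bb\cdot Du$ inside the iteration. Unlike $-\dv(u\bbb)$, it cannot be removed by integration by parts and must be absorbed into the principal quadratic form $\int\eta^2\,\Phi Dv\cdot Dv\,v^{2q-2}$ via a weighted Cauchy-Schwarz, which introduces a $q$-dependent factor that has to be tracked carefully so the constants accumulated over the iteration grow only polynomially in $\chi^j$. A secondary difficulty is that $\chi>1$ can be only marginally larger than one, so a standard De~Giorgi-Moser summation argument is needed to ensure that the iteration converges; however, once the energy estimate and Monge-Amp\`ere Sobolev inequality are in place, the remaining bookkeeping is routine.
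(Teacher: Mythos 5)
Your proposal follows essentially the same route as the paper: reduce Theorem~\ref{mainthm} to Corollary~\ref{linftyhoelder} by proving a local $L^\infty$--$L^2$ bound via Moser iteration with section-adapted cutoffs and the Monge--Amp\`ere Sobolev inequality, then substitute. This is precisely the content of the paper's Lemma~\ref{lemma4} (iteration in a normalized section) and Lemma~\ref{lemma5} (rescaling to a general section and lowering the exponent from $q^*$ to $2$), followed by the short combination in Section~6, where the paper applies Lemma~\ref{lemma5} to $u^+$ and $u^-$ separately instead of iterating with $|u|+k$.

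Two technical points in your justification are off, though they do not create a gap since the needed hypotheses are assumed anyway. First, the sign condition $\dv\bbb\leq 0$ is \emph{not} needed for the interior boundedness step; the paper's Lemma~\ref{lemma4} does not assume it. In divergence form both drift terms $-\dv(u\bbb)$ and $\bb\cdot Du$ are absorbed by Cauchy--Schwarz with $\Phi$ as the weight, using only $\bb,\bbb\in L^\infty$ and (\ref{phiineq}); one never passes to the nondivergence form in this step, so the zeroth-order term $-(\dv\bbb)u$ never appears. The sign condition enters only through the Harnack inequality and the oscillation/maximum-principle argument inside the proof of Corollary~\ref{linftyhoelder}. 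Second, the stronger integrability hypothesis $1+\ee^*>n(n-1)/2$ is not what makes the Monge--Amp\`ere Sobolev inequality (Theorem~\ref{ma-sobolev}) work; that inequality holds unconditionally under (\ref{maeqn}). The extra integrability is needed because the interior estimate, unlike the global one, requires a cutoff $\eta$, and the resulting term $\int \overline{u}^{\beta+2}\,\Phi D\eta\cdot D\eta\,dx$ must be bounded using $\Phi\leq(\Delta\varphi)^{n-1}I_n$ and then H\"older, which forces $(\Delta\varphi)^{n-1}\in L^q$ for some $q>n/2$, i.e.\ $\Delta\varphi\in L^{1+\ee^*}$ with $1+\ee^*>n(n-1)/2$; this is exactly where the exponent $n(n-1)/2$ arises in the paper (see (\ref{5-ide3-1}) and (\ref{5-ide8-3})). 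Once these attributions are corrected, your outline matches the paper's argument.
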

We will prove Theorem \ref{mainthm} in Section 6.

\begin{remark}
In Theorem \ref{mainthm},
we use the $L^2$ norm of the solution $u$ in the estimate
(in fact, any $L^p$ norm for $p>0$ can be used);
in Corollary \ref{linftyhoelder}, the $L^\infty$ norm of $u$ is used in the estimate.
The improvement in Theorem \ref{mainthm} comes at the cost of 
having to assume stronger integrability of $D^2\varphi$ when $n\geq 3$, namely, 
$1+\ee^* > \frac{n(n-1)}{2}$.
This is because we need this condition in the proof of the interior estimate in Lemma \ref{lemma4}.
It would be interesting to see if the condition $1+\ee^*>\frac{n(n-1)}{2}$ 
can be relaxed in Theorem \ref{mainthm}.
\end{remark}

\begin{remark} 
Note that, by Caffarelli \cite{Cafw2p} (also see \cite[Theorem 6.13]{Le24}),
for any $p>1$ and any convex function $\varphi$ satisfying (1.3), 
we have $D^2\varphi \in L_{\mathrm{loc}}^p(\Omega)$, 
provided that $\Lambda/\lambda-1\leq e^{-C(n)p}$ for some large constant $C(n)>1$.
\end{remark}

\begin{remark}
In our theorems,
we require $\varphi$ to be $C^3$ in the domain.
However, our estimates do not depend on the regularity of $\varphi$
but only on the constants $\lambda$, $\Lambda$, and $n$.
The functions $\ff$, $\bb$, $\bbb$ are assumed to be in 
$L_{\mathrm{loc}}^\infty(\Omega;\mathbb{R}^n) 
\cap W_{\mathrm{loc}}^{1,n}(\Omega;\mathbb{R}^n)$ 
and $f$ to be in $L_\mathrm{loc}^n(\Omega)$,
but the estimates depend only on the quantities stated.
\end{remark}

\subsection{Related Results for Equations in Divergence Form}
Divergence form equations
\begin{equation}\label{geneqn}
\begin{aligned}
	-\dv (a Du + u\bbb) + \bb \cdot Du + cu = f - \dv\ff
	\text{\quad in }\Omega\subset \mathbb{R}^n
\end{aligned}
\end{equation}
have been studied in the case when the symmetric coefficient matrix $a=a(x)$ is not uniformly elliptic,
but instead satisfies
\begin{equation*}
\begin{aligned}
	\rho (x) I_n \leq a(x) \leq \mu (x) I_n
\end{aligned}
\end{equation*}
for nonnegative functions $\rho$ and $\mu$, where $I_n$ is the $n\times n$ identity matrix.
Murty-Stampacchia \cite{MS} and Trudinger \cite{Trudinger1973} 
proved $L^\infty$ and H\"older estimates for solutions to equations of the form (\ref{geneqn}) 
with integrability assumptions on $\mu$ and $\rho^{-1}$.
Specifically, it is assumed that $\mu\in L^p$ and $\rho^{-1}\in L^q$, 
where $\frac{1}{p} + \frac{1}{q}<\frac{2}{n}$.
These extend the classical results of De Giorgi \cite{DeGiorgi}, Nash \cite{Nash}, and Moser \cite{Moser} 
for uniformly elliptic equations,
when $\rho$ and $\mu$ are positive constants.

Bella-Sch\"affner \cite{B-S} extended the above results in the case of equations of the form
\begin{equation*}
\begin{aligned}
	-\dv (a Du) = 0
\end{aligned}
\end{equation*}
in $\Omega\subset\mathbb{R}^n$, under the assumption that $\frac{1}{p} + \frac{1}{q}<\frac{2}{n-1}$.
This result is essentially sharp,
as Franchi-Serapioni-Serra Cassano \cite[Theorem 2]{FSS} proved that a 
counterexample exists if $n\geq 4$ and $\frac{1}{p}+\frac{1}{q}>\frac{2}{n-1}$.

In the case when the matrix $a=\Phi$ is the cofactor matrix of the Hessian matrix
$D^2\varphi$, where $\varphi$ satisfies (\ref{maeqn}), we have
\begin{equation*}
\begin{aligned}
	a= (\det D^2\varphi) (D^2\varphi)^{-1}\geq 
	\frac{\det D^2\varphi}{\norm{D^2\varphi}} I_n
	\text{.}
\end{aligned}
\end{equation*}
As $\det D^2\varphi\geq \lambda$ and $D^2\varphi\in L^{1+\ee^*}$
by the $W^{2,1+\ee}$ estimate for Monge-Amp\`ere equations (see Theorem \ref{intw2pest}),
$\rho^{-1}\in L^{1+\ee^*}$.
Furthermore, $\rho^{n-1}\mu$ is bounded by (\ref{maeqn}), and thus $\mu\in L^{(1+\ee^*)/(n-1)}$.
Therefore, we get
\begin{equation*}
\begin{aligned}
	\frac{1}{p}+\frac{1}{q} = \frac{1}{1+\ee^*}+\frac{n-1}{1+\ee^*}
	=\frac{n}{1+\ee^*}
	\text{.}
\end{aligned}
\end{equation*}
Note that, with only the assumption that $\ee^*>0$,
this is smaller than $\frac{2}{n-1}$ only when $n=2$.
When $n\geq 3$, the assumption $1+\ee^*> n(n-1)/2$ in Theorem \ref{mainthm}
and in Le \cite{Le24} corresponds to $\frac{n}{1+\ee^*}<\frac{2}{n-1}$.
Compared to the results of Bella-Sch\"affner,
these cover the equations with nonzero right-hand side (especially the case when $\ff\neq 0$),
with the assumption that the matrix $a=(\det D^2\varphi)(D^2\varphi)^{-1}$. 

\subsection{Methods of the proofs}
We briefly discuss the differences in the proofs of the results in this paper,
the results of Le \cite{SG_LMA, Le24}, and the results of Wang \cite{W}.

The proof of interior H\"older estimates in Le \cite{SG_LMA, Le24}
used the fine properties of the Green's function for the linearized
Monge-Amp\`ere operator \cite{LMA_Green, Boundary_Harnack}.
Other tools used in the proof are
De Philippis-Figalli-Savin and Schmidt's $W^{2,1+\ee}$ estimate \cite{DPFS, Schmidt}
in the case $n=2$, and the Monge-Amp\`ere Sobolev inequality.
The $W^{2,1+\ee}$ estimate is replaced by an integrability assumption
for $D^2\varphi$ when $n\geq 3$.
The results for the Green's function for the linearized Monge-Amp\`ere
operator with drift terms are not available, 
so we take an alternative approach in our proofs.

Wang \cite{W} uses the De Giorgi iteration technique,
in addition to the Monge-Amp\`ere Sobolev inequality,
in the proof of interior H\"older estimates.
We will use the Moser iteration techniques similar to the ones in 
Gilbarg-Trudinger \cite[Chapter 8]{GT} and Trudinger \cite{Trudinger1973},
and the Monge-Amp\`ere Sobolev inequality in our proofs.

The rest of this paper is organized as follows.
In Section 2, we present definitions and prior results used in the proofs of the results.
In Section 3, we establish global $L^\infty$ estimates for solutions to (\ref{eqn}).
In Section 4, we prove the interior Harnack inequality in Theorem \ref{thm:harnack}.
In Section 5,
we establish interior estimates for solutions to (\ref{eqn}).
Finally, in Section 6, 
we prove the H\"older
estimates in Corollary \ref{linftyhoelder} and Theorem \ref{mainthm}.

\section{Preliminaries}
In this section, we introduce some notations, definitions, 
and background results on the Monge-Amp\`ere equations and 
the linearized Monge-Amp\`ere equations
that will be used in this paper.

\subsection*{Notation}
We will use the following notations throughout the paper.
\begin{itemize}
\item $B_r(x):=\{y\in\mathbb{R}^n:|y-x|<r\}$,
\item $B_r:=B_r(0)$,
\item $u^{\pm}:=\max\{\pm u,0\}$,
\item $I_n:=n\times n$ identity matrix.
\item $\diam(E):=$ diameter of a set $E$.
\item $|\Omega|:=$ the Lebesgue measure of a Lebesgue measurable set $\Omega\subset\mathbb{R}^n$.
\end{itemize}

Unless otherwise stated,
our convex domains are assumed to have nonempty interior.

\begin{definition}[Sections]\label{def:sec} 
	Let $\varphi$ be a $C^1$ convex function in $\overline{\Omega}$.
	Then the \emph{section} of $\varphi$ centered at $x\in\overline{\Omega}$ with height $h>0$ is defined as
	\begin{equation*}
	\begin{aligned}
		S_\varphi(x,h)=\{y\in\overline{\Omega} :
		\varphi(y) < \varphi(x) + D\varphi(x)\cdot (y-x) + h \}
		\text{.}
	\end{aligned}
	\end{equation*}
\end{definition}

\begin{theorem}[John's lemma \cite{John}]\label{john}
	Let $\Omega\subset\mathbb{R}^n$ be a nonempty bounded convex domain.
	Then, there is an affine transformation $T:\mathbb{R}^n\rightarrow\mathbb{R}^n$ such that
	$B_1 \subset T^{-1}\Omega \subset B_n$.
\end{theorem}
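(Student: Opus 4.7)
The plan is to prove John's lemma by first establishing the existence of a maximum-volume ellipsoid contained in $\Omega$ (the John ellipsoid), then using an affine transformation to normalize it to the unit ball $B_1$, and finally arguing by contradiction that the resulting set cannot extend beyond $B_n$.

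First, I would observe that the set of closed ellipsoids contained in $\overline{\Omega}$ is nonempty (since $\Omega$ has nonempty interior) and, when parametrized by center and positive-definite shape matrix, forms a compact family once we bound away from degeneracy using the fact that $\Omega$ is bounded. Since volume is a continuous function of this parametrization, there exists an ellipsoid $E\subset\overline{\Omega}$ of maximum volume. Choose an affine transformation $T:\mathbb{R}^n\to\mathbb{R}^n$ mapping $E$ to $B_1$; then $B_1\subset T^{-1}\Omega$, and $B_1$ is the maximum-volume ellipsoid in $T^{-1}\Omega$ (affine transformations preserve the volume-maximizing property up to a constant Jacobian factor). Replacing $\Omega$ by $T^{-1}\Omega$, it suffices to show $T^{-1}\Omega\subset B_n$.

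Suppose for contradiction that some $p\in T^{-1}\Omega$ satisfies $|p|>n$. By rotation we may assume $p=(R,0,\ldots,0)$ with $R>n$. By convexity, the ``ice-cream cone'' $K:=\mathrm{conv}(B_1\cup\{p\})$ is contained in $T^{-1}\Omega$. The strategy is to exhibit an ellipsoid $E'\subset K$ with $|E'|>|B_1|$, contradicting maximality. I would take $E'$ of the form
\begin{equation*}
E' = \left\{ x\in\mathbb{R}^n : \frac{(x_1-c)^2}{a^2} + \frac{x_2^2+\cdots+x_n^2}{b^2} \leq 1 \right\},
\end{equation*}
centered on the $x_1$-axis, with semi-axis $a>1$ stretched toward $p$ and perpendicular semi-axis $b<1$ contracted. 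The containment $E'\subset K$ imposes two constraints: the ``left cap'' $\{x_1\leq 0\}\cap E'$ must lie in $B_1$, and the ``right cone'' portion must stay inside the tangent cone from $p$ to $B_1$. A short calculation translates these into algebraic inequalities relating $a,b,c,R$, and the goal is to choose $a=1+\eta$, $b=1-\delta$, $c=\eta$ with $\eta$ small and $\delta$ determined by the cone constraint, so that $|E'|/|B_1|=a\,b^{n-1}=(1+\eta)(1-\delta)^{n-1}>1$ to first order in $\eta$, precisely when $R>n$ (the factor $n$ arises from $b^{n-1}$ against the cone slope).

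The existence step and the reduction by affine transformation are routine; the main obstacle, and the place where the specific constant $n$ enters, is the quantitative geometric construction of $E'$: one must set up the cone-tangency inequality carefully and verify that the first-order volume gain $\eta-(n-1)\delta>0$ is achievable exactly when $R>n$. This is the heart of the lemma and the reason the bound $B_n$ is sharp (equality essentially being realized by a simplex with $B_1$ as its insphere).
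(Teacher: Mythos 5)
The paper states John's lemma as a cited classical result (reference \cite{John}) and does not prove it, so there is no in-paper proof to compare against. Your sketch is the standard modern argument via the maximal-volume inscribed ellipsoid (the ``John ellipsoid'') and is correct in outline. The existence step via compactness and the affine reduction to $E=B_1$ are routine, as you say. The contradiction step is also set up correctly, and your claimed first-order analysis does check out: with $p=(R,0,\dots,0)$, the tangent cone from $p$ to $B_1$ touches at $x_1=1/R$, and linearizing the containment inequality
\begin{equation*}
b^2\Bigl(1-\frac{(x_1-c)^2}{a^2}\Bigr)\leq\frac{(R-x_1)^2}{R^2-1}
\end{equation*}
at that tangency circle, with $a=1+\eta$, $b=1-\delta$, $c=\eta$, yields the binding constraint $\delta\geq\eta/(R-1)$. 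Taking $\delta=\eta/(R-1)$ gives the volume ratio
\begin{equation*}
ab^{n-1}\approx 1+\eta-(n-1)\delta = 1+\eta\,\frac{R-n}{R-1},
\end{equation*}
which exceeds $1$ precisely when $R>n$, confirming your claim and showing where the constant $n$ enters.

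Two small precision notes. First, the ``spherical cap'' portion of $\partial K$ is $\{x_1\leq 1/R\}\cap\partial B_1$, not $\{x_1\leq 0\}$, so the ball-containment check should extend to $x_1=1/R$; in fact, with $\delta=\eta/(R-1)$ that constraint is tight at $x_1=1/R$ and slack for $x_1<1/R$, so nothing breaks, but it is worth stating correctly. Second, a first-order volume gain only shows the desired ellipsoid exists for $\eta$ sufficiently small; a complete proof must either quantify ``sufficiently small'' or carry out the exact (non-linearized) tangency computation. You flag this yourself as the deferred ``heart of the lemma,'' which is a fair self-assessment: the plan is sound and the constant $n$ is correctly located, but the quantitative geometric step still needs to be written out in full.
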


\begin{definition}[Normalized convex sets]
An open convex set $K\subset\mathbb{R}^n$ is called \emph{normalized} if
$B_1\subset K\subset B_n$.
\end{definition}

We will use the following Monge-Amp\`ere Sobolev inequality.
It was proved by Tian-Wang \cite[Theroem 3.1]{masobolev} when $n\geq 3$,
and by Le \cite[Proposition 2.6]{SG_LMA} when $n=2$;
see also \cite[Theorem 14.15]{Le24}.
\begin{theorem}[Monge-Amp\`ere Sobolev inequality]\label{ma-sobolev}
    Let $\varphi$ be a $C^2$ convex function satisfying (\ref{maeqn}),
    and define $\Phi$ as in (\ref{cofdef}).
	Suppose $S_\varphi(x_0, 2h)\Subset\Omega$,
	and $S_\varphi(x_0,h)$ is a normalized section.
	Then for any $u\in C_c^\infty(S_\varphi(x_0,h))$,
	\begin{equation*}
	\begin{aligned}
		\norm{u}_{L^p(S_\varphi(x_0,h))}
		\leq C \left[ \int_{S_\varphi(x_0,h)} \Phi Du \cdot Du \,dx\right]^{1/2}
		\text{,}
	\end{aligned}
	\end{equation*}
	where
	\begin{enumerate}
		\item $p\in (2,\infty)$ and $C = C(p, \lambda, \Lambda)$ if $n = 2$, and
		\item $p = \frac{2n}{n-2}$ and $C = C(n,\lambda,\Lambda)$ if $n \geq 3$.
	\end{enumerate}
\end{theorem}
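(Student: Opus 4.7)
The plan is to derive the Monge-Amp\`ere Sobolev inequality by combining a Monge-Amp\`ere isoperimetric inequality with the coarea formula and a Cavalieri-type truncation argument, mirroring the classical derivation of the Euclidean Sobolev embedding from De Giorgi's isoperimetric inequality.

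Since $S := S_\varphi(x_0,h)$ is already assumed normalized, so that $B_1\subset S\subset B_n$, the ambient geometry is fixed. The first step is to establish a Monge-Amp\`ere isoperimetric inequality of the form
\begin{equation*}
\int_{\partial E} (\Phi \nu_E \cdot \nu_E)^{1/2}\,dH^{n-1} \geq c_0\,|E|^{(n-1)/n}
\end{equation*}
for smooth $E \Subset S$, where $c_0 = c_0(n,\lambda,\Lambda)$. This is the main ingredient, and it follows from the regularity theory for sections of Caffarelli-Guti\'errez combined with an ABP-type comparison tailored to the linearized operator: the cofactor matrix $\Phi$ encodes the intrinsic perimeter in the geometry where the Monge-Amp\`ere measure $\det D^2\varphi\,dx$ is doubling and Ahlfors-regular.

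Once the isoperimetric inequality is in hand, the coarea formula applied to $|u|$ gives
\begin{equation*}
\int_S (\Phi Du\cdot Du)^{1/2}\,dx
= \int_0^\infty\!\!\int_{\{|u|=t\}} (\Phi\nu\cdot\nu)^{1/2}\,dH^{n-1}\,dt
\geq c_0 \int_0^\infty |\{|u|>t\}|^{(n-1)/n}\,dt,
\end{equation*}
and a standard layer-cake/rearrangement argument converts the right-hand side into $c_0' \norm{u}_{L^{n/(n-1)}(S)}$. To upgrade this $L^1$-to-$L^{n/(n-1)}$ inequality to the stated $L^2$-to-$L^{2n/(n-2)}$ inequality when $n\geq 3$, I would apply it to $w = |u|^\alpha \sgn(u)$ with $\alpha = 2(n-1)/(n-2)$, use Cauchy-Schwarz on $|Dw| = \alpha |u|^{\alpha-1}|Du|$ against the weight $\Phi$, and rearrange the resulting self-improving inequality. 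For $n=2$ the critical $\alpha$ diverges, so instead I would iterate the Cavalieri trick with varying exponent (or run a Moser iteration from the $L^1$-Sobolev inequality) to reach any $p\in(2,\infty)$, with a constant that degenerates as $p\to\infty$, matching the $p$-dependence stated in the theorem.

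The hard part is the isoperimetric inequality itself: the weight $(\Phi\nu\cdot\nu)^{1/2}$ is genuinely anisotropic and can both degenerate and blow up depending on the local spectrum of $D^2\varphi$. The saving observation is that, under the map $T = D\varphi$, the weighted perimeter on $\partial E$ should transfer, up to the fixed bounds in (\ref{maeqn}), to the Euclidean perimeter of the image $T(E)$, which is a bounded convex set to which the classical isoperimetric inequality applies; one then pulls the estimate back. Making this transfer quantitative in a way that does \emph{not} require additional integrability of $D^2\varphi$ is delicate when $n\geq 3$ and relies crucially on the Caffarelli-Guti\'errez section geometry, whereas when $n=2$ the cofactor matrix is simply a rotation of $D^2\varphi$, which is why Le's proof in two dimensions works directly from (\ref{maeqn}) alone.
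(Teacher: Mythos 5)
The paper does not prove this theorem: it is quoted as a preliminary, credited to Tian--Wang \cite{masobolev} for $n\geq 3$ and to Le \cite{SG_LMA} for $n=2$. So there is no in-paper proof to compare against; what has to be assessed is whether your outline actually establishes the inequality, and it does not, because the central ingredient is not established and the mechanism you offer for it fails.

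Your plan hinges entirely on the Monge--Amp\`ere isoperimetric inequality
\begin{equation*}
\int_{\partial E}(\Phi\nu_E\cdot\nu_E)^{1/2}\,dH^{n-1}\geq c_0\,|E|^{(n-1)/n}\text{,}
\end{equation*}
and the one concrete idea you give for it --- that under $T=D\varphi$ the weighted perimeter transfers, up to $(\lambda,\Lambda)$-constants, to the Euclidean perimeter of $T(E)$ --- is false. Under a diffeomorphism with symmetric Jacobian $D^2\varphi$, the surface measure on $\partial E$ picks up the factor $(\det D^2\varphi)\,|(D^2\varphi)^{-1}\nu|$, whereas your weight is $(\Phi\nu\cdot\nu)^{1/2}=(\det D^2\varphi)^{1/2}\bigl((D^2\varphi)^{-1}\nu\cdot\nu\bigr)^{1/2}$. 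Writing $\mu_1\leq\cdots\leq\mu_n$ for the eigenvalues of $D^2\varphi$ and taking $\nu$ aligned with the $\mu_1$-eigenvector,
\begin{equation*}
\frac{(\Phi\nu\cdot\nu)^{1/2}}{(\det D^2\varphi)\,|(D^2\varphi)^{-1}\nu|}
=\frac{\mu_1^{-1/2}}{(\prod_i\mu_i)^{1/2}\,\mu_1^{-1}}
=\frac{\mu_1^{1/2}}{(\prod_i\mu_i)^{1/2}}\longrightarrow 0
\end{equation*}
as $\mu_1\to 0$ with $\prod_i\mu_i\in[\lambda,\Lambda]$, which (\ref{maeqn}) permits. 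So the Monge--Amp\`ere perimeter can be much \emph{smaller} than $\mathrm{Per}(T(E))$, and the Euclidean isoperimetric inequality for $T(E)$ cannot be pulled back to give a lower bound. Appealing to ``Caffarelli--Guti\'errez section geometry'' and ``an ABP-type comparison'' names the difficulty without supplying an argument; this is precisely where the actual mathematics of Tian--Wang's and Le's proofs lives, and neither of them argues through an isoperimetric inequality.

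Note also that the $L^1\to L^{n/(n-1)}$ isoperimetric form you posit is genuinely stronger than the $L^2\to L^{2n/(n-2)}$ inequality stated in the theorem (Cauchy--Schwarz turns the former into the latter only with the weaker exponent $n/(n-1)$, and there is no converse), so you are trying to prove more than is needed via the hardest possible intermediate. Your truncation step --- applying the $L^1$ inequality to $|u|^\alpha\sgn u$ with $\alpha=2(n-1)/(n-2)$, Cauchy--Schwarz against $\Phi$, and dividing --- is a correct and standard upgrade, and the Moser-style iteration for $n=2$ with a $p$-dependent constant is likewise fine. But these are the easy steps; they rest on the unestablished isoperimetric inequality, and the transfer argument you sketch cannot supply it.
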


\begin{theorem}[Caffarelli's interior 
	$C^{1,\alpha}$ estimate \cite{CaffarelliC1a}]\label{intc1a}
	Let $\varphi$ be a strictly convex solution to the Monge-Amp\`ere equation
	$\det D^2\varphi = f$ in a convex domain $\Omega\subset\mathbb{R}^n$,
	where $\lambda\leq f\leq\Lambda$ for positive constants $\lambda$ and $\Lambda$.
	If $S_\varphi(x,h)\Subset\Omega$ is a normalized section, then
	for all $y,z\in S_\varphi(x,h/2)$, we have
	\begin{align*}
	|D\varphi(y)-D\varphi(z)| \leq C|y-z|^\alpha
	\text{,}
	\end{align*}
	where
	\begin{equation}\label{intc1aconst}
	\begin{aligned}
		C=C(n,\lambda,\Lambda)>0
		\quad\text{and }
		\alpha=\alpha(n,\lambda,\Lambda)>0
		\text{.}
	\end{aligned}
	\end{equation}
\end{theorem}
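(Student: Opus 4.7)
The plan is to follow Caffarelli's original affine rescaling argument, whose core is the self-similar structure of sections under the Monge-Amp\`ere equation together with strict convexity. Throughout, I will work on the normalized section $S_\varphi(x,h)$ and use the affine invariance of the Monge-Amp\`ere equation: if $T$ is an affine map and $\tilde{\varphi}(z)=(\det T)^{-2/n}\,\varphi(Tz)$, then $\det D^2\tilde{\varphi}=(\det D^2\varphi)\circ T$, so both the bounds $\lambda\le \det D^2\varphi\le \Lambda$ and the normalized-section hypothesis are preserved.

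The first step is to reduce the Hölder estimate for $D\varphi$ to a polynomial decay of section diameters: for every $y\in S_\varphi(x,h/2)$ and every sufficiently small $t>0$,
\begin{equation*}
\diam\bigl(S_\varphi(y,t)\bigr)\le C\, t^{\alpha'},
\end{equation*}
with $\alpha'=\alpha'(n,\lambda,\Lambda)>0$ and $C=C(n,\lambda,\Lambda)$. Granted this, for $y,z\in S_\varphi(x,h/2)$ with $|y-z|$ small I choose the smallest $t$ with $z\in \overline{S_\varphi(y,t)}$ and (via the Caffarelli--Guti\'errez engulfing property) $y\in \overline{S_\varphi(z,\theta t)}$. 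Convexity then yields
\begin{equation*}
(D\varphi(z)-D\varphi(y))\cdot(z-y)\le (1+\theta)\,t,
\end{equation*}
so $t\gtrsim |D\varphi(y)-D\varphi(z)|\cdot|y-z|$; combining with $|y-z|\le\diam(S_\varphi(y,t))\le Ct^{\alpha'}$ produces the desired estimate $|D\varphi(y)-D\varphi(z)|\le C|y-z|^\alpha$ with $\alpha=\alpha'/(1-\alpha')$, once $\alpha'<1$ is arranged.

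To prove the polynomial decay of sections, I would normalize $S_\varphi(y,t)-y$ by John's lemma (Theorem~\ref{john}): there is an affine map $T_{y,t}$ with $B_1\subset T_{y,t}^{-1}(S_\varphi(y,t)-y)\subset B_n$. The rescaled potential
\begin{equation*}
\varphi_{y,t}(z):=(\det T_{y,t})^{-2/n}\bigl[\varphi(T_{y,t}z+y)-\varphi(y)-D\varphi(y)\cdot T_{y,t}z - t\bigr]
\end{equation*}
is convex on a normalized domain, vanishes on its boundary, and still satisfies $\lambda\le \det D^2\varphi_{y,t}\le\Lambda$. Caffarelli's strict convexity theorem (applicable because of~(1.3) and the geometry of $S_\varphi(x,h)$) then gives that the $\{\varphi_{y,t}\le -1/2\}$ sub-level is compactly contained in the $\{\varphi_{y,t}\le 0\}$ set, with a quantitative gap depending only on $n,\lambda,\Lambda$. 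Combined with the Monge-Amp\`ere volume estimate $|S_\varphi(y,s)|\asymp s^{n/2}$, iterating this gap between dyadic heights $t,\,t/2,\,t/4,\dots$ produces the claimed power-law decay of $\diam(S_\varphi(y,t))$.

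The main obstacle is quantitative control of the normalizing maps $T_{y,t}$: John's lemma gives shape control of each individual section, but not a universal eccentricity bound, and the decay rate of the minor axis of $T_{y,t}$ dictates the final exponent $\alpha$. The key is that $y\in S_\varphi(x,h/2)$ is uniformly interior to the normalized section $S_\varphi(x,h)$, so Caffarelli's strict convexity provides a quantitative control of how fast the sections $S_\varphi(y,t)$ shrink to $\{y\}$ as $t\downarrow 0$. A careful iteration, as in Caffarelli's original argument, then translates the geometric decay of sections into a power-law decay with the exponent and constant depending only on $n,\lambda,\Lambda$, yielding~(\ref{intc1aconst}).
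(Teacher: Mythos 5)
The paper does not give a proof of this result: Theorem~\ref{intc1a} is quoted from Caffarelli's 1991 paper and used as a black box (primarily through Corollary~\ref{intc1asec}, which records the consequence $B_{ct^{1/(1+\alpha)}}(x)\subset S_\varphi(x,t)$). So there is no in-paper proof to compare against; the only question is whether your sketch itself is sound.

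Your overall architecture --- affine invariance, John normalization of sections, Caffarelli's quantitative strict convexity, dyadic iteration --- is the correct skeleton of Caffarelli's original argument, and the last two paragraphs are a reasonable (if compressed) description of the section-renormalization machinery. The genuine gap is in the first reduction step, and it is a matter of direction. The diameter bound $\diam S_\varphi(y,t)\le C\,t^{\alpha'}$ is equivalent to the \emph{lower} bound $\varphi(z)-\varphi(y)-D\varphi(y)\cdot(z-y)\ge c\,|y-z|^{1/\alpha'}$, i.e.\ it encodes \emph{quantitative strict convexity}, not $C^{1,\alpha}$ regularity. What one needs for the $C^{1,\alpha}$ estimate is the dual statement that sections cannot shrink too fast in every direction --- the inclusion $B_{ct^{1/(1+\alpha)}}(y)\subset S_\varphi(y,t)$, which is exactly Corollary~\ref{intc1asec} and is equivalent to the \emph{upper} bound $\varphi(z)-\varphi(y)-D\varphi(y)\cdot(z-y)\le C\,|y-z|^{1+\alpha}$. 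Concretely, in your reduction you end up with two lower bounds on the same $t$ (one from the engulfing inequality, one from $|y-z|\le Ct^{\alpha'}$), and two lower bounds do not combine into anything.

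There is a second, separate issue in the line ``$(D\varphi(z)-D\varphi(y))\cdot(z-y)\le(1+\theta)t$, so $t\gtrsim|D\varphi(y)-D\varphi(z)|\cdot|y-z|$.'' This is a reverse Cauchy--Schwarz inequality, and it is false for a general convex function: for $\varphi(x)=\tfrac12(x_1^2+Mx_2^2)$ with $M$ large and $y=0$, $z=(1,1/\sqrt{M})$, the vectors $D\varphi(z)-D\varphi(y)$ and $z-y$ are nearly orthogonal, so the inner product is much smaller than the product of norms. This near-degeneracy is exactly the situation the Monge--Amp\`ere theory must handle (eccentric sections), so one cannot wave it away. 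The standard fix is to first establish the pointwise estimate $|\varphi(z)-\varphi(y)-D\varphi(y)\cdot(z-y)|\le M|y-z|^{1+\alpha}$ uniformly in $y$, and then deduce Hölder continuity of $D\varphi$ by evaluating the affine difference $Q(w):=\bigl[\varphi(y)+D\varphi(y)\cdot(w-y)\bigr]-\bigl[\varphi(z)+D\varphi(z)\cdot(w-z)\bigr]$ at a test point $w$ offset by $|y-z|$ in an \emph{arbitrary} direction $e$, so that $|(D\varphi(y)-D\varphi(z))\cdot e|$ is controlled; this requires the upper bound, not the diameter bound. In short: the engine of your proof (section renormalization and iteration) is aimed at the right target, but you have plugged in the wrong output of that engine, and the bridge from section geometry to gradient Hölder continuity needs to be rebuilt along the lines above.
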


This $C^{1,\alpha}$ estimate implies that sections contain balls with the same center.

\begin{corollary}\label{intc1asec}
	With the same assumptions as in Theorem \ref{intc1a},
	if $t\leq h/2$ we have
	\begin{equation*}
	\begin{aligned}
		B_{ct^{1/(1+\alpha)}}(x) \subset S_\varphi(x, t)
		\text{,}
	\end{aligned}
	\end{equation*}
	where $\alpha$ is defined in (\ref{intc1aconst}) and $c=c(n,\lambda,\Lambda)>0$.
\end{corollary}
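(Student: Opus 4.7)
The strategy will be to establish a $(1+\alpha)$-power bound on the "Taylor remainder" $R(y) := \varphi(y) - \varphi(x) - D\varphi(x)\cdot(y-x)$ for $y$ near $x$, and then invert this bound to locate a ball inside the section. First, I would apply Theorem~\ref{intc1a} to obtain $|D\varphi(y) - D\varphi(x)| \leq C|y-x|^\alpha$ for all $y \in S_\varphi(x, h/2)$, with $C,\alpha$ depending only on $n,\lambda,\Lambda$. Since $S_\varphi(x, h/2)$ is convex and contains $x$, for each such $y$ the segment from $x$ to $y$ lies in $\overline{S_\varphi(x, h/2)}$, so integrating $D\varphi$ along this segment (and using convexity of $\varphi$ for the lower bound) would yield
\[
0 \leq R(y) = \int_0^1 \bigl(D\varphi(x+s(y-x)) - D\varphi(x)\bigr)\cdot(y-x)\,ds \leq \frac{C}{1+\alpha}|y-x|^{1+\alpha}.
\]

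Next, I would set $c := \tfrac{1}{2}\bigl((1+\alpha)/C\bigr)^{1/(1+\alpha)}$ and $r := c\, t^{1/(1+\alpha)}$, and argue by contradiction that $B_r(x) \subset S_\varphi(x, t)$ whenever $t \leq h/2$. Suppose some $y \in B_r(x)$ lies outside $S_\varphi(x,t)$. Since $R(x) = 0$ and $R$ is continuous, the segment from $x$ to $y$ contains a first exit point $y_0$ from $S_\varphi(x, t)$, at which $R(y_0) = t$. Crucially, $|y_0-x| < r$ and $y_0 \in \overline{S_\varphi(x, h/2)}$ because $t \leq h/2$, so the Taylor-type bound extends to $y_0$ by continuity of $D\varphi$. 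That bound would then give
\[
t = R(y_0) \leq \frac{C}{1+\alpha}|y_0 - x|^{1+\alpha} < \frac{C}{1+\alpha}\, c^{1+\alpha}\, t = \tfrac{1}{2}\,t,
\]
which is absurd; hence $B_r(x) \subset S_\varphi(x,t)$ with $c = c(n,\lambda,\Lambda)$ as claimed.

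The argument is a routine continuity/first-exit argument, so the only real subtlety is confirming that the exit point $y_0$ lives in a region where Theorem~\ref{intc1a} applies. This is precisely what the hypothesis $t \leq h/2$ guarantees: the closed sub-level set $\overline{S_\varphi(x,t)}$ sits inside $\overline{S_\varphi(x, h/2)}$, on which $D\varphi$ is H\"older continuous with constants depending only on $n,\lambda,\Lambda$. Once the pointwise estimate on $R$ is available on $\overline{S_\varphi(x, h/2)}$, solving the inequality $R(y) < t$ via $|y-x|^{1+\alpha}$ produces the claimed ball.
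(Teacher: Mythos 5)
Your proof is correct and is precisely the argument the paper intends but leaves unwritten: the paper merely remarks ``This $C^{1,\alpha}$ estimate implies that sections contain balls with the same center,'' and your integration of the H\"older bound along segments, $R(y) \leq \frac{C}{1+\alpha}|y-x|^{1+\alpha}$, followed by inverting the power to locate the ball, is exactly how one fills in that remark. The first-exit argument is a clean way to justify that the estimate is applicable at the point where it is needed, and your observation that $t \leq h/2$ places $\overline{S_\varphi(x,t)}$ inside the region where Theorem~\ref{intc1a} applies is the crucial hypothesis check. One trivial arithmetic note: with your choice $c = \tfrac{1}{2}\bigl((1+\alpha)/C\bigr)^{1/(1+\alpha)}$ the final quantity is $2^{-(1+\alpha)}t$ rather than $\tfrac{1}{2}t$; since $2^{-(1+\alpha)} < \tfrac{1}{2}$ the contradiction still stands, but taking $c = \bigl((1+\alpha)/(2C)\bigr)^{1/(1+\alpha)}$ would make the equality you wrote exact.
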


We will also use the interior $W^{2,1+\ee}$ estimate of 
De Philippis-Figalli-Savin \cite{DPFS} and Schmidt \cite{Schmidt}
for the Monge-Amp\`ere equation.
We will use the following formulation for compactly supported sections (see \cite[Corollary 6.26]{Le24}).
\begin{theorem}[Interior $W^{2,1+\ee}$ estimate]\label{intw2pest}
	Let $\Omega$ be a convex domain in $\mathbb{R}^n$.
	Let $\varphi$: $\Omega\rightarrow\mathbb{R}$
	be a continuous convex solution to the Monge-Amp\`ere equation 
	\begin{equation*}
	\begin{aligned}
		\det D^2\varphi = f \quad\text{in }\Omega\text{, }\quad
		0<\lambda\leq f\leq\Lambda
		\text{.}
	\end{aligned}
	\end{equation*}
	Suppose $S_\varphi(x_0,h)$ is a normalized section, and $S_\varphi (x_0, 2h)\Subset\Omega$.
	Then, for $\ee=\ee^*(n,\lambda,\Lambda)>0$ and $C=C(n,\lambda,\Lambda)>0$,
	we have
	\begin{equation*}
	\begin{aligned}
		\norm{D^2\varphi}_{L^{1+\ee}(S_\varphi(x_0,h))}\leq C
		\text{.}
	\end{aligned}
	\end{equation*}
\end{theorem}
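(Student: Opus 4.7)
The plan is to follow the strategy of De Philippis--Figalli--Savin, as refined by Schmidt to yield bounds on compactly contained normalized sections, and establish a distributional decay
\begin{equation*}
|\{x\in S_\varphi(x_0,h) : |D^2\varphi(x)|>M\}|\leq C\,M^{-(1+\ee^*)}\,|S_\varphi(x_0,h)|,\qquad M\geq M_0,
\end{equation*}
with $\ee^*=\ee^*(n,\lambda,\Lambda)>0$. Combined with the layer-cake formula this yields the claimed $L^{1+\ee}$ bound for any $\ee<\ee^*$. By hypothesis $S_\varphi(x_0,h)$ is normalized and $S_\varphi(x_0,2h)\Subset\Omega$, so Caffarelli's strict convexity theorem applies: every interior point is the center of arbitrarily small normalized subsections, and the standard engulfing property of sections holds at every scale.

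The heart of the argument is a stability/compactness dichotomy: there exist constants $K=K(n,\lambda,\Lambda)$ and $\eta=\eta(n,\lambda,\Lambda)>0$ such that for every normalized subsection $S$ with $2S\Subset S_\varphi(x_0,h)$, either the oscillation of $\det D^2\varphi$ on $S$ exceeds $\eta$, or $\|D^2\varphi\|_{L^\infty(\tfrac12 S)}\leq K$. I would prove this by contradiction: were it to fail, affine rescaling of the offending sections to $B_1$ (using John's lemma, Theorem \ref{john}) would produce a sequence of convex solutions $\varphi_k$ of $\det D^2\varphi_k=f_k$ on normalized domains with $f_k\to c\in[\lambda,\Lambda]$ uniformly; by Caffarelli's interior $C^{1,\alpha}$ estimate (Theorem \ref{intc1a}) a subsequence converges in $C^{1,\alpha}$ to a strictly convex solution $\varphi_\infty$ of $\det D^2\varphi_\infty=c$, which is classically $C^{2,\alpha}$ with bounded Hessian on any compactly contained subsection by Caffarelli's $C^{2,\alpha}$ theory, contradicting the assumed blow-up of $D^2\varphi_k$.

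With this dichotomy in hand, I would run a Calder\'on--Zygmund-type stopping-time iteration on sections. Fix $M$ large and put $E_M=\{|D^2\varphi|>M\}$. Around each Lebesgue point of $E_M\cap S_\varphi(x_0,h/2)$ I select a maximal normalized subsection on which the \emph{bad} alternative holds, and cover $E_M$ by such subsections using a Vitali-type selection based on engulfing and on the uniform doubling of the Monge-Amp\`ere measure. Since $\det D^2\varphi$ is globally pinned in $[\lambda,\Lambda]$, the total measure of bad subsections cannot exhaust $S_\varphi(x_0,h)$; combining this with the half-section bound $\|D^2\varphi\|_{L^\infty(\frac12 S)}\leq K$ on the good subsections gives the key geometric decay
\begin{equation*}
|E_{\alpha M}\cap S_\varphi(x_0,h/2)|\leq(1-\delta)\,|E_M\cap S_\varphi(x_0,h)|
\end{equation*}
for universal constants $\alpha,\delta\in(0,1)$, and iterating $k$ times produces the desired power-law decay with $\ee^*=-\log(1-\delta)/\log\alpha$.

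The main obstacle is the compactness step and the bookkeeping of nested affine renormalizations: one must ensure that the normalizing maps of the subsections remain quantitatively comparable (so that the limiting procedure lands in a fixed-geometry setting) and that strict convexity of the limit is preserved. This relies crucially on the fact that normalized sections are uniformly comparable to Euclidean balls (Theorem \ref{john} combined with Corollary \ref{intc1asec}), which allows $C^{1,\alpha}$ compactness of the rescaled sequence. Once the dichotomy is established, the Calder\'on--Zygmund iteration on sections is by now standard in Monge-Amp\`ere regularity theory, and the exponent $\ee^*$ emerges from balancing the geometric decay rate $1-\delta$ against the scale factor $\alpha$.
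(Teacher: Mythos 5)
The paper does not prove this statement: Theorem \ref{intw2pest} is quoted as background from De Philippis--Figalli--Savin \cite{DPFS}, Schmidt \cite{Schmidt}, and \cite[Corollary 6.26]{Le24}, so your sketch can only be measured against the literature proof. Measured that way, it has a genuine gap at its core. The dichotomy you propose --- on every normalized subsection $S$ either $\osc_S(\det D^2\varphi)>\eta$ or $\norm{D^2\varphi}_{L^\infty(\frac12 S)}\le K$ --- is the engine of Caffarelli's interior $W^{2,p}$ theory \cite{Cafw2p}, which requires the right-hand side to have small oscillation (one gets $D^2\varphi\in L^p_{\mathrm{loc}}$ only when $\Lambda/\lambda-1\le e^{-C(n)p}$), and it cannot drive the $W^{2,1+\ee}$ estimate, whose entire point is to extract some $\ee>0$ from the bare pinching $\lambda\le\det D^2\varphi\le\Lambda$. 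Two things go wrong. First, nothing forces the ``bad'' alternative to be rare: a right-hand side oscillating between $\lambda$ and $\Lambda$ at every scale is admissible here, so your assertion that ``the total measure of bad subsections cannot exhaust $S_\varphi(x_0,h)$'' has no justification, and the stopping-time argument may never produce a good section. Second, the ``good'' alternative is not correct as stated: $\norm{f-c}_{L^\infty(S)}\le\eta$ does not yield an interior $L^\infty$ bound on $D^2\varphi$ (that requires Dini or H\"older continuity of $f$), and in your compactness step the convergence $\varphi_k\to\varphi_\infty$ is only $C^{1,\alpha}$, so smoothness of $\varphi_\infty$ gives no contradiction with blow-up of $D^2\varphi_k$ on small sets --- which is exactly why Caffarelli's argument is phrased in terms of the measure of the set touched by paraboloids of fixed opening rather than pointwise Hessian bounds.

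The missing idea is the correct base step of \cite{DPFS}: a universal interior $L^1$ bound $\int_{S}\Delta\varphi\,dx\le C(n,\lambda,\Lambda)|S|$ on normalized sections (obtained from the interior gradient bound by integration by parts), which by Chebyshev gives, with \emph{no} oscillation hypothesis, that $|\{x\in \tfrac12 S:\norm{D^2\varphi(x)}\le K\}|\ge \tfrac12|S|$ for a universal $K$. It is this quantitative nondegeneracy on every section, combined with a covering of $\{\Delta\varphi>N^{k}\}$ by sections on which $\Delta\varphi$ is comparable to $N^k$, that yields the geometric decay $\int_{\{\Delta\varphi>N^{k+1}\}}\Delta\varphi\,dx\le\sigma\int_{\{\Delta\varphi>N^{k}\}}\Delta\varphi\,dx$ with $\sigma<1$, and hence $\Delta\varphi\in L^{1+\ee}$; the decay is of truncated energies, not directly of measures. (Also, as written your decay inequality is reversed: with $\alpha\in(0,1)$ one has $E_{\alpha M}\supseteq E_M$, so $|E_{\alpha M}|\le(1-\delta)|E_M|$ forces $|E_M|=0$, and your formula for $\ee^*$ comes out negative; the threshold must increase at each step.)
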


We have the following volume estimates for sections (see \cite[Lemma 5.6(i)]{Le24}).
\begin{lemma}[Volume estimate for sections]\label{secvolest}
	Suppose $\varphi$ is a $C^1$ convex solution to $\lambda\leq \det D^2\varphi \leq \Lambda$
	for positive constants $\lambda$ and $\Lambda$ in $\Omega\subset\mathbb{R}^n$.
	If $S_\varphi (x,h)\Subset\Omega$, then
	\begin{equation*}
	\begin{aligned}
		c(\Lambda,n)h^{n/2}\leq|S_\varphi(x,h)|
		\leq C(\lambda,n)h^{n/2}
	\end{aligned}
	\end{equation*}
	for positive constants $c$ and $C$.
\end{lemma}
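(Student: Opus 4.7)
My plan is to reduce the claim to a normalized setting using John's lemma and then extract the two bounds from, respectively, the Alexandrov maximum principle (for the lower bound) and a Santal\`o--type inequality for Monge--Amp\`ere sections (for the upper bound).

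By Theorem~\ref{john}, there is an affine map $T(y) = Ay + b$ such that $B_1 \subset K := T^{-1}(S_\varphi(x,h)) \subset B_n$. I would introduce the rescaled convex potential
$$\tilde\varphi(y) := \tfrac{1}{h}\bigl[\varphi(Ty) - \varphi(x) - D\varphi(x)\cdot(Ty - x)\bigr]\quad\text{on } K.$$
A direct chain--rule computation gives $\det D^2\tilde\varphi(y) = (\det A)^2 h^{-n}\det D^2\varphi(Ty) \in [\lambda(\det A)^2/h^n,\,\Lambda(\det A)^2/h^n]$, while $\tilde\varphi$ is convex, attains its minimum value $0$ at $\tilde x_0 := T^{-1}x$, and equals $1$ identically on $\partial K$. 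Since $|S_\varphi(x,h)| = |\det A|\cdot|K|$ with $|B_1| \leq |K| \leq |B_n|$, the lemma reduces to the comparability $(\det A)^2 \sim h^n$ with constants depending only on $n, \lambda, \Lambda$.

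For the lower bound on $|S_\varphi(x,h)|$, I apply the Alexandrov--Bakelman--Pucci maximum principle to the convex function $u := \tilde\varphi - 1 \leq 0$, which vanishes on $\partial K$ and takes the value $-1$ at $\tilde x_0$; using $\diam(K) \leq 2n$,
$$1 = |u(\tilde x_0)|^n \leq C(n)\,\mu_{\tilde\varphi}(K) \leq C(n)\,\tfrac{\Lambda(\det A)^2}{h^n}\,|K| \leq C(n,\Lambda)\,\tfrac{(\det A)^2}{h^n},$$
which yields $(\det A)^2 \geq c(n,\Lambda)h^n$ and hence $|S_\varphi(x,h)| \geq c(n,\Lambda)h^{n/2}$.

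For the matching upper bound I pass back to the original variables and set $w(y) := \varphi(y) - \varphi(x) - D\varphi(x)\cdot(y-x) \geq 0$, so that $w(x) = 0$, $Dw(x) = 0$, and $S_\varphi(x,h) = \{w < h\}$. The Monge--Amp\`ere identity $\mu_w(S_\varphi(x,h)) = |Dw(S_\varphi(x,h))|$ combined with $\det D^2 w \geq \lambda$ gives $|Dw(S_\varphi(x,h))| \geq \lambda |S_\varphi(x,h)|$; combined with the Santal\`o--type duality $|S_\varphi(x,h)|\cdot|Dw(S_\varphi(x,h))| \leq C(n)h^n$ for sections of convex functions with vanishing center value and gradient, this forces $\lambda\,|S_\varphi(x,h)|^2 \leq C(n)h^n$, i.e., $|S_\varphi(x,h)| \leq C(n,\lambda)h^{n/2}$. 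The main technical obstacle is the upper bound: unlike the lower bound, it is not a direct consequence of the maximum principle applied to $\tilde\varphi$, and requires the Santal\`o--type inequality above, which in turn follows from a careful analysis of the Legendre transform $w^*$ (for which $w^*(0) = 0$ and $Dw^*(0) = x$) and the duality between the sections $\{w \leq h\}$ and $\{w^* \leq C h\}$.
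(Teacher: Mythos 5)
The paper does not prove this lemma; it is quoted from \cite[Lemma 5.6(i)]{Le24}. Still, your argument can be evaluated on its own terms. The reduction via John's lemma is correct, and your lower bound via Aleksandrov's estimate applied to $\tilde\varphi - 1$ is exactly the standard argument, with the dependence on $\Lambda$ coming out right.

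The upper bound has a genuine gap: the claimed Santal\`o-type inequality $|S_\varphi(x,h)|\cdot|Dw(S_\varphi(x,h))| \leq C(n)\,h^n$ for convex $w$ with $w(x)=0$, $Dw(x)=0$ is false. In $\mathbb{R}$, take $w(y)=(|y|-\delta)_+^2$ with $\delta\gg\sqrt{h}$; this is $C^1$ and convex, $w(0)=0$, $w'(0)=0$, and $S_w(0,h)=(-\delta-\sqrt{h},\,\delta+\sqrt{h})$ has length $2(\delta+\sqrt{h})$, while $w'(S_w(0,h))=(-2\sqrt{h},\,2\sqrt{h})$ has length $4\sqrt{h}$, so the product $8\sqrt{h}(\delta+\sqrt{h})$ is unbounded as $\delta\to\infty$ with $h$ fixed. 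The same example works in all dimensions. What convexity actually gives you is that for a fixed $y\in S$, $Dw(y)\cdot(z-y)\leq h$ for all $z\in S$, i.e.\ $Dw(y)\in h\cdot(S-y)^\circ$, but one cannot bound $|(S-y)^\circ|$ by $C(n)/|S|$ uniformly over $y\in S$ (the polar volume blows up as $y\to\partial S$), so no Santal\`o or Rogers--Shephard argument closes the estimate without using the lower bound on $\det D^2\varphi$ --- and then the reasoning is circular, since that lower bound is precisely what the Santal\`o step was supposed to combine with.

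The standard (and simpler) route for the upper bound stays in the normalized picture with the John center at the origin, so $B_1\subset K\subset B_n$. Set $m:=\lambda(\det A)^2/h^n$ and compare $\tilde\varphi$ with the paraboloid $v(y)=\tfrac{m^{1/n}}{2}(|y|^2-1)+1$: then $\det D^2 v=m\leq \det D^2\tilde\varphi$ and $v\geq 1=\tilde\varphi$ on $\partial K$, so the Monge--Amp\`ere comparison principle gives $\tilde\varphi\leq v$ in $K$. Evaluating at the origin and using $\tilde\varphi\geq 0$ yields $m\leq 2^n$, hence $(\det A)^2\leq 2^n h^n/\lambda$ and $|S_\varphi(x,h)|=|\det A|\,|K|\leq C(\lambda,n)h^{n/2}$.
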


We will also use the following Harnack inequality for 
linearized Monge-Amp\`ere equations with drift from Le \cite[Theorem 1.1]{Le18}.
\begin{theorem}[Harnack inequality for linearized Monge-Amp\`ere equations]\label{harnackref}
Let $\Omega\subset\mathbb{R}^n$ be a bounded convex domain.
Assume that $\varphi$ satisfies (\ref{maeqn}),
and define $\Phi=(\Phi^{ij})_{1\leq i,j\leq n}$ as in (\ref{cofdef}).
Suppose that $v\geq 0$ is a $W_{\mathrm{loc}}^{2,n}(\Omega)$ solution of 
\begin{align}
\Phi^{ij} D_{ij} v + \bb \cdot Dv + cv = f
\end{align}
in a section $S:=S_\varphi(x_0, 2h)\Subset\Omega$, where 
$h\leq h_0$ for a positive, fixed $h_0$,
$f\in L_{\mathrm{loc}}^n(\Omega)$, $c\in L_{\mathrm{loc}}^n(\Omega)$, 
and $\bb\in L_{\mathrm{loc}}^\infty(\Omega;\mathbb{R}^n)$. 
Then
\begin{align}
\sup_{S_\varphi(x_0, h)} v
\leq C \left(\inf_{S_\varphi(x_0,h)} v + h^{1/2} \norm{f}_{L^n(S)} \right)
\text{,}
\end{align}
where $C$ is a positive constant depending on $n$, $\lambda$, $\Lambda$, $h_0$,
$\norm{\bb}_{L^\infty(S)}$, and $\norm{c}_{L^n(S)}$.
\end{theorem}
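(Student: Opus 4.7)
The plan is to adapt the Caffarelli-Guti\'errez Harnack inequality for the homogeneous linearized Monge-Amp\`ere operator $L_\varphi w=\Phi^{ij}D_{ij}w$ to accommodate the lower-order terms $\bb\cdot Dw$ and $cw$, following the Krylov-Safonov scheme with sections of $\varphi$ playing the role of Euclidean balls. After using John's lemma (Theorem \ref{john}) to normalize $S_\varphi(x_0,2h)$, I would work on the normalized section and then undo the affine transformation; this is harmless because the volume estimate in Lemma \ref{secvolest} and the geometric constants depend only on $n$, $\lambda$, $\Lambda$, and the diameter. The heart of the proof is to establish a weak Harnack inequality for nonnegative supersolutions together with a local $L^\infty$ bound for subsolutions, and then to combine them in the classical Moser/Krylov-Safonov loop.

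First I would establish a section-based ABP pointwise estimate for the operator $\Phi^{ij}D_{ij}w+\bb\cdot Dw+cw=f$: applying Alexandrov's maximum principle to the convex envelope of $-w^{+}$, and using the identity $\det\Phi=(\det D^2\varphi)^{n-1}\in[\lambda^{n-1},\Lambda^{n-1}]$, one controls the contact-set integral of $\det D^2 w$ by $\|f\|_{L^n(S)}$. The drift contributes an extra $\|\bb\|_{L^\infty(S)}|Dw|$ term, which is controlled on the convex contact set where $|Dw|$ is bounded by a multiple of $h^{1/2}/\mathrm{diam}(S)$. The zeroth-order term $cw$ is moved to the right-hand side, and since $c\in L^n$ the resulting $cw$ contribution is absorbed by iteration together with the Monge-Amp\`ere Sobolev inequality in Theorem \ref{ma-sobolev}. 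From this ABP inequality I would derive the critical density lemma: if $v\geq 0$ is a supersolution and $|\{v\geq 1\}\cap S_\varphi(x_0,h)|\geq\delta|S_\varphi(x_0,h)|$, then $v\geq\eta>0$ on $S_\varphi(x_0,h/2)$, up to an additive $h^{1/2}\|f\|_{L^n(S)}$ error.

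With the critical density in hand, a Calder\'on-Zygmund-type decomposition of $S_\varphi(x_0,h)$, based on Caffarelli's engulfing property of sections and the Vitali-type covering theorems associated with them, produces the power decay $|\{v>t\}\cap S_\varphi(x_0,h)|\leq Ct^{-p}|S_\varphi(x_0,h)|$ for $t\gtrsim\inf v+h^{1/2}\|f\|_{L^n(S)}$, which is the weak Harnack inequality. A parallel but simpler Moser iteration (again using Theorem \ref{ma-sobolev} as the Sobolev ingredient, together with test functions of the form $\eta^{2}v^{2q-1}$) produces the local $L^\infty$ bound $\sup_{S_\varphi(x_0,h)}v\leq C\bigl(\|v\|_{L^p(S_\varphi(x_0,2h))}+h^{1/2}\|f\|_{L^n(S)}\bigr)$ for subsolutions, with the same exponent $p$. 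Combining the two inequalities yields the theorem. The main obstacle I expect is the precise incorporation of the drift $\bb\cdot Dv$ into the ABP estimate and the critical density lemma: the gradient on the convex contact set is controlled only through section-based quantities, so the powers of $h$ and the exponents have to be matched carefully, and the constants must be tracked through the affine normalization so that the final dependence on $\|\bb\|_{L^\infty(S)}$, $\|c\|_{L^n(S)}$, $h_0$, $\lambda$, and $\Lambda$ comes out in the stated form.
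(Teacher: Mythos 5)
This theorem is not proved in the paper; it is quoted from Le \cite{Le18}, where the argument goes via the sliding-paraboloid technique of Cabr\'e and Savin. Your proposal takes a genuinely different route: the Caffarelli--Guti\'errez framework (a section-based ABP estimate using $\det\Phi=(\det D^2\varphi)^{n-1}$, a critical-density lemma, a Calder\'on--Zygmund decomposition adapted to sections and power decay) for the weak Harnack inequality of supersolutions, combined with a divergence-form Moser iteration (exploiting $D_i\Phi^{ij}=0$ and Theorem~\ref{ma-sobolev}) for the local maximum principle of subsolutions. Both routes do reach the stated conclusion; yours is closer in spirit to the Moser-iteration techniques used in the rest of this paper, at the cost of having to rebuild the Caffarelli--Guti\'errez covering machinery in the presence of the lower-order terms $\bb\cdot Dv$ and $cv$.

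The step you should treat much more carefully is the zeroth-order term $cv$ with $c\in L^n$ and no sign restriction. In the ABP/critical-density step, the estimate only comes out cleanly when $c\leq 0$; for general $c$ one has to split $c=c^+-c^-$, keep $-c^-v$ on the left, move $c^+v$ to the right as an additional source, and then close a smallness-of-domain loop (this is exactly where the restriction $h\leq h_0$ and the dependence of $C$ on $h_0$ and $\norm{c}_{L^n(S)}$ enter), so that the $\norm{c^+}_{L^n}$-contribution can be absorbed. This smallness must also be checked to survive the affine normalization: after John's lemma the rescaled coefficient $\widetilde{c}$ must have $L^n$ norm that stays under control, in the same way the paper tracks $\norm{\widetilde{f}}_{L^r}$ and $\norm{\dv\widetilde{\bbb}}_{L^n}$ in the proof of Theorem~\ref{thm:harnack}. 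Your phrase that the $cw$ contribution is ``absorbed by iteration together with the Monge--Amp\`ere Sobolev inequality'' glosses over precisely this point, which is the most delicate part of your route.
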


\begin{definition}[Subsolutions to equation (\ref{eqn}) in a domain $S$]
	Let $\Omega\subset\mathbb{R}^n$ be a bounded domain,
	and $S$ be a domain contained in $\Omega$.
	Suppose $\ff, \bbb, \bb\in L_{\mathrm{loc}}^\infty(\Omega;\mathbb{R}^n)
	\cap W_{\mathrm{loc}}^{1,n}(\Omega;\mathbb{R}^n)$ and
	$f\in L_{\mathrm{loc}}^n(\Omega)$. 
	We say that $u\in W^{1,2}(S)$ is a \emph{(weak) subsolution} to (\ref{eqn})
	if for all $v\in W_0^{1,2}(S)$ with $v\geq 0$ in $S$, we have
	\begin{equation}
	\begin{aligned}
		\int_S \Phi Du \cdot Dv \, dx + \int_S u\bbb\cdot Dv\, dx
		+\int_S (\bb\cdot Du) v \, dx \leq
		\int_S \ff \cdot Dv \, dx + \int_S fv \, dx
		\text{.}
	\end{aligned}
	\end{equation}
\end{definition}

\section{Global Estimates}

In this section, we prove global estimates for solutions
to equation (\ref{eqn}) with zero boundary data on sections in
Proposition \ref{lem:rescale}.
These estimates will be used to prove the Harnack inequality,
Theorem \ref{thm:harnack}, in Section 4.
The following is a brief outline of the steps leading to the
proof of Proposition \ref{lem:rescale}.

We begin with Lemma \ref{lemma1},
which provides an estimate for subsolutions $u$
that are nonpositive on the boundary of normalized sections.
By defining suitable test functions and using Moser iteration,
we derive an estimate for the $L^\infty$ norm of $u^+$ in terms of its $L^2$ norm.
In Lemma \ref{lemma2}, we obtain an $L^2$ bound for $w$ of the form $\log \frac{C}{C-u^+}$.
Next, in Lemma \ref{lemma3} we show that $w$ is a subsolution to a
linearized Monge-Amp\`ere equation of the form in (\ref{eqn}).
This gives global estimate for $u^+$ independent of the $L^2$ norm of $u$.
Applying Lemma \ref{lemma3} to $u$ and $-u$ gives Lemma \ref{lemma3-1},
which provides global estimates in normalized sections.
Finally, rescaling Lemma \ref{lemma3-1} gives us Proposition \ref{lem:rescale}.

We now proceed with the proof of the following lemma.         

\begin{lemma}\label{lemma1}
	Let $\varphi\in C^3(\Omega)$ be a convex function satisfying (\ref{maeqn}).
	Suppose $\ff, \bbb, \bb\in L_{\mathrm{loc}}^\infty(\Omega;\mathbb{R}^n)
	\cap W_{\mathrm{loc}}^{1,n}(\Omega;\mathbb{R}^n)$,
	$f\in L_{\mathrm{loc}}^n(\Omega)$, and $n/2<r\leq n$.
	Suppose $S = S_\varphi(x,t)$ is a normalized section, and $S_\varphi(x,2t)\Subset\Omega$.
	Suppose $u \in W^{1,2}(S)
	\cap C(\overline{S})$ 
	is a subsolution to (\ref{eqn}) in $S$ satisfying $u \leq 0$ on $\partial S$.
	Assume that
	\begin{enumerate}
		\item either $n= 2$, or 
		\item $n\geq 3$ and $\ee^*(n,\lambda,\Lambda) + 1 > \frac{n}{2}$,
		where $\ee^*$ is as in Theorem \ref{intw2pest}.
	\end{enumerate}
	Then,
	\begin{equation}\label{lem1est}
	\begin{aligned}
		\sup_S u^+\leq C\left(\norm{\ff}_{L^\infty(S)}+\norm{f}_{L^r(S)} + \norm{u^+}_{L^2(S)}\right)
		\text{,}
	\end{aligned}
	\end{equation}
	where 
	\begin{equation*}
	\begin{aligned}
		C &= C(n,\lambda,\Lambda, r, \ee^*, \norm{\bb}_{L^\infty(S)}, 
		\norm{\bbb}_{L^\infty(S)}, \norm{D^2\varphi}_{L^{1+\ee^*}(S)})
		\text{.}
	\end{aligned}
	\end{equation*}
\end{lemma}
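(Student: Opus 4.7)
The argument is Moser iteration, tailored to the degenerate linearized Monge-Amp\`ere operator. Set $w = u^+$, $k = \norm{\ff}_{L^\infty(S)} + \norm{f}_{L^r(S)}$, and $\tilde{u} = w + k$. For $q \geq 1$, I will test the weak subsolution inequality against $v = \tilde{u}^{2q-1} - k^{2q-1}$, which is admissible because $u \leq 0$ on $\partial S$ forces $w = 0$, hence $v = 0$, on $\partial S$ (to justify the insertion inside $W_0^{1,2}(S)$ one first truncates $\tilde{u}$ at height $M$ and then lets $M \to \infty$). On $\{u > 0\}$, $Dv = (2q-1)\tilde{u}^{2q-2} Du^+$ and $\Phi D(\tilde{u}^q) \cdot D(\tilde{u}^q) = q^2\tilde{u}^{2q-2}\Phi Du^+ \cdot Du^+$, so the principal term on the left-hand side equals $\frac{2q-1}{q^2}\int_S \Phi D(\tilde{u}^q)\cdot D(\tilde{u}^q)\,dx$.

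The key obstacle is that the drift contributions $-\int u\bbb\cdot Dv$ and $-\int(\bb\cdot Du)v$, as well as $\int\ff\cdot Dv$, involve the Euclidean gradient $|Du^+|$, which is not dominated by the natural $\Phi$-energy $|Du^+|_\Phi := (\Phi Du^+ \cdot Du^+)^{1/2}$. I bridge this with the elementary bound $|\xi|^2 \leq (\tr \Phi^{-1})(\Phi\xi\cdot\xi)$ for any vector $\xi$, combined with $\tr\Phi^{-1} = \Delta\varphi/\det D^2\varphi \leq \Delta\varphi/\lambda$. Using $\norm{\ff}_{L^\infty(S)} \leq k \leq \tilde{u}$ to package the $\ff$ term into the same drift-type form, Cauchy--Schwarz and Young's inequality give, for any $\epsilon > 0$,
\begin{equation*}
\text{(drift}+\ff\text{ terms)} \leq \epsilon \int_S |D(\tilde{u}^q)|_\Phi^2\,dx + \frac{C}{\epsilon}\int_S (\Delta\varphi)\,\tilde{u}^{2q}\,dx,
\end{equation*}
while the $f$ term is estimated by writing $|f|\tilde{u}^{2q-1}\leq (|f|/k)\tilde{u}^{2q}$ and applying H\"older's inequality to obtain a bound of the form $\norm{\tilde{u}^q}_{L^{2r'}(S)}^2$ with $r' = r/(r-1)$.

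At this point Theorem \ref{intw2pest} bounds $\norm{\Delta\varphi}_{L^{1+\ee^*}(S)}$, and H\"older's inequality pushes the factor $\Delta\varphi$ onto $\tilde{u}^{2q}$, raising the exponent to $2(1+\ee^*)/\ee^*$. The hypotheses play precisely the role of keeping this exponent, and the exponent $2r'$, strictly below the Monge-Amp\`ere Sobolev exponent $p^* = 2n/(n-2)$: the condition $1+\ee^* > n/2$ is equivalent to $2(1+\ee^*)/\ee^* < p^*$, and $r > n/2$ is equivalent to $2r' < p^*$; for $n=2$, Theorem \ref{ma-sobolev} supplies any finite Sobolev exponent, so no extra integrability is needed. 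Interpolating these two exponents between $L^2$ and $L^{p^*}$ and then applying Theorem \ref{ma-sobolev} to $\tilde{u}^q \in W_0^{1,2}(S)$ lets me choose $\epsilon$ and the interpolation parameter $\delta$ (both of size a small negative power of $q$) so as to absorb every $\int |D(\tilde{u}^q)|_\Phi^2$ term into the left-hand side, producing an inequality of the shape
\begin{equation*}
\norm{\tilde{u}}_{L^{2\chi q}(S)}^{2q} \leq C(q)\,\norm{\tilde{u}}_{L^{2q}(S)}^{2q}, \qquad \chi := p^*/2 > 1,
\end{equation*}
with $C(q)$ polynomial in $q$.

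A standard geometric iteration with $q_j = \chi^j$ starting from $q_0 = 1$ then yields $\sup_S \tilde{u} \leq C \norm{\tilde{u}}_{L^2(S)}$, the constant arising from the convergent product $\prod_j C(q_j)^{1/q_j}$. Since $S$ is normalized we have $|S| \leq C(n)$, so $\norm{\tilde{u}}_{L^2(S)} \leq \norm{u^+}_{L^2(S)} + k|S|^{1/2}$ is controlled by the right-hand side of (\ref{lem1est}). The hardest step is the one already highlighted: converting the drift and $\ff$ terms, which see the Euclidean gradient, into a form compatible with the $\Phi$-energy, where the trace identity, the $W^{2,1+\ee}$ estimate, and the condition $1+\ee^* > n/2$ together provide the critical leverage.
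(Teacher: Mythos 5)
Your argument is correct and is essentially the Moser-iteration scheme used in the paper. The test function $v=\tilde u^{2q-1}-k^{2q-1}$ coincides, after the $N\to\infty$ limit, with the paper's $G(w)=\int_k^w|H'(s)|^2\,ds$ (which equals $\tfrac{\beta^2}{2\beta-1}(w^{2\beta-1}-k^{2\beta-1})$), and the load-bearing ingredients are the same: the ellipticity bound $\Phi\geq(\lambda/\Delta\varphi)I_n$ (your trace inequality $|\xi|^2\le(\tr\Phi^{-1})\,\Phi\xi\cdot\xi$ with $\tr\Phi^{-1}=\Delta\varphi/\det D^2\varphi$ is an equivalent formulation), the packaging of $\ff$ and $f$ into the $\tilde u^{2q}$ term via $k\le\tilde u$, the $W^{2,1+\ee^*}$ estimate, and the Monge--Amp\`ere Sobolev inequality. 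Where you diverge in presentation: the paper collects $\Delta\varphi(1+|\bb|^2+|\bbb|^2)/\lambda+|f|/k$ into a single function $h$ and applies H\"older once with exponent $q=\min\{1+\ee^*,r\}>n/2$, landing directly on $\norm{wH'(w)}_{L^{\widehat q}}$ with $\widehat q=\tfrac{2q}{q-1}<\widehat n$, so the iteration runs $L^{\widehat q}\to L^\infty$ and is bridged to $L^2$ only at the very end by interpolation against $L^\infty$; you instead interpolate between $L^2$ and $L^{p^*}$ at every step and re-absorb the $L^{p^*}$ term via the Sobolev inequality, which is a wasteful but correct round-trip. Your exponent bookkeeping ($1+\ee^*>n/2\iff 2(1+\ee^*)/\ee^*<p^*$ and $r>n/2\iff 2r'<p^*$) is right. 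One small slip: $\tilde u^q\notin W_0^{1,2}(S)$ since $\tilde u\ge k>0$; the Sobolev inequality must be applied to $\tilde u^q-k^q$ (equivalently, to the truncated $H(w)$ as in the paper), with the $k^q$ contribution then reabsorbed using $k\le\tilde u$ exactly as the paper does in (3.16). Also, $\delta$ in your absorption need not shrink with $q$ — only $\epsilon\sim 1/q$ does — but this does not affect the polynomial growth of $C(q)$ or the convergence of the product.
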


\begin{proof} 
	We define the test function $v$ as in Gilbarg-Trudinger \cite[Section 8.5]{GT}.
	Set 
	\begin{align*}
		k = \norm{\ff}_{L^\infty(S)} + \norm{f}_{L^r(S)}
		\text{,}
	\end{align*} 
	and for $\beta\geq 1$ and $N\geq k$, define $H\in C^1([k,\infty))$ by
	\begin{equation*}
	\begin{aligned}
		H(z) = \begin{cases} 
			z^\beta - k^\beta & \text{ if } k\leq z\leq N\text{,} \\
			\beta N^{\beta -1}(z-N) + (N^\beta-k^\beta) & \text{ if } N < z \text{.}
		\end{cases}
	\end{aligned}
	\end{equation*}
	Let $w = u^+ + k \geq k$, and define
	\begin{equation*}
	\begin{aligned}
		v = G(w) := \int_k^w |H'(s)|^2 \, ds \geq 0
		\text{.}
	\end{aligned}
	\end{equation*}
	Then, using $v\in W_0^{1,2}(S)$ as a test function in (\ref{eqn}), we get
	\begin{equation}\label{1-ide1}
	\begin{aligned}
		\int_S \Phi Du \cdot Dv \, dx + \int_S u\bbb\cdot Dv\, dx
		+ \int_S (\bb\cdot Du) v \, dx \leq
		\int_S \ff \cdot Dv \, dx + \int_S fv \, dx
		\text{.}
	\end{aligned}
	\end{equation}
	
	Note that
	\begin{enumerate} 
		\item $Dv = G'(w) Dw = H'(w)^2 Dw$,
		\item $v$ and $Dv$ are supported on $\{u\geq 0\}$, 
			and on the set $\{u>0\}=\{v>0\}$, we have $Dw = Du = Du^+$, and
		\item $H'$ is increasing on $(k,\infty)$, 
			hence $G'$ is also increasing on $(k,\infty)$.
			Thus,
			\begin{align*}
			G(w)=\int_k^w G'(s)\, ds \leq wG'(w)
			\text{.}
			\end{align*}
	\end{enumerate}
	
	Now we estimate the terms in (\ref{1-ide1}) separately.
	Note that as $\varphi$ is convex and $\det D^2\varphi>0$ by (\ref{maeqn}), 
	$D^2\varphi$ is positive definite.
	Moreover, the largest eigenvalue of $D^2\varphi$ is bounded by $\Delta\varphi$.
	Therefore, we have, in the sense of symmetric matrices,
	\begin{equation*}
	\begin{aligned}
		\Phi = (\det D^2\varphi) (D^2\varphi)^{-1} \geq \frac{\det D^2\varphi}{\Delta\varphi}I_n
		\text{.}
	\end{aligned}
	\end{equation*}
	Hence for any $\eta\in\mathbb{R}^n$, we have, by (\ref{maeqn}),
	\begin{equation}\label{phiineq}
	\begin{aligned}
		\Phi \eta \cdot \eta 
		\geq \left( \frac{\det D^2\varphi}{\Delta\varphi}\right) |\eta|^2
		\geq \left( \frac{\lambda}{\Delta\varphi}\right) |\eta|^2
		\text{.}
	\end{aligned}
	\end{equation}
	
	Using the Cauchy-Schwarz inequality and (\ref{phiineq}), we get
	\begin{equation}\label{1-ide2-1}
	\begin{aligned}
		-\int_S (\bb \cdot Du) v \, dx
		&\leq \int_S G(w) |\bb \cdot Dw | \, dx 
		\leq \int_S wG'(w) |\bb \cdot Dw| \, dx \\
		&\leq \int_S \left(G'(w) \Phi Dw \cdot Dw \right)^{1/2} 
			\left(w^2 G'(w) \frac{\Delta\varphi}{\lambda} |\bb|^2 \right)^{1/2} \, dx \\
		&\leq\frac{1}{4}\int_S G'(w)\Phi Dw\cdot Dw \, dx 
		+ \int_S w^2 G'(w) \frac{\Delta\varphi}{\lambda} |\bb|^2 \, dx
		\text{.}
	\end{aligned}
	\end{equation}
	Similarly, recalling that $Dv$ is supported on $\{u\geq 0\}$,
	we have
	\begin{equation}
	\begin{aligned}
		-\int_S u\bbb \cdot Dv\, dx
		&= -\int_S G'(w) u \bbb\cdot Dw \, dx
		\leq \int_S G'(w) w |\bbb| |Dw| \, dx \\
		&\leq \int_S \left(G'(w) \Phi Dw \cdot Dw \right)^{1/2} 
			\left(w^2 G'(w) \frac{\Delta\varphi}{\lambda} |\bbb|^2 \right)^{1/2} \, dx \\
		&\leq\frac{1}{4}\int_S G'(w)\Phi Dw\cdot Dw \, dx 
		+ \int_S w^2 G'(w) \frac{\Delta\varphi}{\lambda} |\bbb|^2 \, dx
		\text{.}
	\end{aligned}
	\end{equation}
	By the same reason, we have
	\begin{equation}
	\begin{aligned}
		\int_S \ff\cdot Dv \, dx
		&= \int_S G'(w) \ff\cdot Dw \, dx\\
		&\leq \int_S \left(G'(w) \Phi Dw\cdot Dw \right)^{1/2}
		\left(G'(w) \frac{\Delta\varphi}{\lambda} |\ff|^2 \right)^{1/2} \, dx \\
		&\leq\frac{1}{4} \int_S G'(w) \Phi Dw\cdot Dw \, dx
		+ \int_S G'(w) \frac{\Delta\varphi}{\lambda} |\ff|^2 \, dx \\
		&\leq\frac{1}{4} \int_S G'(w) \Phi Dw\cdot Dw \, dx
		+ \int_S w^2 G'(w) \frac{\Delta\varphi}{\lambda}  \, dx
		\text{,}
	\end{aligned}
	\end{equation}
	where we used $w\geq \norm{\ff}_{L^\infty(S)}$,
	and because $w\geq k$,
	\begin{equation}\label{1-ide2-2}
	\begin{aligned}
		\int_S fv \, dx 
		&\leq \int_S |f|G(w) \, dx 
		\leq \int_S |f| wG'(w) \, dx \\
		&\leq \int_S \frac{|f|}{k} w^2 G'(w) \, dx
		\text{.}
	\end{aligned}
	\end{equation}
	
	Note that 
	\begin{equation*}
	\begin{aligned}
		\int_S \Phi Du \cdot Dv \, dx
		=\int_S G'(w) \Phi Dw\cdot Dw
		\text{.}
	\end{aligned}
	\end{equation*}
	Adding (\ref{1-ide2-1})--(\ref{1-ide2-2}) and invoking (\ref{1-ide1}), we obtain
	\begin{equation*}
	\begin{aligned}
		&\int_S G'(w) \Phi Dw\cdot Dw \, dx \\
		&\leq \frac{3}{4}\int_S G'(w) \Phi Dw\cdot Dw \, dx +  
		\int_S w^2 G'(w) \left[ \frac{\Delta\varphi}{\lambda}(1+|\bb|^2+|\bbb|^2) + \frac{|f|}{k} \right] \, dx
		\text{.}
	\end{aligned}
	\end{equation*}
	Hence
	\begin{equation}\label{1-ide3}
	\begin{aligned}
		\int_S G'(w) \Phi Dw\cdot Dw \, dx  
		\leq 4\int_S w^2G'(w) h \, dx
		\text{,}
	\end{aligned}
	\end{equation}
	where 
	\begin{equation*}
	\begin{aligned}
		h = \frac{\Delta\varphi}{\lambda}(1+|\bb|^2+|\bbb|^2) + \frac{|f|}{k}
		\text{.}
	\end{aligned}
	\end{equation*}
	Before moving to the next step, we estimate $h$.
	As $S$ is normalized, $|B_1|\leq |S|\leq |B_n|$.
	Therefore, for
	\begin{align*}
		q := \min \{ 1+\ee^*, r\} > \frac{n}{2}
	\end{align*}
	we have, by the H\"older inequality,
	\begin{equation}\label{hest}
	\begin{aligned}
		\norm{h}_{L^q(S)}
		&\leq \frac{1+\norm{\bb}_{L^\infty(S)}^2+\norm{\bbb}_{L^\infty(S)}^2}{\lambda}
		\norm{\Delta\varphi}_{L^q(S)} + \frac{\norm{f}_{L^q(S)}}{k} \\
		&\leq \frac{1+\norm{\bb}_{L^\infty(S)}^2+\norm{\bbb}_{L^\infty(S)}^2}{\lambda}
		\norm{\Delta\varphi}_{L^{1+\ee^*}(S)}|S|^{\frac{1+\ee^*-q}{q(1+\ee^*)}} 
		+ \frac{\norm{f}_{L^r(S)}}{k}|S|^{\frac{r-q}{qr}}\\
		&\leq \frac{1+\norm{\bb}_{L^\infty(S)}^2+\norm{\bbb}_{L^\infty(S)}^2}{\lambda}
		\norm{\Delta\varphi}_{L^{1+\ee^*}(S)}|B_n|^{\frac{1+\ee^*-q}{q(1+\ee^*)}} 
		+ |B_n|^{\frac{r-q}{qr}}
		\text{.}
	\end{aligned}
	\end{equation}
	Then, for 
	\begin{equation}\label{qdef}
	\begin{aligned}
		\widehat{q}:= \frac{2q}{q-1}
		\text{,}
	\end{aligned}
	\end{equation}
	we have, from the H\"older inequality
	\begin{equation}\label{1-ide3-1}
	\begin{aligned}
		\int_S w^2 G'(w) h \, dx 
		&=\int_S (wH'(w))^2 h \, dx \\
		&\leq \norm{h}_{L^q(S)} \norm{(wH'(w))^2}_{L^{\frac{q}{q-1}}(S)} \\
		&= \norm{h}_{L^q(S)} \norm{wH'(w)}_{L^{\widehat{q}}(S)}^2 
		\text{.}
	\end{aligned}
	\end{equation}
	As $u\leq 0$ on $\partial S$, $H(w)=0$ on $\partial S$ and
	the Monge-Amp\`ere Sobolev inequality, Theorem \ref{ma-sobolev}, implies
	\begin{equation}\label{1-ide3-2}
	\begin{aligned}
		\int_S G'(w) \Phi Dw \cdot Dw \, dx
		&= \int_S H'(w)^2 \Phi Dw \cdot Dw \, dx \\
		&= \int_S \Phi DH(w) \cdot DH(w) \, dx \\
		&\geq c_1(q, n,\lambda, \Lambda) \norm{H(w)}_{L^{\widehat{n}}(S)}^2
		\text{,}
	\end{aligned}
	\end{equation}
	where
	\begin{equation}\label{ndef}
	\begin{aligned}
		\widehat{n} = \begin{cases} 
			\frac{2n}{n-2} &\quad\text{if } n \geq 3 \text{,}\\
			2\widehat{q} &\quad\text{if } n = 2 \text{.}
		\end{cases}
	\end{aligned}
	\end{equation}
	Note that as $q > n/2$, we have $\widehat{n} > \widehat{q}$.
	From (\ref{1-ide3}), (\ref{1-ide3-1}), and (\ref{1-ide3-2}), we have
	\begin{equation}\label{1-ide4}
	\begin{aligned}
		\norm{H(w)}_{L^{\widehat{n}}(S)} \leq
		C_2(q, n,\lambda,\Lambda) \norm{h}_{L^q(S)}^{1/2} \norm{wH'(w)}_{L^{\widehat{q}}(S)}
		\text{.}
	\end{aligned}
	\end{equation}
	Letting $N\rightarrow\infty$, the terms in (\ref{1-ide4}) converge to 
	\begin{equation}\label{1-ide4-1}
	\begin{aligned}
		&\norm{H(w)}_{L^{\widehat{n}}(S)}\rightarrow \norm{w^\beta-k^\beta}_{L^{\widehat{n}}(S)}
		\text{,}\\
		&\norm{wH'(w)}_{L^{\widehat{q}}(S)}\rightarrow\norm{\beta w^\beta}_{L^{\widehat{q}}(S)}
		\text{.}
	\end{aligned}
	\end{equation}
	We also have
	\begin{equation}\label{1-ide4-2}
	\begin{aligned}
		\norm{k^\beta}_{L^{\widehat{n}}(S)}
		&= k^\beta |S|^{1/\widehat{n}}
		= |S|^{1/\widehat{n} - 1/\widehat{q}} \norm{k^\beta}_{L^{\widehat{q}}(S)}\\
		&\leq |B_1|^{1/\widehat{n}-1/\widehat{q}} \norm{k^\beta}_{L^{\widehat{q}}(S)} \\
		&\leq |B_1|^{1/\widehat{n}-1/\widehat{q}} \norm{w^\beta}_{L^{\widehat{q}}(S)}
		\text{.}
	\end{aligned}
	\end{equation}
	Because $\beta\geq 1$, from (\ref{1-ide4})--(\ref{1-ide4-2}) and (\ref{hest}),
	we conclude that
	\begin{equation}\label{1-ide5}
	\begin{aligned}
		&\norm{w^\beta}_{L^{\widehat{n}}(S)}
		\leq \widetilde{C} \beta \norm{w^\beta}_{L^{\widehat{q}}(S)}
		\text{,}
	\end{aligned}
	\end{equation}
	where
	\begin{equation*}
	\begin{aligned}
		\widetilde{C} &= C_2(n,\lambda,\Lambda, q)\norm{h}_{L^q(S)}^{1/2} + |B_1|^{1/\widehat{n}-1/\widehat{q}}\\
		&\leq\widetilde{C}(n,\lambda,\Lambda,r,\ee^*,
		\norm{\bb}_{L^\infty(S)}, \norm{\bbb}_{L^\infty(S)},
		\norm{D^2\varphi}_{L^{1+\ee^*}(S)})
		\text{.}
	\end{aligned}
	\end{equation*}
	Note that $\widetilde{C}$ is independent of $\beta$.
	
	We define
	\begin{equation*}
	\begin{aligned}
		\chi := \frac{\widehat{n}}{\widehat{q}} > 1 
		\text{,}
	\end{aligned}
	\end{equation*}
	and rewrite (\ref{1-ide5}) as
	\begin{equation}\label{1-ide6}
	\begin{aligned}
		\norm{w}_{L^{\beta\chi\widehat{q}}(S)}
		\leq (\widetilde{C}\beta)^{1/\beta} \norm{w}_{L^{\beta\widehat{q}}(S)}
		\text{.}
	\end{aligned}
	\end{equation}
	Setting $\beta=\chi^m \geq 1$ (for integer $m\geq 0$) in (\ref{1-ide6}), we get
	\begin{equation}\label{1-ide7}
	\begin{aligned}
		\norm{w}_{L^{\chi^{m+1}\widehat{q}}(S)}
		\leq \widetilde{C}^{\chi^{-m}} \chi^{m \chi^{-m}}
		\norm{w}_{L^{\chi^m\widehat{q}}(S)}
		\text{.}
	\end{aligned}
	\end{equation}
	Iterating (\ref{1-ide7}) yields
	\begin{equation}\label{1-ide8}
	\begin{aligned}
		\norm{w}_{L^\infty(S)}
		\leq \widetilde{C}^{\sum_{m\geq 0} \chi^{-m}} \chi^{\sum_{m\geq 0} m\chi^{-m}}
		\norm{w}_{L^{\widehat{q}}(S)}
		\text{.}
	\end{aligned}
	\end{equation}
	Because $w\geq u^+ \geq 0$ and
	\begin{equation*}
	\begin{aligned}
		\norm{w}_{L^{\widehat{q}}(S)}
		\leq \norm{w}_{L^\infty(S)}^{1-2/\widehat{q}}
		\norm{w}_{L^2(S)}^{2/\widehat{q}}
		\text{,}
	\end{aligned}
	\end{equation*}
	(\ref{1-ide8}) gives
	\begin{equation*}
	\begin{aligned}
		\sup_S u^+ \leq 
		\norm{w}_{L^\infty(S)} 
		&\leq C\norm{w}_{L^2(S)}
		\leq C(k + \norm{u^+}_{L^2(S)}) \\
		&= C\left(\norm{\ff}_{L^\infty(S)} + \norm{f}_{L^r(S)} + \norm{u^+}_{L^2(S)}\right)
		\text{,}
	\end{aligned}
	\end{equation*}
	where 
	\begin{equation*}
	\begin{aligned}
		C = C(n,\lambda,\Lambda, r, \ee^*, \norm{\bb}_{L^\infty(S)}, 
		\norm{\bbb}_{L^\infty(S)}, \norm{D^2\varphi}_{L^{1+\ee^*}(S)})
		\text{.}
	\end{aligned}
	\end{equation*}
	This completes the proof.
\end{proof}

Note that the $L^2$ norm of $u^+$ appears on the right-hand side of (\ref{lem1est}).
We will use a trick in Gilbarg-Trudinger \cite[Section 8.5]{GT} 
to eliminate this term.
We first prove the following lemma.

\begin{lemma}\label{lemma2}
	Let $\varphi\in C^3(\Omega)$ be a convex function satisfying (\ref{maeqn}).
	Suppose $\ff, \bbb, \bb\in L_{\mathrm{loc}}^\infty(\Omega;\mathbb{R}^n)
	\cap W_{\mathrm{loc}}^{1,n}(\Omega;\mathbb{R}^n)$, 
	$f\in L_{\mathrm{loc}}^n(\Omega)$, $n/2<r\leq n$, and $\dv \bbb \leq 0$. 
	Suppose $S = S_\varphi(x,t)$ is a normalized section and $S_\varphi(x,2t)\Subset\Omega$.
	Suppose $u \in W^{1,2}(S)
	\cap C(\overline{S})$ is a subsolution to (\ref{eqn}) in $S$ satisfying
	$u \leq 0$ on $\partial S$.
	Assume that
	\begin{enumerate}
		\item either $n= 2$, or 
		\item $n\geq 3$ and $\ee^*(n,\lambda,\Lambda) + 1 > \frac{n}{2}$,
		where $\ee^*$ is as in Theorem \ref{intw2pest}.
	\end{enumerate}
	Then, 
	\begin{equation}\label{wdef}
	\begin{aligned}
		w = \log \frac{M+k}{M+k-u^{+}}
		\quad\text{where }
		M = \sup_S u^+
		\quad\text{and }
		k = \norm{f}_{L^r(S)} + \norm{\ff}_{L^\infty(S)}
		\text{,}
	\end{aligned}
	\end{equation}
	satisfies
	\begin{equation}
	\begin{aligned}
		\norm{w}_{L^2(S)}
		\leq C(n, \lambda, \Lambda, r, \ee^*, \norm{\bb}_{L^\infty(S)}, 
		\norm{\bbb}_{L^\infty(S)},\norm{D^2\varphi}_{L^{1+\ee^*}(S)})
		\text{.}
	\end{aligned}
	\end{equation}
\end{lemma}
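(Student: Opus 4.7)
The approach follows the log-estimate argument of Gilbarg--Trudinger \cite[Section 8.5]{GT}, adapted to the linearized Monge-Amp\`ere setting with drift and crucially exploiting the sign hypothesis $\dv\bbb\leq 0$ via integration by parts. As test function I take
\begin{equation*}
v=\frac{1}{M+k-u^+}-\frac{1}{M+k}=\frac{u^+}{(M+k)(M+k-u^+)}\geq 0,
\end{equation*}
which belongs to $W_0^{1,2}(S)$ because $u^+\in W_0^{1,2}(S)$ (since $u\leq 0$ on $\partial S$) and $v$ is a smooth bounded function of $u^+$ on $[0,M]$ (using $M+k-u^+\geq k>0$). A direct computation gives $Dv=Du^+/(M+k-u^+)^2=Dw/(M+k-u^+)$, supported on $\{u>0\}$, where $u=u^+$.

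Plugging $v$ into the subsolution inequality and using the identity $u^+Du^+/(M+k-u^+)^2=(e^w-1)Dw$ yields
\begin{equation*}
\int_S \Phi Dw\cdot Dw\,dx+\int_S(e^w-1)\bbb\cdot Dw\,dx+\int_S\tfrac{u^+}{M+k}\bb\cdot Dw\,dx\leq \int_S\tfrac{\ff\cdot Dw}{M+k-u^+}\,dx+\int_S\tfrac{fu^+}{(M+k)(M+k-u^+)}\,dx.
\end{equation*}
The crucial observation is that $(e^w-1)Dw=D\Psi(w)$ with $\Psi(t):=e^t-1-t\geq 0$, and $\Psi(w)=0$ on $\partial S$ since $w=0$ there (by continuity of $u$). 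Integration by parts then gives
\begin{equation*}
\int_S(e^w-1)\bbb\cdot Dw\,dx=-\int_S\Psi(w)\,\dv\bbb\,dx\geq 0,
\end{equation*}
so this drift term can be dropped from the left-hand side. The remaining $\bb$- and $\ff$-terms are controlled by Cauchy-Schwarz combined with the lower bound $\Phi\geq(\lambda/\Delta\varphi)I_n$ from \eqref{phiineq}, using $u^+/(M+k)\leq 1$ and $|\ff|/(M+k-u^+)\leq\norm{\ff}_{L^\infty}/k\leq 1$, while the $f$-term is bounded by $\int_S|f|/k\,dx\leq|S|^{1-1/r}\norm{f}_{L^r}/k\leq|S|^{1-1/r}$. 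After absorbing two copies of $\tfrac14\int_S\Phi Dw\cdot Dw\,dx$ into the left and invoking Theorem \ref{intw2pest} together with H\"older's inequality to control $\int_S\Delta\varphi\,dx$ in terms of $\norm{D^2\varphi}_{L^{1+\ee^*}(S)}$, we arrive at $\int_S\Phi Dw\cdot Dw\,dx\leq C$ for the constant $C$ of the form stated.

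Since $w\in W_0^{1,2}(S)\cap L^\infty(S)$, the Monge-Amp\`ere Sobolev inequality (Theorem \ref{ma-sobolev}) yields $\norm{w}_{L^p(S)}^2\leq C\int_S\Phi Dw\cdot Dw\,dx\leq C$ for some $p>2$; since $|S|\leq|B_n|$, H\"older gives the desired $L^2$ bound. The main obstacle is the $\bbb$-term: the coefficient $u^+/(M+k-u^+)$ is only bounded by $M/k$, which depends on $u$, so no naive absorption or $L^\infty$-estimate works. The exponential form of $w$ in \eqref{wdef} is chosen precisely to convert this coefficient into $e^w-1$, producing the exact derivative $D\Psi(w)$ on which the sign hypothesis $\dv\bbb\leq 0$ can be exploited via integration by parts; this is the single place in the argument where the hypothesis $\dv\bbb\leq 0$ is essential.
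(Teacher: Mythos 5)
Your proposal is correct, and it follows the same Moser-type log estimate that the paper uses: both test the subsolution inequality with a function proportional to $u^+/(M+k-u^+)$, bound the $\bb$- and $\ff$-terms via Cauchy--Schwarz against $\Phi\geq(\lambda/\Delta\varphi)I_n$ as in (\ref{phiineq}), estimate the $f$-term by $|S|^{(r-1)/r}$, and finish with the Monge--Amp\`ere Sobolev inequality on the normalized section. The one place where you organize things differently is the treatment of the $\bbb$-drift term. The paper writes $\int_S u\bbb\cdot Dv\,dx=\int_S\bbb\cdot D(uv)\,dx-\int_S v\,\bbb\cdot Du\,dx$, integrates by parts on $D(uv)$ using $uv\geq 0$ and $\dv\bbb\leq 0$ to discard that piece, and then absorbs the residual $-\int_S v\,\bbb\cdot Du\,dx$ jointly with the $\bb$-term via Cauchy--Schwarz (this is (2-ide3-3) with the factor $|\bb-\bbb|^2$). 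You instead note that the entire $\bbb$-contribution equals $\int_S\bbb\cdot D\Psi(w)\,dx$ with $\Psi(t)=e^t-1-t\geq 0$ and $\Psi(w)\in W^{1,2}_0(S)$, so a single integration by parts and $\dv\bbb\leq 0$ eliminate it outright. Both routes exploit $\dv\bbb\leq 0$ through integration by parts against a nonnegative function vanishing on $\partial S$; yours is a slick reorganization that removes the $\bbb$-dependence of the constant at that step, but it is the same argument in substance, so I would not call it a genuinely different proof.
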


\begin{proof} 
	Set
	\begin{equation*}
	\begin{aligned}
		v:= \frac{u^+}{M+k-u^+}
		\text{.}
	\end{aligned}
	\end{equation*}
	Then $v\geq 0$, and $v\in W_0^{1,2}(S)\cap C(\overline{S})$.
	Because $u$ is a subsolution to (\ref{eqn}), we get
	\begin{equation*}
	\begin{aligned}
		\int_S \Phi Du \cdot Dv \, dx + \int_S u\bbb\cdot Dv\, dx
		+ \int_S (\bb\cdot Du) v \, dx \leq
		\int_S \ff \cdot Dv \, dx + \int_S fv \, dx
		\text{.}
	\end{aligned}
	\end{equation*}
	As $\dv\bbb\leq 0$ and $uv\geq 0$,
	\begin{equation*}
	\begin{aligned}
		\int_S u\bbb\cdot Dv\, dx 
		=\int_S \bbb\cdot D(uv)\, dx - \int_S v \bbb\cdot Du \, dx
		\geq -\int_S v\bbb\cdot Du \, dx
	\end{aligned}
	\end{equation*}
	and therefore, we have
	\begin{equation}\label{2-ide1}
	\begin{aligned}
		\int_S \Phi Du \cdot Dv \, dx 
		+ \int_S ((\bb-\bbb)\cdot Du) v \, dx \leq
		\int_S \ff \cdot Dv \, dx + \int_S fv \, dx
		\text{.}
	\end{aligned}
	\end{equation}
	Because
	\begin{equation}\label{2-ide2}
	\begin{aligned}
		Dv = \frac{M+k}{(M+k-u^+)^2} Du^+
		\text{,}
	\end{aligned}
	\end{equation}
	the left-hand side of (\ref{2-ide1}) becomes
	\begin{equation}\label{2-ide1-1}
	\begin{aligned}
		&\int_S \Phi Du \cdot Dv \, dx 
		+ \int_S ((\bb-\bbb)\cdot Du) v \, dx \\
		&=\int_S \frac{M+k}{(M+k-u^+)^2} \Phi Du^+\cdot Du^+ \, dx
		+ \int_S \frac{u^+ (\bb-\bbb)\cdot Du^+}{M+k-u^+} \, dx 
		\text{.}
	\end{aligned}
	\end{equation}
	We may also use (\ref{2-ide2}) to substitute $Dv$ in the right-hand side of (\ref{2-ide1}) to obtain
	\begin{equation}\label{2-ide1-2}
	\begin{aligned}
		\int_S \ff \cdot Dv \, dx + \int_S fv \, dx
		= 
		\int_S \frac{(M+k)\ff\cdot Du^+}{(M+k-u^+)^2} \, dx
		+ \int_S \frac{f u^+}{M+k-u^+} \, dx
		\text{.}
	\end{aligned}
	\end{equation}
	Putting (\ref{2-ide1}), (\ref{2-ide1-1}), and (\ref{2-ide1-2}) together, and dividing both sides by $M+k$, 
	we find
	\begin{equation}\label{2-ide3}
	\begin{aligned}
		\int_S \frac{\Phi Du^+\cdot Du^+}{(M+k-u^+)^2}\, dx
		\leq \int_S \frac{\ff\cdot Du^+}{(M+k-u^+)^2}\, dx +
		\int_S \frac{fu^+ +u^+(\bbb-\bb)\cdot Du^+}{(M+k)(M+k-u^+)} \, dx
		\text{.}
	\end{aligned}
	\end{equation}
	
	Now we estimate the terms in (\ref{2-ide3}) separately.
	First, from the Cauchy-Schwarz inequality and (\ref{phiineq}), we have
	\begin{equation}\label{2-ide3-1}
	\begin{aligned}
		\int_S \frac{\ff\cdot Du^+}{(M+k-u^+)^2}\, dx 
		&\leq
		\int_S \frac{(\Phi Du^+ \cdot Du^+)^{1/2} (\lambda^{-1} \Delta\varphi |\ff|^2)^{1/2}}{(M+k-u^+)^2}\, dx \\
		&\leq \frac{1}{4}\int_S \frac{\Phi Du^+ \cdot Du^+}{(M+k-u^+)^2}\, dx
		+ \int_S \frac{\lambda^{-1} \Delta\varphi |\ff|^2}{(M+k-u^+)^2}\, dx \\
		&\leq \frac{1}{4}\int_S \frac{\Phi Du^+ \cdot Du^+}{(M+k-u^+)^2}\, dx
		+ \int_S \lambda^{-1} \Delta\varphi \, dx
	\end{aligned}
	\end{equation}
	as $M+k-u^+ \geq k \geq \norm{\ff}_{L^\infty(S)}$.
	Next, using $k\geq \norm{f}_{L^r(S)}$ and the H\"older inequality, we estimate
	\begin{equation}\label{2-ide3-2}
	\begin{aligned}
		\int_S \frac{fu^+}{(M+k-u^+)(M+k)} \, dx
		&\leq \int_S \frac{|f|}{k} \times 1 \,dx
		\leq \norm{\frac{f}{k}}_{L^r(S)} \norm{1}_{L^{r/(r-1)}(S)} \\
		&\leq |S|^{\frac{r-1}{r}}
		\leq |B_n|^{\frac{r-1}{r}}
		\text{.}
	\end{aligned}
	\end{equation}
	Finally, from Cauchy-Schwarz inequality and (\ref{phiineq}), we estimate
	\begin{equation}\label{2-ide3-3}
	\begin{aligned}
		\int_S \frac{u^+ (\bbb-\bb) \cdot Du^+}{(M+k)(M+k-u^+)} \, dx
		&\leq \int_S |\bb-\bbb| \left| \frac{Du^+}{M+k-u^+} \right| \, dx \\
		&\leq \int_S \left\{ \frac{\Phi Du^+\cdot Du^+}{(M+k-u^+)^2} \right\}^{1/2} 
		\left\{ \frac{\Delta\varphi}{\lambda} |\bb-\bbb|^2 \right\}^{1/2} \, dx \\
		&\leq \frac{1}{4}\int_S \frac{\Phi Du^+ \cdot Du^+}{(M+k-u^+)^2}\, dx
		+ \int_S \frac{\Delta\varphi}{\lambda} |\bb-\bbb|^2 \, dx
		\text{.}
	\end{aligned}
	\end{equation}

	Combining (\ref{2-ide3}) with (\ref{2-ide3-1})--(\ref{2-ide3-3}) yields
	\begin{equation}\label{2-ide4}
	\begin{aligned}
		\frac{1}{2}\int_S \frac{\Phi Du^+\cdot Du^+}{(M+k-u^+)^2} \,dx
		&\leq |B_n|^{\frac{r-1}{r}} + 
		\frac{1+\left(\norm{\bb}_{L^\infty(S)}+\norm{\bbb}_{L^\infty(S)}\right)^2}{\lambda} \int_S \Delta\varphi\,dx \\
		&\leq C_0(n, r, \ee^*, 
		\lambda, \Lambda, \norm{\bb}_{L^\infty(S)}, \norm{\bbb}_{L^\infty(S)}, 
		\norm{D^2\varphi}_{L^{1+\ee^*}(S)})
		\text{.}
	\end{aligned}
	\end{equation}
	
	As $u\leq 0$ on $\partial S$, $w = 0$ on $\partial S$.
	Also, we have
	\begin{equation}\label{dw}
	\begin{aligned}
		Dw = \frac{Du^+}{M+k-u^+}
		\text{.}
	\end{aligned}
	\end{equation}
	Therefore, the left-hand side of (\ref{2-ide4}) can be estimated 
	using the H\"older inequality and 
	the Monge-Am\`pere Sobolev inequality in Theorem \ref{ma-sobolev}:
	\begin{equation}\label{2-ide4-1}
	\begin{aligned}
		&\frac{1}{2}\int_S \frac{\Phi Du^+\cdot Du^+}{(M+k-u^+)^2} \,dx
		=\frac{1}{2}\int_S \Phi Dw\cdot Dw \, dx \\
		&\geq
		\begin{cases}
		c_1 \norm{w}_{L^{\frac{2n}{n-2}}(S)}^2 
		\geq c_1 |S|^{-2/n} \norm{w}_{L^2(S)}^2
		\geq c_1 |B_n|^{-2/n} \norm{w}_{L^2(S)}^2
		&\text{ if } n\geq 3\text{,}\\
		c_1 \norm{w}_{L^4(S)}^2 
		\geq c_1 |B_n|^{-1/2} \norm{w}_{L^2(S)}^2
		&\text{ if } n=2\text{,}
		\end{cases}
	\end{aligned}
	\end{equation}
	where $c_1=c_1(n,\lambda,\Lambda)$.
	The conclusion of the lemma follows from (\ref{2-ide4}) and (\ref{2-ide4-1}).
\end{proof}

Now we obtain the following global $L^\infty$ estimate, 
independent of the $L^2$ norm of the solution $u$,
by showing that $w$ in (\ref{wdef}) is a subsolution to an equation of the same form as (\ref{eqn}).

\begin{lemma}\label{lemma3}
	Let $\varphi\in C^3(\Omega)$ be a convex function satisfying (\ref{maeqn}).
	Suppose $\ff, \bbb, \bb\in L_{\mathrm{loc}}^\infty(\Omega;\mathbb{R}^n)
	\cap W_{\mathrm{loc}}^{1,n}(\Omega;\mathbb{R}^n)$, 
	$f\in L_{\mathrm{loc}}^n(\Omega)$, $n/2<r\leq n$, and $\dv\bbb\leq 0$.
	Suppose $S = S_\varphi(x,t)$ is a normalized section and $S_\varphi(x,2t)\Subset\Omega$.
	Suppose $u \in W^{1,2}(S)
	\cap C(\overline{S})$ is a subsolution to (\ref{eqn}) in $S$ satisfying
	$u \leq 0$ on $\partial S$.
	Assume that 
	\begin{enumerate}
		\item either $n= 2$, or 
		\item $n\geq 3$ and $\ee^*(n,\lambda,\Lambda) + 1 > \frac{n}{2}$,
		where $\ee^*$ is from Theorem \ref{intw2pest}.
	\end{enumerate}
	Then,
	\begin{equation}
	\begin{aligned}
		\sup_S u^+ \leq C\left(\norm{\ff}_{L^\infty(S)} + \norm{f}_{L^r(S)} \right)
		\text{,}
	\end{aligned}
	\end{equation}
	where 
	\begin{equation*}
	\begin{aligned}
		C = C(n,\lambda,\Lambda, r, \ee^*, \norm{\bb}_{L^\infty(S)}, 
		\norm{\bbb}_{L^\infty(S)}, \norm{D^2\varphi}_{L^{1+\ee^*}(S)})
		\text{.}
	\end{aligned}
	\end{equation*}
\end{lemma}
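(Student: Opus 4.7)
The plan is to show that the nonnegative function
\[
w = \log\frac{M+k}{M+k-u^+}
\]
defined in \eqref{wdef} is itself a subsolution to an equation of the form \eqref{eqn}, but with new data whose relevant norms are bounded by constants independent of $M=\sup_S u^+$. Once this is in hand, I will apply Lemma \ref{lemma1} to $w$ (which vanishes on $\partial S$ since $u\leq 0$ there forces $u^+=0$) to obtain
\[
\sup_S w \;\leq\; C\bigl(\norm{\widetilde\ff}_{L^\infty(S)}+\norm{\widetilde f}_{L^r(S)}+\norm{w}_{L^2(S)}\bigr),
\]
and combine this with the universal $L^2$ bound on $w$ provided by Lemma \ref{lemma2} to conclude $\sup_S w\leq C_0$. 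Unwinding the logarithm at a point where $u^+$ attains its supremum gives $\log(1+M/k)\leq C_0$, hence $M\leq(e^{C_0}-1)k$, which is the desired estimate.

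The key observation driving the derivation of the PDE for $w$ is the identity $G''(s)=G'(s)^2$ for $G(s)=\log\frac{M+k}{M+k-s}$. Formally differentiating $w=G(u^+)$ and invoking the nondivergence form of \eqref{eqn} (valid since $\dv\Phi=0$) yields, on $\{u>0\}$,
\[
-\dv(\Phi Dw) + \widetilde\bb\cdot Dw + \Phi Dw\cdot Dw \;=\; \widetilde f - \dv\widetilde\ff + \tfrac{u^+}{M+k-u^+}\,\dv\bbb,
\]
where $\widetilde\bb:=\bb-\bbb-\ff/(M+k-u^+)$, $\widetilde\ff:=\ff/(M+k-u^+)$, and $\widetilde f:=f/(M+k-u^+)$. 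The quadratic term $\Phi Dw\cdot Dw$ is nonnegative, and the last term on the right is nonpositive by the hypotheses $\dv\bbb\leq 0$ and $u^+\geq 0$, so both can be discarded to give the weak subsolution inequality
\[
-\dv(\Phi Dw)+\widetilde\bb\cdot Dw \;\leq\; \widetilde f-\dv\widetilde\ff,
\]
which is of the form \eqref{eqn} with $\widetilde\bbb=0$. Crucially, $M+k-u^+\geq k=\norm{\ff}_{L^\infty(S)}+\norm{f}_{L^r(S)}$, so $\norm{\widetilde\ff}_{L^\infty(S)}\leq 1$, $\norm{\widetilde f}_{L^r(S)}\leq 1$, and $\norm{\widetilde\bb}_{L^\infty(S)}\leq\norm{\bb}_{L^\infty(S)}+\norm{\bbb}_{L^\infty(S)}+1$, all independent of the unknown $M$. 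This is the mechanism that breaks the dependence on $\norm{u^+}_{L^2(S)}$ in Lemma \ref{lemma1}.

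The main technical obstacle will be making this chain-rule computation rigorous at the level of weak subsolutions. I plan to do so by testing the weak subsolution formulation of $u$ against $\psi=G'(u^+)\eta$ for nonnegative $\eta\in W_0^{1,2}(S)$; this is admissible because $k>0$ forces $G'(u^+)=1/(M+k-u^+)$ to lie in $[1/(M+k),1/k]$ with Lipschitz dependence on $u^+$, so $\psi\in W_0^{1,2}(S)$ with $\psi\geq 0$. Expanding $D\psi=G''(u^+)Du^+\eta+G'(u^+)D\eta$ and using the identity $G''=(G')^2$ together with $Du\cdot Du^+=|Du^+|^2$ a.e. produces the term $\int_S\Phi Dw\cdot Dw\,\eta\,dx$, an integration by parts recasts the $\ff$-contribution in the form $\widetilde\ff\cdot D\eta$ plus $-\widetilde\ff\cdot Dw\,\eta$ (absorbed into $\widetilde\bb$), and the contributions from $\{u\leq 0\}$ are controlled using that $Dw=0$ and $u^+=0$ there. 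After this yields the weak subsolution inequality for $w$, applying Lemma \ref{lemma1} with $(\widetilde\ff,\widetilde f,\widetilde\bb,\widetilde\bbb=0)$ and then Lemma \ref{lemma2} to bound $\norm{w}_{L^2(S)}$, and finally unwinding the logarithm, completes the proof.
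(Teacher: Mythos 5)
Your proposal is correct and is essentially the paper's argument: show that $w=\log\frac{M+k}{M+k-u^+}$ satisfies a subsolution inequality of the same form as \eqref{eqn} but with renormalized data whose relevant norms are independent of $M=\sup_S u^+$, apply the global bound of Lemma \ref{lemma1} to $w$, feed in the universal $L^2$ bound for $w$ from Lemma \ref{lemma2}, and unwind $\sup_S w=\log\frac{M+k}{k}$.

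The one place you diverge from the paper is how the $\ff$-contribution is treated. You move $\ff/(M+k-u^+)$ into the drift coefficient, taking $\widetilde\bb=\bb-\bbb-\ff/(M+k-u^+)$, and then simply discard the nonnegative quadratic term $\Phi Dw\cdot Dw$, so that $\widetilde f=f/(M+k-u^+)$ satisfies $\|\widetilde f\|_{L^r(S)}\le 1$ directly. The paper instead keeps $\widetilde\bb=\bb-\bbb$ and uses the Cauchy--Schwarz inequality together with $\Phi\ge(\lambda/\Delta\varphi)I_n$ to absorb $\ff\cdot Dw$ into $\Phi Dw\cdot Dw$, which forces an extra $\Delta\varphi/(4\lambda)$ into $\widetilde f$, and in turn requires downgrading to $\widetilde r=\min\{r,1+\ee^*\}$. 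Both routes produce coefficients bounded solely in terms of $\norm{\bb}_{L^\infty}$, $\norm{\bbb}_{L^\infty}$, and $\norm{D^2\varphi}_{L^{1+\ee^*}}$ and give the same conclusion; yours is marginally tidier on this point since $\Delta\varphi$ never enters the inhomogeneous term.

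One caveat on your rigorization plan: testing the weak inequality for $u$ against $\psi=G'(u^+)\eta$ does not quite produce the weak subsolution inequality for $w$, because $G'(u^+)=G'(0)>0$ on $\{u\le0\}$ so the term $\int G'(u^+)\Phi Du\cdot D\eta$ picks up a contribution from $\{u<0\}$ where $Du\ne Du^+$; the natural correction $\psi=[G'(u^+)-G'(0)]\eta$ kills that but then misses a $G'(0)\Phi Du^+\cdot D\eta$ piece. The paper glosses over the same point (it derives the inequality for $w$ pointwise and asserts it in the weak sense), so this does not represent a gap relative to the paper, but the test-function choice as written would need another look before it carries full weight.
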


\begin{proof}
	Let $w\in W_0^{1,2}(S)$ be as in (\ref{wdef}). 
	Then, using (\ref{dw}), we get
	\begin{equation}\label{3-ide1}
	\begin{aligned}
		-\bbb\cdot Dw
		&=-\frac{\bbb\cdot Du^+}{M+k-u^+} \\
		&= \frac{-\dv(\bbb u^+) + u^+\dv\bbb}{M+k-u^+}
		\leq \frac{-\dv(\bbb u^+)}{M+k-u^+}
	\end{aligned}
	\end{equation}
	as $\dv \bbb \leq 0$.
	We have in the weak sense,
	\begin{equation}\label{3-ide1-1}
	\begin{aligned}
		-\dv (\Phi Dw)=-\frac{\dv(\Phi Du^+)}{M+k-u^+}-\frac{\Phi Du^+\cdot Du^+}{(M+k-u^+)^2}
		\text{.}
	\end{aligned}
	\end{equation}

	From (\ref{dw}), (\ref{3-ide1}), and (\ref{3-ide1-1}), we get
	\begin{equation*}
	\begin{aligned}
		-\dv(\Phi Dw)+ (\bb-\bbb)\cdot Dw
		\leq \frac{-\dv(\Phi Du^+ +u^+\bbb)+\bb\cdot Du^+}{M+k-u^+}
		-\frac{\Phi Du^+\cdot Du^+}{(M+k-u^+)^2}
		\text{.}
	\end{aligned}
	\end{equation*}
	Combining this with (\ref{eqn}), we get
	\begin{equation*}
	\begin{aligned}
		&-\dv(\Phi Dw)+ (\bb-\bbb)\cdot Dw \\
		&\leq\frac{f-\dv\ff}{M+k-u^+}-\frac{\Phi Du^+\cdot Du^+}{(M+k-u^+)^2}\\
		&=-\dv\left(\frac{\ff}{M+k-u^+}\right)+\frac{f}{M+k-u^+}
		+\left(\frac{-\Phi Du^+\cdot Du^+ + \ff\cdot Du^+}{(M+k-u^+)^2}\right)
		\text{\quad in } \{u\geq 0\}\text{.}
	\end{aligned}
	\end{equation*}
	From (\ref{phiineq}) and the Cauchy-Schwarz inequality, we have in $S$
	\begin{equation*}
	\begin{aligned}
		-\Phi Du^+\cdot Du^+ +\ff\cdot Du^+
		&\leq -\frac{\lambda}{\Delta\varphi}|Du^+|^2 + \ff\cdot Du^+\\
		&\leq \frac{\Delta\varphi |\ff|^2}{4\lambda} \\
		&\leq \frac{\Delta\varphi (M+k-u^+)^2}{4\lambda} 
		\text{,}
	\end{aligned}
	\end{equation*}
	which implies
	\begin{equation*}
	\begin{aligned}
		-\dv(\Phi Dw)+ (\bb-\bbb)\cdot Dw 
		\leq -\dv\left(\frac{\ff}{M+k-u^+}\right)
		+\frac{f}{M+k-u^+}+\frac{\Delta\varphi}{4\lambda}
		\text{\quad in } \{u\geq 0\}\text{.}
	\end{aligned}
	\end{equation*}
	As $w=0$ outside $\{u\geq 0\}$, $w$ is a subsolution to
	\begin{equation}\label{3-ide2-1}
	\begin{aligned}
		-\dv(\Phi Dw)+\widetilde{\bb}\cdot Dw
		\leq -\dv\widetilde{\ff}+\widetilde{f}
		\quad\text{in }S
		\text{,}
	\end{aligned}
	\end{equation}
	where
	\begin{equation}
	\begin{aligned}
		\widetilde{\bb} &= \bb-\bbb \text{,}\\
		\widetilde{\ff} &= \frac{\ff}{M+k-u^+}\chi_{\{u\geq 0\}} \text{,}\quad\text{and}\\
		\widetilde{f} &= \left(\frac{f}{M+k-u^+} + \frac{\Delta\varphi}{4\lambda}\right)\chi_{\{u\geq 0\}}
		\text{.}
	\end{aligned}
	\end{equation}
	Recalling that $k=\norm{\ff}_{L^\infty(S)}+\norm{f}_{L^r(S)}$
	and $M=\sup_S u^+ \geq u^+$,
	we obtain
	\begin{equation}
	\begin{aligned}
		&\norm{\widetilde{\bb}}_{L^\infty(S)}
		\leq \norm{\bb}_{L^\infty(S)} +\norm{\bbb}_{L^\infty(S)}
		\text{,}\quad\text{and}\quad
		\norm{\widetilde{\ff}}_{L^\infty(S)} \leq 1
		\text{.}
	\end{aligned}
	\end{equation}
	For $\widetilde{r}:=\min\{r,1+\ee^*\}>n/2$, 
	using the H\"older inequality and the volume estimate 
	in Lemma \ref{secvolest}, we have	
	\begin{equation}\label{3-ide2-0}
	\begin{aligned}
		\norm{\widetilde{f}}_{L^{\widetilde{r}}(S)}
		&\leq \frac{\norm{f}_{L^{\widetilde{r}}(S)}}{k}
		+\frac{\norm{\Delta\varphi}_{L^{\widetilde{r}}(S)}}{4\lambda}\\
		&\leq C_1(n,r,\ee^*)\left(
			\frac{\norm{f}_{L^r(S)}}{k}+\frac{\norm{D^2\varphi}_{L^{1+\ee^*}(S)}}{4\lambda}
			\right) \\
		&\leq C_1(n,r,\ee^*)\left(
			1+\frac{\norm{D^2\varphi}_{L^{1+\ee^*}(S)}}{4\lambda}\right) \\
		&\leq C_2\left(n,r,\ee^*,\lambda,\norm{D^2\varphi}_{L^{1+\ee^*}(S)}\right)
		\text{.}
	\end{aligned}
	\end{equation}

	Combining (\ref{3-ide2-1})--(\ref{3-ide2-0}) and applying 
	Lemmas \ref{lemma1} and \ref{lemma2}, we get
	\begin{equation}\label{3-ide3-1}
	\begin{aligned}
		\sup_S w
		&\leq C_3\left(\norm{\widetilde{\ff}}_{L^\infty(S)}+
		\norm{f}_{L^{\widetilde{r}}(S)}+\norm{w}_{L^2(S)}\right)\\
		&\leq C_4(n,\lambda,\Lambda,r,\ee^*,\norm{\bb}_{L^\infty(S)},
		\norm{\bbb}_{L^\infty(S)},\norm{D^2\varphi}_{L^{1+\ee^*}(S)})
		\text{.}
	\end{aligned}
	\end{equation}
	Recalling that
	\begin{equation*}
	\begin{aligned}
		w = \log \frac{M+k}{M+k-u^+}
	\end{aligned}
	\end{equation*}
	and $M = \sup_S u^+$,
	we have
	\begin{equation}\label{3-ide3-2}
	\begin{aligned}
		\sup_S w = \log \frac{M+k}{k}
		\text{.}
	\end{aligned}
	\end{equation}
	Therefore, as $k=\norm{\ff}_{L^\infty(S)}+\norm{f}_{L^r(S)}$,
	the conclusion of the lemma follows from (\ref{3-ide3-1}) and (\ref{3-ide3-2}).
\end{proof}

By applying Lemma \ref{lemma3} to $u$ and $-u$, we obtain the following estimate.
\begin{lemma}\label{lemma3-1}
	Let $\varphi\in C^3(\Omega)$ be a convex function satisfying (\ref{maeqn}).
	Suppose $\ff, \bbb, \bb\in L_{\mathrm{loc}}^\infty(\Omega;\mathbb{R}^n)
	\cap W_{\mathrm{loc}}^{1,n}(\Omega;\mathbb{R}^n)$, 
	$f\in L_{\mathrm{loc}}^n(\Omega)$, $n/2<r\leq n$, and $\dv \bbb \leq 0$.
	Suppose $S = S_\varphi(x,t)$ is a normalized section and $S_\varphi(x,2t)\Subset\Omega$.
	Suppose $u \in W^{1,2}(S)
	\cap C(\overline{S})$ is a solution to (\ref{eqn}) in $S$ satisfying
	$u= 0$ on $\partial S$.
	Assume that
	\begin{enumerate}
		\item either $n= 2$, or 
		\item $n\geq 3$ and $\ee^*(n,\lambda,\Lambda) + 1 > \frac{n}{2}$,
		where $\ee^*$ is from Theorem \ref{intw2pest}.
	\end{enumerate}
	Then,
	\begin{equation}
	\begin{aligned}
		\norm{u}_{L^\infty(S)} \leq C\left(\norm{\ff}_{L^\infty(S)} + \norm{f}_{L^r(S)} \right)
	\end{aligned}
	\end{equation}
	where 
	\begin{equation*}
	\begin{aligned}
		C = C(n,\lambda,\Lambda, r, \ee^*, \norm{\bb}_{L^\infty(S)}, 
		\norm{\bbb}_{L^\infty(S)}, \norm{D^2\varphi}_{L^{1+\ee^*}(S)})
		\text{.}
	\end{aligned}
	\end{equation*}
\end{lemma}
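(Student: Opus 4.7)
The plan is to apply Lemma \ref{lemma3} twice, once to the function $u$ itself and once to $-u$. This is possible because the hypotheses on the coefficient $\bbb$ (in particular $\dv\bbb\leq 0$) and on $\bb$ are invariant under the reflection $u\mapsto -u$, since these coefficients are unchanged by this operation.

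First I would apply Lemma \ref{lemma3} directly to $u$, viewed as a subsolution since it is a solution. The boundary condition $u=0$ on $\partial S$ implies $u\leq 0$ on $\partial S$, so the hypotheses are satisfied, giving
\begin{equation*}
\sup_S u^+ \leq C\left(\norm{\ff}_{L^\infty(S)}+\norm{f}_{L^r(S)}\right)
\end{equation*}
with $C$ depending on the stated quantities.

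Next I would verify that $\tilde u:=-u$ satisfies a linearized Monge-Amp\`ere equation of the form (\ref{eqn}), with the same $\bb$ and $\bbb$ but with $\ff$ replaced by $-\ff$ and $f$ replaced by $-f$. Indeed, multiplying (\ref{eqn}) by $-1$ and reorganizing shows that
\begin{equation*}
-\dv(\Phi D\tilde u + \tilde u\bbb) + \bb\cdot D\tilde u = (-f) - \dv(-\ff)\text{.}
\end{equation*}
Since $\tilde u=0$ on $\partial S$, the hypotheses of Lemma \ref{lemma3} apply to $\tilde u$ (note that $\dv\bbb\leq 0$ is preserved since $\bbb$ is unchanged, and the norms $\norm{-\ff}_{L^\infty(S)}=\norm{\ff}_{L^\infty(S)}$ and $\norm{-f}_{L^r(S)}=\norm{f}_{L^r(S)}$), yielding
\begin{equation*}
\sup_S u^- = \sup_S \tilde u^+ \leq C\left(\norm{\ff}_{L^\infty(S)}+\norm{f}_{L^r(S)}\right)\text{.}
\end{equation*}

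Finally, combining the two estimates gives $\norm{u}_{L^\infty(S)}=\max(\sup_S u^+,\sup_S u^-)$ bounded by the claimed right-hand side. There is no substantive obstacle: the main work (Moser iteration, logarithmic test function, rewriting $w$ as a subsolution) has already been absorbed into Lemma \ref{lemma3}, so this step is purely a symmetry argument.
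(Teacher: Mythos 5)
Your proof is correct and matches the paper exactly: the paper itself introduces Lemma \ref{lemma3-1} with the single sentence that it follows by applying Lemma \ref{lemma3} to $u$ and $-u$, which is precisely the symmetry argument you spell out. The verification that $-u$ solves (\ref{eqn}) with $f,\ff$ replaced by $-f,-\ff$ (with $\bb,\bbb$ and hence the sign condition $\dv\bbb\leq 0$ unchanged) is accurate, and combining $\sup_S u^+$ and $\sup_S u^- = \sup_S(-u)^+$ gives $\norm{u}_{L^\infty(S)}$ as claimed.
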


Now, we rescale (\ref{eqn}) and apply Lemma \ref{lemma3-1} to obtain the following global estimate.

\begin{proposition}[Global $L^\infty$ estimate in normalized section]\label{lem:rescale}
	Let $\varphi\in C^3(\Omega)$ be a convex function satisfying (\ref{maeqn}).
	Suppose $\ff, \bbb, \bb\in L_{\mathrm{loc}}^\infty(\Omega;\mathbb{R}^n)
	\cap W_{\mathrm{loc}}^{1,n}(\Omega;\mathbb{R}^n)$, 
	$f\in L_{\mathrm{loc}}^n(\Omega)$, $n/2<r\leq n$, and $\dv \bbb \leq 0$.
	Suppose $S_\varphi (x_0, 2h_0)$ is a normalized section contained in $\Omega$, and $h\leq h_0$.
	Assume that $u \in W^{1,2}(S)
	\cap C(\overline{S})$ is a solution to (\ref{eqn})
	in $S=S_\varphi(x_0,h)$ satisfying $u=0$ on $\partial S$.
	Further assume that
	\begin{enumerate}
		\item either $n= 2$, or 
		\item $n\geq 3$ and $\ee^*(n,\lambda,\Lambda) + 1 > \frac{n}{2}$,
		where $\ee^*$ is from Theorem \ref{intw2pest}.
	\end{enumerate}
	Then,
	\begin{equation}
	\begin{aligned}
		\norm{u}_{L^\infty(S)} \leq C\left(\norm{\ff}_{L^\infty(S)} + \norm{f}_{L^r(S)} \right) h^\gamma
		\text{,}
	\end{aligned}
	\end{equation}
	where 
	\begin{equation*}
	\begin{aligned}
		&C=C(n,\lambda,\Lambda,r,\ee^*,\norm{\bb}_{L^\infty(S)}, 
		\norm{\bbb}_{L^\infty(S)})
		\text{,}\quad\text{and}\\
		&\gamma = \gamma(n, \lambda, \Lambda, r) > 0
		\text{.}
	\end{aligned}
	\end{equation*}
\end{proposition}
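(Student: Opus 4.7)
The plan is to rescale so that $S_\varphi(x_0,h)$ becomes a normalized section, apply Lemma \ref{lemma3-1} on that normalized section, and then translate the estimate back to the original coordinates while tracking the $h$-dependence. By John's lemma (Theorem \ref{john}) there is an affine map $T(y) = x_0 + Ay$ such that $\tilde S := T^{-1}(S_\varphi(x_0,h))$ is normalized; combined with the volume estimate in Lemma \ref{secvolest}, this forces $|\det A|$ to be comparable to $h^{n/2}$, and allows $\norm{A}$ and $\norm{A^{-1}}$ to be controlled in terms of $\diam(S_\varphi(x_0,2h_0))$, $h_0$, $n$, $\lambda$, $\Lambda$.

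Define
\begin{equation*}
\tilde\varphi(y) := h^{-1}\bigl[\varphi(Ty) - \varphi(x_0) - D\varphi(x_0)\cdot Ay\bigr], \qquad \tilde u(y) := u(Ty).
\end{equation*}
A direct computation gives $\det D^2\tilde\varphi = h^{-n}(\det A)^2\,(\det D^2\varphi)\circ T$, which lies in a universal interval $[\lambda',\Lambda']$; thus $\tilde\varphi$ satisfies (\ref{maeqn}) on $\tilde S$ with constants depending only on $n,\lambda,\Lambda$. Substituting $x = Ty$ into the weak formulation of (\ref{eqn}) and dividing through by the correct common scalar produces a weak equation of the same divergence form (\ref{eqn}) for $\tilde u$ on $\tilde S$, with cofactor matrix $\tilde\Phi = (\det D^2\tilde\varphi)(D^2\tilde\varphi)^{-1}$, and with rescaled drifts $\tilde\bb,\tilde\bbb$ and rescaled forcings $\tilde f,\tilde\ff$ that are explicit combinations of $A^{-1}$, powers of $h$, and the originals composed with $T$. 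Moreover $\tilde u = 0$ on $\partial\tilde S$, and $\dv\tilde\bbb \leq 0$ is preserved since the divergence transforms by a positive scalar.

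Because $\tilde S$ is normalized and $\det D^2\tilde\varphi$ is pinched by universal constants, Theorem \ref{intw2pest} produces a universal bound for $\norm{D^2\tilde\varphi}_{L^{1+\ee^*}(\tilde S)}$, verifying the Hessian-integrability hypothesis of Lemma \ref{lemma3-1} uniformly. The $L^\infty$ norms of the rescaled drifts $\tilde\bb,\tilde\bbb$ are bounded by the original drift norms multiplied by powers of $\norm{A}$ and of $h$, all of which are controlled by the parameters listed in the statement (using $h \leq h_0$ to absorb any potentially negative power of $h$). Lemma \ref{lemma3-1} therefore applies to $\tilde u$ on $\tilde S$ and yields
\begin{equation*}
\norm{\tilde u}_{L^\infty(\tilde S)} \leq C\bigl(\norm{\tilde\ff}_{L^\infty(\tilde S)} + \norm{\tilde f}_{L^r(\tilde S)}\bigr).
\end{equation*}
Since $\norm{u}_{L^\infty(S)} = \norm{\tilde u}_{L^\infty(\tilde S)}$ and the scalings give $\norm{\tilde\ff}_{L^\infty(\tilde S)} \leq C h^{\alpha_1}\norm{\ff}_{L^\infty(S)}$ and $\norm{\tilde f}_{L^r(\tilde S)} \leq C h^{\alpha_2}\norm{f}_{L^r(S)}$ for positive exponents $\alpha_1,\alpha_2$ depending only on $n,\lambda,\Lambda,r$, the stated bound follows with $\gamma := \min\{\alpha_1,\alpha_2\} > 0$.

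The main obstacle is the bookkeeping for the affine rescaling: tracking the powers of $h$ and $|\det A|$ that appear in each term of the weak formulation after the change of variables, selecting the correct overall normalizing scalar so that the leading coefficient of the rescaled equation is indeed $\tilde\Phi$, and then verifying that the powers of $h$ acting on $\tilde\ff$ and $\tilde f$ are \emph{strictly positive} (so that a genuine $h^\gamma$ factor survives) while any negative powers of $h$ arising in the rescaled drifts $\tilde\bb,\tilde\bbb$ and in the renormalization constants are absorbed into $C$ using the bound $h \leq h_0$.
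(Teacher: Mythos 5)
Your proposal follows the same overall strategy as the paper (rescale by John's lemma, apply Lemma~\ref{lemma3-1} on the normalized section, track exponents), but it leaves the central analytic point unaddressed — the very point you identify at the end as ``the main obstacle'' without resolving it. After rescaling, $\widetilde{\ff}$ is essentially $(\det A_h)^{2/n} A_h^{-1}\,\ff\circ T$, so $\norm{\widetilde{\ff}}_{L^\infty} \lesssim |\det A_h|^{2/n}\,\norm{A_h^{-1}}\,\norm{\ff}_{L^\infty}$. By the volume estimate, $|\det A_h|^{2/n}\sim h$. But the naive lower bound on the inradius of $S_\varphi(x_0,h)$ (e.g.\ via the Alexandrov maximum principle) gives only $\norm{A_h^{-1}}\lesssim h^{-n/2}$, producing the exponent $1-n/2\le 0$: no gain for $n=2$, and a blowing-up factor for $n\ge 3$. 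The same naive bound forces the $L^\infty$ norms of $\widetilde{\bb},\widetilde{\bbb}$ to scale like $h^{1-n/2}$, which \emph{cannot} be ``absorbed using $h\le h_0$'': that bound controls positive powers of $h$ from above, not negative ones, and the constant $C$ in the statement must be independent of $h$. Since in Lemma~\ref{lemma1}--\ref{lemma3-1} the constant depends (exponentially, through Moser iteration) on these drift norms, a bound that degenerates as $h\to 0$ is fatal.

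The missing ingredient is Caffarelli's interior $C^{1,\alpha}$ estimate, in the form of Corollary~\ref{intc1asec}: since $S_\varphi(x_0,2h_0)$ is a normalized section and $h\le h_0$, one gets $B_{c\,h^{1/(1+\alpha)}}(x_0)\subset S_\varphi(x_0,h)$ with $\alpha=\alpha(n,\lambda,\Lambda)>0$, hence the sharper bound $\norm{A_h^{-1}}\lesssim h^{-1/(1+\alpha)}$. Only with this does $|\det A_h|^{2/n}\norm{A_h^{-1}}\lesssim h^{\alpha/(1+\alpha)}$ carry the strictly positive power of $h$ that (i) gives $\gamma>0$ in front of $\norm{\ff}_{L^\infty}$, and (ii) keeps $\norm{\widetilde{\bb}}_{L^\infty},\norm{\widetilde{\bbb}}_{L^\infty}$ bounded uniformly as $h\to 0$, so that the constant from Lemma~\ref{lemma3-1} is controlled by the parameters in the statement. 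Inserting Corollary~\ref{intc1asec} at this point would close the gap and reproduce the paper's argument; the remaining parts of your sketch (the form of the rescaled equation, preservation of $\dv\widetilde{\bbb}\le 0$, the universal bound on $\norm{D^2\widetilde\varphi}_{L^{1+\ee^*}}$, and the exponent $1-n/(2r)>0$ for $\widetilde{f}$) are all correct.
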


\begin{proof}
	We use the rescaling in Le (\cite[pp.20-22]{Twisted_Harnack}, \cite[Section 3.2]{SG_LMA}).
    By John's lemma, there is an affine transformation $Tx=A_h x + b_h$
	such that $B_1 \subset T^{-1}(S_\varphi(x_0,h)) \subset B_n$.
	We define the rescaled functions 
	\begin{equation}\label{rescalefcn}
	\begin{aligned}
		\widetilde{\varphi}(x) 
		&:= (\det A_h)^{-2/n} \varphi(Tx) \text{,}\\
		\widetilde{u}(x) 
		&:= u(Tx) \text{,}\\
		\widetilde{\ff}(x) 
		&:= (\det A_h)^{2/n} A_h^{-1} \ff (Tx)\text{,}\\
		\widetilde{\bb}(x)
		&:=(\det A_h)^{2/n} A_h^{-1} \bb (Tx)\text{,}\\
		\widetilde{\bbb}(x)
		&:=(\det A_h)^{2/n} A_h^{-1} \bbb (Tx)\text{,\quad and}\\
		\widetilde{f}(x)
		&:=(\det A_h)^{2/n} f(Tx)\\
	\end{aligned}
	\end{equation}
	on 
	\begin{equation}\label{rescaledom}
	\begin{aligned}
		\widetilde{S}:=T^{-1}(S_\varphi(x_0,h))=S_{\widetilde{\varphi}}(y_0,(\det A_h)^{-2/n}h)
		\text{,}
	\end{aligned}
	\end{equation}
	where $y_0=T^{-1}x_0$.
	Then, the rescaled functions satisfy the equation
	\begin{equation}\label{rescaledeq}
	\begin{aligned}
		-\dv(\widetilde{\Phi} D\widetilde{u} + \widetilde{u}\widetilde{\bbb})
		+ \widetilde{\bb}\cdot D\widetilde{u}
		= \widetilde{f}- \dv \widetilde{\ff}
		\quad\text{in }\widetilde{S}
		\text{.}
	\end{aligned}
	\end{equation}

	To apply Lemma \ref{lemma3-1} to $\widetilde{u}$,
	we estimate the rescaled functions.
	First, note that
	\begin{equation*}
	\begin{aligned}
		\det D^2\widetilde{\varphi}(x) = (\det D^2 \varphi)(Tx)
		\quad\text{in }\widetilde{S}\text{,}
	\end{aligned}
	\end{equation*}
	so that 	
	\begin{equation*}
	\begin{aligned}
		\lambda\leq\det D^2\widetilde{\varphi}\leq\Lambda
		\quad\text{in }\widetilde{S}\text{.}
	\end{aligned}
	\end{equation*}
	Furthermore, as $B_1\subset\widetilde{S}\subset B_n$, 
	we have from Lemma \ref{secvolest},
	\begin{equation}\label{4-ide1-1}
	\begin{aligned}
		c(n,\lambda,\Lambda) h^{n/2}\leq \det A_h \leq 
		C(n,\lambda,\Lambda) h^{n/2}
		\text{.}
	\end{aligned}
	\end{equation}
	From Corollary \ref{intc1asec}, we get
	\begin{equation}\label{4-ide1-2}
	\begin{aligned}
		\norm{A_h^{-1}}\leq \frac{n}{ch^{\frac{1}{1+\alpha}}}
		\leq C(n,\lambda,\Lambda)h^{-\frac{1}{1+\alpha}}
		\text{.}
	\end{aligned}
	\end{equation}
	
	Now, from (\ref{rescalefcn}), (\ref{4-ide1-1}), and (\ref{4-ide1-2}), we get
	\begin{equation}\label{4-ide3}
	\begin{aligned}
		\norm{\widetilde{\bb}}_{L^\infty(\widetilde{S})}
		&\leq (Ch^{n/2})^{2/n} Ch^{-\frac{1}{1+\alpha}} \norm{\bb}_{L^\infty(S)}
		\leq C(n,\lambda,\Lambda)
		h^{\frac{\alpha}{1+\alpha}} \norm{\bb}_{L^\infty(S)}
		\text{.}
	\end{aligned}
	\end{equation}
	Similarly, we also obtain
	\begin{equation}\label{4-ide2-1}
	\begin{aligned}
		\norm{\widetilde{\bbb}}_{L^\infty(\widetilde{S})}
		&\leq C(n,\lambda,\Lambda)
		h^{\frac{\alpha}{1+\alpha}} \norm{\bbb}_{L^\infty(S)}
		\text{,}\quad\text{and}\\
		\norm{\widetilde{\ff}}_{L^\infty(\widetilde{S})}
		&\leq C(n,\lambda,\Lambda)
		h^{\frac{\alpha}{1+\alpha}} \norm{\ff}_{L^\infty(S)}
		\text{.}
	\end{aligned}
	\end{equation}
	Finally, we get
	\begin{equation}\label{4-ide2-2}
	\begin{aligned}
		\norm{\widetilde{f}}_{L^r(\widetilde{S})}
		&=\left(\int_{\widetilde{S}} (\det A_h)^{2r/n} f^r(Tx) \,dx \right)^{1/r}\\
		&=\left(\int_S (\det A_h)^{(2r/n)-1} f^r(y) \,dy \right)^{1/r}\\
		&\leq\left(\int_S (Ch^{n/2})^{(2r/n)-1} f^r(y) \,dy \right)^{1/r}\\
		&=C(n,\lambda,\Lambda)^{2/n-1/r} h^{1-n/2r} \norm{f}_{L^r(S)}
		\text{.}
	\end{aligned}
	\end{equation}
	As $\partial\widetilde{S}=T^{-1}(\partial S)$, $\widetilde{u}=0$ on $\partial\widetilde{S}$.
	Therefore, we may apply Lemma \ref{lemma3-1} to $\widetilde{u}$ and combine it with 
	(\ref{4-ide2-1}) and (\ref{4-ide2-2}) to get 
	\begin{equation}\label{4-ide4-1}
	\begin{aligned}
		\norm{u}_{L^\infty(S)}
		&=\norm{\widetilde{u}}_{L^\infty(\widetilde{S})}\\
		&\leq \widetilde{C}\left(\norm{\widetilde{\ff}}_{L^\infty(\widetilde{S})}
			+\norm{\widetilde{f}}_{L^r(\widetilde{S})}\right) \\
		&\leq \widetilde{C}\left(
			C(n,\lambda,\Lambda)h^{\frac{\alpha}{1+\alpha}} \norm{\ff}_{L^\infty(S)}
			+C(n,\lambda,\Lambda,r)h^{1-n/2r}\norm{f}_{L^r(S)}\right)
			\text{,}
	\end{aligned}
	\end{equation}
	where
	\begin{equation}\label{4-ide4-2}
	\begin{aligned}
		\widetilde{C}
		&=C(n,\lambda,\Lambda,r,\ee^*,\norm{\widetilde{\bb}}_{L^\infty(\widetilde{S})},
		\norm{\widetilde{\bbb}}_{L^\infty(\widetilde{S})}, 
		\norm{D^2 \widetilde{\varphi}}_{L^{1+\ee^*}(\widetilde{S})})
				\text{.}
	\end{aligned}
	\end{equation}

	As $S_\varphi(x_0,h)$ is contained in a normalized section, we have
	\begin{equation}\label{4-ide4-22}
	\begin{aligned}
		h\leq C(n,\lambda,\Lambda)
		\text{.}
	\end{aligned}
	\end{equation}
	Therefore, we have
	\begin{equation}\label{4-ide4-3}
	\begin{aligned}
		\widetilde{C}
		\leq C(n,\lambda,\Lambda,r,\ee^*,\norm{\bb}_{L^\infty(S)},
		\norm{\bbb}_{L^\infty(S)})
		\text{.}
	\end{aligned}
	\end{equation}	
	Furthermore, the $L^\infty$ norms of $\widetilde{\bb}$, 
	$\widetilde{\bbb}$ are under control by 
	(\ref{4-ide3}) and (\ref{4-ide2-1}). 
	Finally, by the $W^{2,1+\ee}$ estimate in Theorem \ref{intw2pest}, we have
	\begin{equation*}
	\begin{aligned}
		\norm{D^2\widetilde{\varphi}}_{L^{1+\ee^*}(\widetilde{S})}
		\leq C(n,\lambda,\Lambda)
		\text{.}
	\end{aligned}
	\end{equation*}
	Combining (\ref{4-ide4-1}), (\ref{4-ide4-2}), and (\ref{4-ide4-22}),
	we have
	\begin{equation}\label{4-ide4-33}
	\begin{aligned}
		\norm{u}_{L^\infty(S)}
		\leq \widetilde{C}C(n,\lambda,\Lambda,r,\alpha)\left(
			\norm{\ff}_{L^\infty(S)}+\norm{f}_{L^r(S)}\right)
			h^{\gamma(n,r,\alpha)}
			\text{,}
	\end{aligned}
	\end{equation}
	where
	$$
	\gamma=\min\left\{1-\frac{n}{2r}, \frac{\alpha}{1+\alpha}\right\}
	\text{.}
	$$
	As $\alpha=\alpha(n,\lambda,\Lambda)$,
	the conclusion of the lemma follows from
	(\ref{4-ide4-3}) and (\ref{4-ide4-33}).
\end{proof}

\section{Harnack Inequality}

In this section,
we use the global estimate in Proposition \ref{lem:rescale} to prove
the Harnack inequality, Theorem \ref{thm:harnack}.
We begin by expressing an arbitrary solution of (\ref{eqn})
as the sum of solutions of a homogeneous equation
and an inhomogeneous equation with zero boundary data.
The inhomogeneous part can be bounded using Proposition \ref{lem:rescale},
while the homogeneous part can be bounded using the Harnack inequality
in Theorem \ref{harnackref}.
Combining these estimates yields the Harnack inequality in
normalized sections, Proposition \ref{homharnack}.
Rescaling Proposition \ref{homharnack} then gives the desired Harnack inequality
in Theorem \ref{thm:harnack}.

We will first prove the following proposition.

\begin{proposition}[Harnack inequality in normalized section]\label{homharnack} 
	Let $\varphi\in C^3(\Omega)$ be a convex function satisfying (\ref{maeqn}).
	Suppose that $\ff,\bbb,\bb \in L_{\mathrm{loc}}^\infty(\Omega;\mathbb{R}^n)
	\cap W_{\mathrm{loc}}^{1,n}(\Omega;\mathbb{R}^n)$,
	$f\in L_\mathrm{loc}^n(\Omega)$,
	$\dv\bbb\leq0$, and $n/2<r\leq n$.
	Suppose $S_\varphi(x, h_0)$ is a normalized section contained in $\Omega$, and $h\leq h_0/2$.
	Assume that $u \in W^{2,n}(S_\varphi(x,h))$ 
	is a nonnegative solution to (\ref{eqn}) in $S_\varphi(x,h)$.
	Further assume that
	\begin{enumerate}
		\item either $n= 2$, or 
		\item $n\geq 3$ and $\ee^*(n,\lambda,\Lambda) + 1 > \frac{n}{2}$,
		where $\ee^*$ is from Theorem \ref{intw2pest}.
	\end{enumerate}
	Then,
	\begin{equation*}
	\begin{aligned}
		\sup_{S_\varphi(x,h/2)} u
		\leq C\left(
			(\norm{\ff}_{L^\infty(S_\varphi(x,h))}+\norm{f}_{L^r(S_\varphi(x,h))})h^\gamma 
			+\inf_{S_\varphi(x,h/2)} u
		\right)
		\text{,}
	\end{aligned}
	\end{equation*}
	where 
	\begin{equation*}
	\begin{aligned}
		\gamma &=\gamma(n,\lambda,\Lambda,r)>0
		\text{,}\quad\text{and }\\
		C&=C(n,\lambda,\Lambda,r,\ee^*,\norm{\bb}_{L^\infty(S_\varphi(x,h))},
			\norm{\dv \bbb}_{L^n(S_\varphi(x,h))},
			\norm{\bbb}_{L^\infty(S_\varphi(x,h))})>0
		\text{.}
	\end{aligned}
	\end{equation*}
\end{proposition}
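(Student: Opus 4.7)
The strategy, outlined at the start of Section 4, is to decompose $u$ on $S := S_\varphi(x,h)$ as $u = u_1 + u_2$, where $u_1$ solves the inhomogeneous problem with zero boundary values and $u_2$ solves the associated homogeneous problem, retaining the original boundary values of $u$. Proposition \ref{lem:rescale} will control $\|u_1\|_{L^\infty(S)}$, while Theorem \ref{harnackref} will deliver a classical Harnack estimate for $u_2$; assembling the two yields the desired inequality.

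More precisely, I will first let $u_1 \in W^{1,2}(S) \cap C(\overline{S})$ solve
\begin{equation*}
-\dv(\Phi Du_1 + u_1\bbb) + \bb\cdot Du_1 = f - \dv\ff \ \text{ in } S, \qquad u_1 = 0 \ \text{ on } \partial S,
\end{equation*}
which exists, with enough regularity that $u_2 := u - u_1$ inherits the $W^{2,n}_{\mathrm{loc}}$ regularity of $u$, by Lax-Milgram and standard elliptic regularity; the required coercivity comes from the positive-definiteness of $\Phi$ together with the sign condition $\dv\bbb \leq 0$. Then $u_2$ satisfies the homogeneous version of (\ref{eqn}) with $u_2|_{\partial S} = u|_{\partial S} \geq 0$. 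Because $\Phi$ is divergence free, this equation rewrites in nondivergence form as
\begin{equation*}
\Phi^{ij} D_{ij} u_2 + (\bbb - \bb)\cdot Du_2 + (\dv\bbb)\, u_2 = 0,
\end{equation*}
and the weak maximum principle (applicable because $\dv\bbb \leq 0$) yields $u_2 \geq 0$ in $S$.

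Next, I apply Proposition \ref{lem:rescale} to $u_1$, with the normalized ``outer'' section there taken to be $S_\varphi(x,h_0)$ (so its parameter ``$h_0$'' is $h_0/2$ in the present notation, making $h \leq h_0/2$ the required smallness condition), obtaining
\begin{equation*}
M := \|u_1\|_{L^\infty(S)} \leq C\bigl(\|\ff\|_{L^\infty(S)} + \|f\|_{L^r(S)}\bigr)\, h^\gamma.
\end{equation*}
For $u_2$, I apply Theorem \ref{harnackref} on the section $S_\varphi(x, h)$, with new drift $\bbb - \bb \in L^\infty$, zeroth-order coefficient $\dv\bbb \in L^n$, and zero right-hand side, which gives
\begin{equation*}
\sup_{S_\varphi(x,h/2)} u_2 \leq C \inf_{S_\varphi(x,h/2)} u_2.
\end{equation*}
Combining these via $u = u_1 + u_2$ and $u_2 = u - u_1$ yields
\begin{equation*}
\sup_{S_\varphi(x,h/2)} u \leq M + \sup_{S_\varphi(x,h/2)} u_2 \leq M + C\bigl(\inf_{S_\varphi(x,h/2)} u + M\bigr),
\end{equation*}
and substituting the bound on $M$ produces the stated estimate.

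The main technical point is arranging the decomposition with enough regularity that both $u_1$ satisfies the hypotheses of Proposition \ref{lem:rescale} and $u_2$ satisfies those of Theorem \ref{harnackref}; once this is in place, nonnegativity of $u_2$ is automatic from the sign of $\dv\bbb$ via the maximum principle, and the two ingredients combine routinely. It will also be important to verify that the constants from Proposition \ref{lem:rescale} and Theorem \ref{harnackref} depend only on the quantities allowed in the target conclusion, which follows by tracking norms: the $L^\infty$ and $L^n$ norms of the new drift and zeroth-order coefficient are dominated by $\|\bb\|_{L^\infty(S)}$, $\|\bbb\|_{L^\infty(S)}$, and $\|\dv\bbb\|_{L^n(S)}$.
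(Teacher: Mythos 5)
Your decomposition $u = u_1 + u_2$ with $u_1$ the zero--boundary inhomogeneous part and $u_2 = u - u_1$ the homogeneous part, the use of $\dv\bbb \leq 0$ with the nondivergence rewrite and the maximum principle to get $u_2\geq 0$, the application of Proposition \ref{lem:rescale} to $u_1$ and Theorem \ref{harnackref} to $u_2$, and the final reassembly are exactly the paper's argument (the paper calls these $u_0$ and $v$), so this is essentially the same proof.

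The one place you are imprecise is the existence/regularity of $u_1$. Lax--Milgram does not go through as stated: coercivity of the bilinear form
\begin{equation*}
B[w,w] = \int_S \Phi Dw\cdot Dw\,dx + \int_S w\,\bbb\cdot Dw\,dx + \int_S w\,\bb\cdot Dw\,dx
\end{equation*}
is not guaranteed by positivity of $\Phi$ and $\dv\bbb\leq 0$ alone, since $\int_S w\,\bb\cdot Dw\,dx = -\tfrac12\int_S (\dv\bb)\,w^2\,dx$ has no sign, and $\Phi$ has no uniform lower ellipticity bound to control it. Moreover even with coercivity one only gets a $W^{1,2}_0$ weak solution, whereas the maximum--principle step and the Harnack inequality of Theorem \ref{harnackref} need $u_1,u_2\in W^{2,n}$. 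The paper sidesteps both issues by invoking Gilbarg--Trudinger Theorem~9.15 (nondivergence form, zeroth--order coefficient $\dv\bbb\leq 0$, Fredholm alternative) to produce $u_0\in W^{2,n}(S)$ directly. With that replacement your argument is complete; the remaining bookkeeping of the constants is as in the paper.
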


\begin{proof} 
	By \cite[Theorem 9.15]{GT}, we can find a solution 
	$u_0\in W^{2,n}(S)$ to 
	\begin{equation*}
	\begin{aligned}
		\begin{cases} 
			-\dv (\Phi Du_0 +u_0\bbb)+\bb\cdot Du_0
			=f-\dv\ff
			&\text{in }S:=S_\varphi(x,h)\text{,} \\
			u_0=0
			&\text{on }\partial S\text{.}
		\end{cases}
	\end{aligned}
	\end{equation*}
	Then $v=u-u_0$ satisfies $v\geq 0$ on $\partial S$, and is a solution to 
	\begin{equation*}
	\begin{aligned}
		-\dv(\Phi Dv + v\bbb) + \bb\cdot Dv = 0
		\quad\text{in }S\text{.}
	\end{aligned}
	\end{equation*}
	Observing that the equation above can be written as
	\begin{equation*}
	\begin{aligned}
		-\Phi^{ij}D_{ij} v + (\bb-\bbb)\cdot Dv - (\dv\bbb)v = 0
	\end{aligned}
	\end{equation*}
	and $\dv\bbb\leq 0$, we have $v\geq 0$ in $S$ by the maximum principle \cite[Theorem 9.1]{GT}.
	As $S_\varphi(x,2h)$ is contained in a normalized section,
	$S_\varphi(x,2h)\subset B_n$ and
	$h\leq C(n,\lambda,\Lambda)$ by Lemma \ref{secvolest}.
	Therefore, we can apply the Harnack inequality in Theorem \ref{harnackref} to get
	\begin{equation}\label{7-ide1-1}
	\begin{aligned}
		&\sup_{S_\varphi (x,h/2)}v \leq C_1\inf_{S_\varphi(x,h/2)} v
		\text{,}
	\end{aligned}
	\end{equation}
	where
	\begin{equation*}
	\begin{aligned}
		&C_1=C_1(n,\lambda,\Lambda,\norm{\bb}_{L^\infty(S)},
		\norm{\bbb}_{L^\infty(S)}, \norm{\dv\bbb}_{L^n(S)})
		\text{.}
	\end{aligned}
	\end{equation*}

	By applying the global estimate in Proposition \ref{lem:rescale} to $u_0$, we obtain
	\begin{equation}\label{7-ide1-2}
	\begin{aligned}
		&\sup_{S} |u_0|
		\leq C_2(\norm{\ff}_{L^\infty(S)}+\norm{f}_{L^r(S)})h^\gamma
		\text{,}
	\end{aligned}
	\end{equation}
	where
	\begin{equation*}
	\begin{aligned}
		C_2&=C_2(n,\lambda,\Lambda,r,\ee^*,\norm{\bb}_{L^\infty(S)}, 
		\norm{\bbb}_{L^\infty(S)})
		\text{,}\quad\text{and}\\
		\gamma &= \gamma(n,\lambda,\Lambda,r) > 0
		\text{.}
	\end{aligned}
	\end{equation*}
	As $v=u-u_0$, combining (\ref{7-ide1-1}) and (\ref{7-ide1-2}) completes the proof.
\end{proof}

We are now ready to prove Theorem \ref{thm:harnack}.

\begin{proof}[Proof of Theorem \ref{thm:harnack}] 
	We prove the theorem by using the rescaling scheme in the proof of Proposition \ref{lem:rescale}.
	Using John's lemma, we find an affine transformation 
	\begin{equation*}
	\begin{aligned}
		Tx=A_h x + b_h
	\end{aligned}
	\end{equation*}
	such that
	\begin{equation}\label{res-ide-A}
	\begin{aligned}
		TB_1\subset S:=S_\varphi(x_0,h)\subset TB_n
		\text{,}
	\end{aligned}
	\end{equation}
	Using the transformation $T$, we define the rescaled functions as in 
	(\ref{rescalefcn}), (\ref{rescaledom}).

	We start by estimating the matrix $A_h$.
	First, from (\ref{res-ide-A}) and Lemma \ref{secvolest}, we obtain the following bounds on $\det A_h$:
	\begin{equation}\label{res-ide-C}
	\begin{aligned}
		&|\det A_h|
		=\frac{|TB_1|}{|B_1|}\leq \frac{|S_\varphi(x_0,h)|}{|B_1|}
		\leq C_1(n,\lambda,\Lambda)h^{n/2}
		=:C_3(n,\lambda, \Lambda, h)
		\text{,\quad and}\\
		&|\det A_h|
		=\frac{|TB_n|}{|B_n|}
		\geq \frac{|S_\varphi(x_0,h)|}{|B_n|}
		\geq c_1(n,\lambda, \Lambda)h^{n/2}
		=:c_4(n,\lambda, \Lambda, h)
		\text{.}
	\end{aligned}
	\end{equation}
	We also have (see \cite[(5.6)]{Le24})
	\begin{equation}\label{res-ide-B}
	\begin{aligned}
		\norm{A_h^{-1}}
		\leq C_2 = 
		\widetilde{C}(n,\lambda,\Lambda, \diam(S_\varphi(x_0,2h)))h^{-n/2}
		\text{.}
	\end{aligned}
	\end{equation}
	
	Recall that from (\ref{rescaledom}),
	\begin{equation*}
	\begin{aligned}
		\widetilde{S}:=T^{-1}(S_\varphi(x_0,h))=S_{\widetilde{\varphi}}(y_0,(\det A_h)^{-2/n}h)
		\text{.}
	\end{aligned}
	\end{equation*}
	We now estimate the rescaled functions.
	From (\ref{rescalefcn}), (\ref{res-ide-B}) and (\ref{res-ide-C}), we have
	\begin{equation}\label{res-ide-D}
	\begin{aligned}
		\norm{\widetilde{\bb}}_{L^\infty(\widetilde{S})}
		&\leq C_3^{2/n}C_2 \norm{\bb}_{L^\infty(S)}
		\text{,}\\
		\norm{\widetilde{\bbb}}_{L^\infty(\widetilde{S})}
		&\leq C_3^{2/n}C_2 \norm{\bbb}_{L^\infty(S)}
		\text{,}\\
		\norm{\widetilde{\ff}}_{L^\infty(\widetilde{S})}
		&\leq C_3^{2/n}C_2 \norm{\ff}_{L^\infty(S)}
		\text{,\quad and}\\
		\norm{\widetilde{f}}_{L^r(\widetilde{S})}
		&=\left(\int_S (\det A_h)^{(2r/n)-1} f^r(y) \,dy \right)^{1/r}\\
		&\leq\left(\int_S C_3^{(2r/n)-1} f^r(y) \,dy \right)^{1/r}\\
		&=C_3^{2/n-1/r} \norm{f}_{L^r(S)}
		\text{.}
	\end{aligned}
	\end{equation}
	Also, as 
	\begin{equation}
	\begin{aligned}
		\dv \widetilde{\bbb}(x) = (\det A_h)^{2/n} \dv \bbb(Tx) \leq 0
		\text{,}
	\end{aligned}
	\end{equation}
	we have
	\begin{equation}\label{res-ide-E}
	\begin{aligned}
		\norm{\dv \widetilde{\bbb}}_{L^n(\widetilde{S})}
		&=\left(\int_{\widetilde{S}} (\det A_h)^{2} [(\dv\bbb)(Tx)]^n \,dx \right)^{1/n}\\
		&=\left(\int_S (\det A_h) [(\dv\bbb)(y)]^n \,dy \right)^{1/n}\\
		&\leq\left(\int_S C_3 [(\dv\bbb)(y)]^n \,dy \right)^{1/n}\\
		&=C_3^{1/n} \norm{\dv\bbb}_{L^n(S)}
		\text{.}
	\end{aligned}
	\end{equation}

	For $t\leq h/2$, setting
	\begin{equation*}
	\begin{aligned}
		\widetilde{t}=(\det A_h)^{-2/n}t \leq (\det A_h)^{-2/n} h/2
		\text{}
	\end{aligned}
	\end{equation*}
	gives
	\begin{equation}\label{res-ide-G}
	\begin{aligned}
		S_{\widetilde{\varphi}}(y_0,\widetilde{t})=T^{-1}S_\varphi(x_0,t)
		\text{.}
	\end{aligned}
	\end{equation}
	Then, $\widetilde{u}$ is a solution to the rescaled equation (\ref{rescaledeq}) in $S_{\widetilde{\varphi}}(y_0,2\widetilde{t})$.
	Applying Proposition \ref{homharnack} to $\widetilde{u}$, we get
	\begin{equation}\label{res-ide-F}
	\begin{aligned}
		\sup_{S_{\widetilde{\varphi}}(y_0, \widetilde{t})} \widetilde{u}
		\leq C_5
		\left\{
			\left(\norm{\widetilde{\ff}}_{L^\infty(\widetilde{S})} + 
			\norm{\widetilde{f}}_{L^r(\widetilde{S})}\right) 
			\widetilde{t}^\gamma
			+ \inf_{S_{\widetilde{\varphi}}(y_0, \widetilde{t})} \widetilde{u}
		\right\}
		\text{.}
	\end{aligned}
	\end{equation} 
	Here, the constants $C_5$ and $\gamma$ come from Proposition \ref{homharnack}:
	\begin{equation*}
	\begin{aligned}
		\gamma &=\gamma(n,\lambda,\Lambda,r)>0
		\text{,}\quad\text{and }\\
		C_5&=C_5(n,\lambda,\Lambda,r,\ee^*,\norm{\widetilde{\bb}}_{L^\infty(\widetilde{S})},
			\norm{\dv \widetilde{\bbb}}_{L^n(\widetilde{S})},
			\norm{\widetilde{\bbb}}_{L^\infty(\widetilde{S})})>0
		\text{.}
	\end{aligned}
	\end{equation*}
	Furthermore, the norms $\norm{\widetilde{\bb}}_{L^\infty(\widetilde{S})}$,
		$\norm{\dv \widetilde{\bbb}}_{L^n(\widetilde{S})}$,
		$\norm{\widetilde{\bbb}}_{L^\infty(\widetilde{S})}$,
		$\norm{\widetilde{\ff}}_{L^\infty(\widetilde{S})}$,
		and $\norm{\widetilde{f}}_{L^r(\widetilde{S})}$
	of the rescaled functions are under control by (\ref{res-ide-D}) and (\ref{res-ide-E}).
	Finally, $\widetilde{t}$ is controlled by $t$ through
	\begin{equation}\label{res-ide-H}
	\begin{aligned}
		\widetilde{t}\leq c_4^{-2/n} t
		\text{.}
	\end{aligned}
	\end{equation}
	Therefore, putting (\ref{res-ide-D}), (\ref{res-ide-E}), (\ref{res-ide-G}) and 
	(\ref{res-ide-H}) together, we obtain the conclusion of the theorem from (\ref{res-ide-F}).
\end{proof}

\section{Interior Estimates}
In this section,
we prove the interior estimate for solutions to (\ref{eqn})
in Lemma \ref{lemma5}.
This estimate will be used in the proofs of the H\"older estimates
in Corollary \ref{linftyhoelder} and Theorem \ref{mainthm} in Section 6.

We begin by defining suitable test functions and then applying Moser iteration.
This yields an estimate in Lemma \ref{lemma4} for the $L^\infty$ norm of
solutions $u$ to (\ref{eqn}),
involving its $L^{q^*}$ norm in a larger section,
where $q^*$ is a finite number.
Next, using a dilation argument from Le \cite[Theorem 15.4]{Le24} and rescaling,
we obtain the interior estimate in Lemma \ref{lemma5}.

We will first prove the following lemma.

\begin{lemma}[Interior estimate in normalized section]\label{lemma4}
	Let $\varphi\in C^3(\Omega)$ be a convex function satisfying (\ref{maeqn}).
	Suppose $\ff, \bb, \bbb\in W_{\mathrm{loc}}^{1,n}(\Omega;\mathbb{R}^n)
	\cap L_{\mathrm{loc}}^\infty(\Omega;\mathbb{R}^n)$, 
	$f\in L_{\mathrm{loc}}^n(\Omega)$, and $n/2<r\leq n$.
	Assume that $S_\varphi(x_0, 2t)\Subset\Omega$,
	and $S_\varphi(x_0,t)$ is a normalized section.
	Assume that $u \in W^{1,2}(S_\varphi(x_0,t))$ 
	is a nonnegative solution to (\ref{eqn}) in $S_\varphi(x_0,t)$.
	Further assume that
	\begin{enumerate}
		\item either $n= 2$, or 
		\item $n\geq 3$ and $\ee^*(n,\lambda,\Lambda) + 1 > \frac{n(n-1)}{2}$
		where $\ee^*$ is from Theorem \ref{intw2pest}.
	\end{enumerate}
	Then,
	\begin{equation*}
	\begin{aligned}
		\sup_{S_\varphi(x_0, t/2)}u
		\leq C(\norm{u}_{L^{q*}(S_\varphi(x_0,t))}+\norm{\ff}_{L^\infty(S_\varphi(x_0,t))}+\norm{f}_{L^r(S_\varphi(x_0,t))})
		\text{,}
	\end{aligned}
	\end{equation*}
	where 
	\begin{equation*}
	\begin{aligned}
		C = C(n,\lambda,\Lambda, r, \ee^*, \norm{\bb}_{L^\infty(S_\varphi(x_0,t))}, 
		\norm{\bbb}_{L^\infty(S_\varphi(x_0,t))})
		\text{,}\quad\text{and}\quad
		q^*=q^*(\ee^*,n,r)
		\text{.}
	\end{aligned}
	\end{equation*}
\end{lemma}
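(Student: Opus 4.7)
The proof proceeds by Moser iteration on nested subsections, analogous to but more delicate than Lemma \ref{lemma1}: since $u$ need not vanish on $\partial S_\varphi(x_0,t)$, we must introduce cutoff functions, and the resulting cutoff terms involve the cofactor matrix $\Phi$ itself, which is why higher integrability of $D^2\varphi$ (namely $1+\ee^*>n(n-1)/2$) is required. Set $\bar u:=u+k$ with $k:=\norm{\ff}_{L^\infty(S_\varphi(x_0,t))}+\norm{f}_{L^r(S_\varphi(x_0,t))}$, and for a cutoff $\zeta$ supported between two nested sections $S'\Subset S''\subset S_\varphi(x_0,t)$, insert the (truncated) test function $v=\zeta^2 G(\bar u)$ into the weak formulation of (\ref{eqn}), where $G'(s)=H'(s)^2$ and $H$ behaves like $s^\beta$ on a suitable range for $\beta\geq 1$, exactly as in the proof of Lemma \ref{lemma1}.

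Expanding $Dv=2\zeta G(\bar u)D\zeta+\zeta^2 G'(\bar u)D\bar u$ and handling the drift terms $\int u\bbb\cdot Dv$, $\int(\bb\cdot Du)v$ and the right-hand side terms $\int\ff\cdot Dv$, $\int fv$ by Cauchy--Schwarz together with (\ref{phiineq}), exactly as in Lemma \ref{lemma1}, the cross term $2\zeta G(\bar u)D\zeta$ produces an additional contribution that cannot be absorbed into the main energy. After absorbing what can be absorbed, one arrives at an energy inequality of the form
\[
\int \zeta^2\,\Phi\,DH(\bar u)\cdot DH(\bar u)\,dx
\leq C\int H(\bar u)^2\,\Phi D\zeta\cdot D\zeta\,dx
+ C\int\zeta^2 H(\bar u)^2\, h\,dx,
\]
with $h$ as in Lemma \ref{lemma1}. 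Applying the Monge--Amp\`ere Sobolev inequality (Theorem \ref{ma-sobolev}) to $\zeta H(\bar u)$ and combining with the preceding estimate yields a reverse-H\"older-type bound $\norm{\bar u}_{L^{\chi p}(S')}\leq (C\beta)^{1/\beta}\norm{\bar u}_{L^p(S'')}$ with gain $\chi:=\widehat n/\widehat q>1$.

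The crucial new input is the control of $\Phi D\zeta\cdot D\zeta$: choosing $\zeta=\eta(\psi)$ with $\psi(x)=\varphi(x)-\varphi(x_0)-D\varphi(x_0)\cdot(x-x_0)$, one has $D\zeta=\eta'(\psi)(D\varphi-D\varphi(x_0))$, and Caffarelli's $C^{1,\alpha}$ estimate (Theorem \ref{intc1a}) bounds $|D\varphi-D\varphi(x_0)|$ pointwise on a normalized section; hence $\Phi D\zeta\cdot D\zeta$ is controlled pointwise by a constant times $\|\Phi\|$. Since the entries of $\Phi$ are $(n-1)\times(n-1)$ minors of $D^2\varphi$, the $W^{2,1+\ee^*}$ estimate gives $\Phi\in L^{(1+\ee^*)/(n-1)}$, and pairing this in the cutoff integral via H\"older against $H(\bar u)^2$ requires $(1+\ee^*)/(n-1)>n/2$, which is precisely the hypothesis $1+\ee^*>n(n-1)/2$. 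With this integrability the base exponent of the reverse-H\"older inequality is $q^*=q^*(n,r,\ee^*)$ determined by the H\"older conjugate of $\min\{(1+\ee^*)/(n-1),r\}$.

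One then iterates the reverse-H\"older inequality along a dyadic sequence of sections $S_\varphi(x_0,t_k)\searrow S_\varphi(x_0,t/2)$ with exponents $p_k=q^*\chi^k\to\infty$, using cutoffs $\zeta_k$ between consecutive levels and summing the geometric series $\sum\chi^{-m}$ and $\sum m\chi^{-m}$ to control the multiplicative constant, exactly as in the closing steps of Lemma \ref{lemma1}. This gives $\sup_{S_\varphi(x_0,t/2)}\bar u\leq C\norm{\bar u}_{L^{q^*}(S_\varphi(x_0,t))}$, which yields the claimed estimate once $k$ is absorbed on the right-hand side. The main obstacle is the cutoff term $\int H(\bar u)^2\,\Phi D\zeta\cdot D\zeta\,dx$: absent the zero boundary condition of Lemma \ref{lemma1}, it cannot be avoided, and controlling it via H\"older against the $L^{(1+\ee^*)/(n-1)}$ norm of $\Phi$ is exactly what forces the stronger integrability hypothesis $1+\ee^*>n(n-1)/2$, in contrast to the milder $1+\ee^*>n/2$ sufficient for Lemma \ref{lemma1}.
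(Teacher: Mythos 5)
Your proposal is correct and follows essentially the same strategy as the paper: Moser iteration with cutoffs localized to nested sections, with the key obstruction correctly identified as the cutoff term $\int H(\bar u)^2\,\Phi D\zeta\cdot D\zeta\,dx$ and the bound $\Phi\leq(\Delta\varphi)^{n-1}I_n$ forcing $(1+\ee^*)/(n-1)>n/2$. The one place you deviate from the paper is the construction of the cutoff. The paper takes the standard route: it uses the Alexandrov maximum principle to get $\dist(S_{\bar r},\partial S_R)\geq c(R-\bar r)^n$ and builds $\eta$ with $|D\eta|\leq C(R-\bar r)^{-n}$, then bounds $\Phi D\eta\cdot D\eta\leq(\Delta\varphi)^{n-1}|D\eta|^2$. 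You instead take $\zeta=\eta(\psi)$ with $\psi$ the linearization of $\varphi$ at $x_0$, so $D\zeta=\eta'(\psi)(D\varphi-D\varphi(x_0))$, and invoke Caffarelli's interior $C^{1,\alpha}$ estimate to bound $|D\varphi-D\varphi(x_0)|$; this is the Caffarelli–Guti\'errez–style cutoff adapted to the section geometry and it gives the slightly better gap-dependence $(t_2-t_1)^{-2}$ rather than $(t_2-t_1)^{-2n}$. Either way the integrand reduces to $(\Delta\varphi)^{n-1}$ and leads to the same integrability requirement, so the two constructions are interchangeable for the purposes of the lemma. Everything else — the energy inequality, the choice $q=\min\{(1+\ee^*)/(n-1),r\}>n/2$ with $q^*=\widehat q=2q/(q-1)$, the geometric-series bookkeeping, and the final absorption of $k$ — matches the paper.
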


\begin{proof} 
	We argue as in Le \cite[pp.515--517]{Le24}.
	Let $\overline{u}= u+k$, 
	where 
	$$k = \norm{\ff}_{L^\infty(S_\varphi(x_0,t))} + \norm{f}_{L^r(S_\varphi(x_0,t))}\text{.}$$
	For $\eta\in C_c^1(S_\varphi(x_0,t))$ to be determined later 
	and $\beta\geq 0$, we use
	$v=\eta^2 \overline{u}^{\beta+1}\in C_c^1(S_\varphi(x_0,t))$ 
	as a test function in (\ref{eqn}) and extend it to be zero
	outside $S_\varphi(x_0,t)$ to obtain
	\begin{equation}\label{5-ide1}
	\begin{aligned}
		\int_\Omega \Phi Du\cdot Dv \, dx + \int_\Omega u\bbb\cdot Dv\, dx
		+\int_\Omega v\bb\cdot Du\, dx = \int_\Omega \ff\cdot Dv\, dx +\int_\Omega fv
		\text{.}
	\end{aligned}
	\end{equation}
	
	Because
	\begin{equation*}
	\begin{aligned}
		Dv=(\beta+1)\eta^2\overline{u}^\beta D\overline{u}+2\eta\overline{u}^{\beta+1}D\eta
		\quad\text{and }D\overline{u}=Du
		\text{,}
	\end{aligned}
	\end{equation*}
	the terms in (\ref{5-ide1}) become
	\begin{equation}\label{5-ide2-1}
	\begin{aligned}
		\int_\Omega \Phi Du\cdot Dv\, dx
		&=(\beta+1)\int_\Omega \eta^2 \overline{u}^\beta \Phi D\overline{u}\cdot D\overline{u}\, dx
		+2\int_\Omega \eta\overline{u}^{\beta+1} \Phi D\overline{u}\cdot D\eta\, dx \text{,}\\
		\int_\Omega u\bbb\cdot Dv\, dx
		&=(\beta+1)\int_\Omega \eta^2 u\overline{u}^\beta \bbb\cdot D\overline{u}\, dx
		+2\int_\Omega \eta u\overline{u}^{\beta+1} \bbb\cdot D\eta\, dx \text{,}\\
		\int_\Omega v\bb\cdot Du\, dx
		&=\int_\Omega \eta^2\overline{u}^{\beta+1} \bb\cdot D\overline{u}\, dx \text{,}\\
		\int_\Omega \ff\cdot Dv\, dx
		&=(\beta+1)\int_\Omega \eta^2\overline{u}^\beta \ff\cdot D\overline{u}\, dx
		+2\int_\Omega \eta\overline{u}^{\beta+1} \ff\cdot D\eta\, dx \text{,}\quad\text{and}\\
		\int_\Omega fv\, dx
		&=\int_\Omega \eta^2\overline{u}^{\beta+1} f\, dx
		\text{.}
	\end{aligned}
	\end{equation}
	We now estimate these terms.
	By the Cauchy-Schwarz inequality, we have
	\begin{equation}\label{5-ide10}
	\begin{aligned}
		-2\int_\Omega \eta\overline{u}^{\beta+1} \Phi D\overline{u}\cdot D\eta\, dx 
		\leq\frac{1}{8}\int_\Omega\eta^2\overline{u}^\beta\Phi D\overline{u}\cdot D\overline{u} \,dx
		+8\int_\Omega\overline{u}^{\beta+2}\Phi D\eta\cdot D\eta\, dx
		\text{.}
	\end{aligned}
	\end{equation}
	By the Cauchy-Schwarz inequality and (\ref{phiineq}), we get
	\begin{equation}
	\begin{aligned}
		&-(\beta+1)\int_\Omega \eta^2 u\overline{u}^\beta \bbb\cdot D\overline{u}\, dx \\
		&\leq(\beta+1)\int_\Omega \eta^2 \overline{u}^{\beta+1}|\bbb| |D\overline{u}|\, dx \\
		&\leq\frac{\beta+1}{8}\int_\Omega \eta^2 \overline{u}^{\beta}\Phi D\overline{u}\cdot D\overline{u}\, dx
		+2(\beta+1)\int_\Omega \eta^2\overline{u}^{\beta+2}\frac{\Delta\varphi}{\lambda} |\bbb|^2\, dx
		\text{.}
	\end{aligned}
	\end{equation}
	Similarly,
	\begin{equation}
	\begin{aligned}
		-2\int_\Omega \eta u\overline{u}^{\beta+1} \bbb\cdot D\eta\, dx
		&\leq 2\int_\Omega \eta\overline{u}^{\beta+2}|\bbb||D\eta|\, dx \\
		&\leq\int_\Omega \overline{u}^{\beta+2}\Phi D\eta\cdot D\eta\, dx
		+\int_\Omega \eta^2\overline{u}^{\beta+2} \frac{\Delta\varphi}{\lambda}|\bbb|^2 \, dx
		\text{,}
	\end{aligned}
	\end{equation}
	and
	\begin{equation}
	\begin{aligned}
		-\int_\Omega \eta^2\overline{u}^{\beta+1} \bb\cdot D\overline{u}\, dx
		\leq\frac{1}{8}\int_\Omega \eta^2\overline{u}^\beta\Phi D\overline{u}\cdot D\overline{u}\, dx
		+2\int_\Omega \eta^2\overline{u}^{\beta+2} \frac{\Delta\varphi}{\lambda}|\bb|^2\, dx
		\text{.}
	\end{aligned}
	\end{equation}
	As in (\ref{5-ide10}) and using 
	$\overline{u}\geq |\ff|$ in $S_\varphi(x_0,t)$, we have
	\begin{equation}
	\begin{aligned}
		&(\beta+1)\int_\Omega \eta^2\overline{u}^\beta \ff\cdot D\overline{u}\, dx \\
		&\leq (\beta+1)\int_\Omega 
		\left(\eta^2\overline{u}^\beta\Phi D\overline{u}\cdot D\overline{u}\right)^{1/2}
		\left(\eta^2\overline{u}^\beta\frac{\Delta\varphi}{\lambda}|\ff|^2\right)^{1/2}\, dx \\
		&\leq \frac{\beta+1}{8}\int_\Omega \eta^2\overline{u}^\beta\Phi D\overline{u}\cdot D\overline{u}\, dx
		+ 2(\beta+1)\int_\Omega \eta^2\overline{u}^\beta\frac{\Delta\varphi}{\lambda}|\ff|^2 \, dx \\
		&\leq \frac{\beta+1}{8}\int_\Omega \eta^2\overline{u}^\beta\Phi D\overline{u}\cdot D\overline{u}\, dx
		+ 2(\beta+1)\int_\Omega \eta^2\overline{u}^{\beta+2}\frac{\Delta\varphi}{\lambda} \, dx
		\text{,}
	\end{aligned}
	\end{equation}
	and
	\begin{equation}
	\begin{aligned}
		2\int_\Omega \eta\overline{u}^{\beta+1} \ff\cdot D\eta\, dx
		&\leq 2\int_\Omega \left(\overline{u}^{\beta+2}\Phi D\eta\cdot D\eta\right)^{1/2}
		\left(\eta^2\overline{u}^\beta \frac{\Delta\varphi}{\lambda}|\ff|^2\right)^{1/2}\,dx \\
		&\leq \int_\Omega \overline{u}^{\beta+2}\Phi D\eta\cdot D\eta\, dx
		+ \int_\Omega \eta^2\overline{u}^\beta \frac{\Delta\varphi}{\lambda}|\ff|^2\, dx \\
		&\leq \int_\Omega \overline{u}^{\beta+2}\Phi D\eta\cdot D\eta\, dx
		+ \int_\Omega \eta^2\overline{u}^{\beta+2} \frac{\Delta\varphi}{\lambda}\, dx
		\text{.}
	\end{aligned}
	\end{equation}
	Finally,
	\begin{equation}\label{5-ide2-0}
	\begin{aligned}
		\int_\Omega \eta^2\overline{u}^{\beta+1}f\,dx
		\leq \int_\Omega \eta^2\overline{u}^{\beta+2} \frac{|f|}{k} \, dx
		\text{.}
	\end{aligned}
	\end{equation}
	
	Now we put (\ref{5-ide1})--(\ref{5-ide2-0}) together.
	We use (\ref{5-ide2-1}) to substitute the integrals in (\ref{5-ide1});
	then, we apply the estimates in (\ref{5-ide10})--(\ref{5-ide2-0}).
	As $\beta\geq 0$ and each integral appearing on the right-hand sides
	of (\ref{5-ide10})--(\ref{5-ide2-0}) is nonnegative, we get
	\begin{equation}\label{5-ide3-0}
	\begin{aligned}
		&\frac{1}{2}(\frac{\beta}{2}+1)
		\int_\Omega \eta^2\overline{u}^\beta \Phi D\overline{u}\cdot D\overline{u}\, dx\\
		&\leq 10\left(
			\int_\Omega \overline{u}^{\beta+2}\Phi D\eta\cdot D\eta\, dx
			+\frac{\beta+2}{2}\int_\Omega \eta^2\overline{u}^{\beta+2}
			\left\{\frac{\Delta\varphi}{\lambda}(1+|\bb|^2+|\bbb|^2)+\frac{|f|}{k}\right\}\, dx
		\right)
		\text{.}
	\end{aligned}
	\end{equation}
	Because
	\begin{equation*}
	\begin{aligned}
		D(\overline{u}^{\beta/2+1}\eta)
		=\left(\frac{\beta}{2}+1\right)\overline{u}^{\beta/2}\eta D\overline{u}
		+\overline{u}^{\beta/2+1} D\eta
		\text{,}
	\end{aligned}
	\end{equation*}
	we have
	\begin{equation*}
	\begin{aligned}
		\Phi D(\overline{u}^{\beta/2+1}\eta)\cdot D(\overline{u}^{\beta/2+1}\eta)
		\leq 2\left[
			\left(\frac{\beta}{2}+1\right)^2\overline{u}^\beta \eta^2 \Phi D\overline{u} \cdot D\overline{u}
		+\overline{u}^{\beta+2} \Phi D\eta\cdot D\eta \right]
		\text{.}
	\end{aligned}
	\end{equation*}
	Therefore, (\ref{5-ide3-0}) implies that
	\begin{equation}\label{5-ide3}
	\begin{aligned}
		&\int_\Omega \Phi D(\overline{u}^{\beta/2+1}\eta)\cdot D(\overline{u}^{\beta/2+1}\eta) \, dx 
		\leq 128\left(\frac{\beta}{2}+1\right)^2
		\left[\int_\Omega \overline{u}^{\beta+2}\Phi D\eta\cdot D\eta\, dx\right. \\
			&\qquad\left.+\int_\Omega \eta^2\overline{u}^{\beta+2}
			\left\{\frac{\Delta\varphi}{\lambda}(1+|\bb|^2+|\bbb|^2)+\frac{|f|}{k}\right\}\, dx
		\right]
		\text{.}
	\end{aligned}
	\end{equation}
	
	Letting
	\begin{align*} 
		S_a:=S_\varphi(x_0, a)
		\text{,}
	\end{align*}
	we have, 
	from the Alexandrov Maximum Principle \cite[Theorem 3.12]{Le24} (also see \cite[(15.16)]{Le24}),
	\begin{align*}
	\dist (S_{\overline{r}}, \partial S_R)\geq c(n,\lambda,\Lambda)
	(R-{\overline{r}})^n 
	\text{\quad for } 0<{\overline{r}}<R\leq t
	\text{.}
	\end{align*}
	Hence, we may choose $\eta$ supported on $S_R$ so that $0\leq\eta\leq 1$,
	$\eta\equiv 1$ in $S_{\overline{r}}$, and
	\begin{equation}\label{gradest}
	\begin{aligned}
		|D\eta| \leq C_0(n,\lambda,\Lambda)(R-{\overline{r}})^{-n}
		\text{.}
	\end{aligned}
	\end{equation}
	We set 
	\begin{equation*}
	\begin{aligned}
		q=\min \left\{\frac{1+\ee^*}{n-1}, r\right\} > \frac{n}{2}
		\text{,}
	\end{aligned}
	\end{equation*}
	and define $\widehat{q}, \widehat{n}$ using (\ref{qdef}) and (\ref{ndef}). That is,
	\begin{equation*}
	\begin{aligned}
		\widehat{q}&:= \frac{2q}{q-1}
		\text{,\quad and }
		\widehat{n} := \begin{cases} 
			\frac{2n}{n-2} &\quad\text{if } n \geq 3 \text{,}\\
			2\widehat{q} &\quad\text{if } n = 2 \text{.}
		\end{cases}	\end{aligned}
	\end{equation*}
	Then, by the  Monge-Amp\`ere Sobolev inequality, Theorem \ref{ma-sobolev}, we have
	\begin{equation}\label{5-ide4-2}
	\begin{aligned}
		\int_\Omega \Phi D(\overline{u}^{\beta/2+1}\eta)\cdot D(\overline{u}^{\beta/2+1}\eta) \, dx 
		&=\int_{S_R} \Phi D(\overline{u}^{\beta/2+1}\eta)\cdot D(\overline{u}^{\beta/2+1}\eta) \, dx \\
		&\geq{c_1(n,\lambda,\Lambda, \widehat{q})}
		\norm{\overline{u}^{\beta/2+1}\eta}_{L^{\widehat{n}}(S_R)}^2 \\
		&\geq{c_1}\norm{\overline{u}^{\beta/2+1}}_{L^{\widehat{n}}(S_{\overline{r}})}^2 
		\text{.}
	\end{aligned}
	\end{equation}
	
	Because $D^2\varphi>0$, all of its eigenvalues are smaller than $\Delta\varphi$.
	Hence, 
	$$\Phi = (\det D^2\varphi)(D^2\varphi)^{-1}\leq (\Delta\varphi)^{n-1}I_n \text{.}$$ 
	Therefore, we have, from (\ref{gradest}),
	\begin{equation}\label{5-ide3-1}
	\begin{aligned}
		\int_\Omega \overline{u}^{\beta+2}\Phi D\eta\cdot D\eta\, dx
		&\leq \int_{S_R} \overline{u}^{\beta+2}(\Delta\varphi)^{n-1}|D\eta|^2 \, dx \\
		&\leq C_0^2(R-{\overline{r}})^{-2n}\int_{S_R}
		\overline{u}^{\beta+2}(\Delta\varphi)^{n-1}\, dx 
		\text{.} 
	\end{aligned}
	\end{equation}
	We also have
	\begin{equation}\label{5-ide3-2}
	\begin{aligned}
		&\int_\Omega \eta^2\overline{u}^{\beta+2}
			\left\{\frac{\Delta\varphi}{\lambda}(1+|\bb|^2+|\bbb|^2)+\frac{|f|}{k}\right\}\, dx \\
		&\leq \int_{S_R} \overline{u}^{\beta+2}
			\left\{\frac{\Delta\varphi}{\lambda}(1+|\bb|^2+|\bbb|^2)+\frac{|f|}{k}\right\}\, dx \\
		&\leq C_2(n,\lambda,\Lambda) (R-{\overline{r}})^{-2n} \int_{S_R} \overline{u}^{\beta+2}
			\left\{\frac{\Delta\varphi}{\lambda}(1+|\bb|^2+|\bbb|^2)+\frac{|f|}{k}\right\}\, dx
		\text{.}
	\end{aligned}
	\end{equation}
	If we define
	\begin{equation*}
	\begin{aligned}
		h := \frac{\Delta\varphi}{\lambda}(1+|\bb|^2+|\bbb|^2)+\frac{|f|}{k} + (\Delta\varphi)^{n-1}
		\text{,}
	\end{aligned}
	\end{equation*}
	then $h\in L^q(S_t)$.
	From (\ref{5-ide3-1}), (\ref{5-ide3-2}), and the H\"older inequality, 
	the right-hand side of (\ref{5-ide3}) is bounded by
	\begin{equation}\label{5-ide4-3}
	\begin{aligned}
		\text{RHS }(\ref{5-ide3})
		&\leq 128(C_0^2+C_2) 
		(\frac{\beta}{2}+1)^2 
		(R-{\overline{r}})^{-2n}\int_{S_R} \overline{u}^{\beta+2}h\, dx\\
		&\leq C_3(n,\lambda,\Lambda) 
		(\frac{\beta}{2}+1)^2 (R-{\overline{r}})^{-2n} 
		\norm{\overline{u}^{\beta/2+1}}_{L^{\widehat{q}}(S_R)}^2 \norm{h}_{L^q(S_t)}
		\text{.}
	\end{aligned}
	\end{equation}
	
	Combining (\ref{5-ide3}), (\ref{5-ide4-2}), and (\ref{5-ide4-3}) yields
	\begin{equation}\label{5-ide5}
	\begin{aligned}
		\norm{\overline{u}^{\beta/2+1}}_{L^{\widehat{n}}(S_{\overline{r}})}^2
		\leq C_4(n,\lambda,\Lambda, \widehat{q})\norm{h}_{L^q(S_t)}
		(R-{\overline{r}})^{-2n}(\frac{\beta}{2}+1)^2
		\norm{\overline{u}^{\beta/2+1}}_{L^{\widehat{q}}(S_R)}^2
		\text{.}
	\end{aligned}
	\end{equation}
	As $q>n/2$, $\widehat{n}>\widehat{q}$ and we may set
	\begin{equation*}
	\begin{aligned}
		\chi&:= \frac{\widehat{n}}{\widehat{q}} > 1
		\text{,}\quad\text{and }
		\gamma:=\widehat{q}(\frac{\beta}{2}+1)
		\text{.}
	\end{aligned}
	\end{equation*}
	Then, (\ref{5-ide5}) becomes 
	\begin{equation}\label{5-ide6}
	\begin{aligned}
		\norm{\overline{u}}_{L^{\gamma\chi}(S_{\overline{r}})}
		\leq \left(C_5(n,\lambda,\Lambda,\widehat{q})\norm{h}_{L^q(S_t)}
		(R-{\overline{r}})^{-2n}\gamma^2\right)^{\frac{q}{q-1}\frac{1}{\gamma}}
		\norm{\overline{u}}_{L^{\gamma}(S_R)}
		\text{.}
	\end{aligned}
	\end{equation}

	Define for each integer $j\geq 0$
	\begin{equation*}
	\begin{aligned}
		r_j = \frac{t}{2}+\frac{t}{2^{j+1}}
		\text{,}\quad\text{and }
		\gamma_j =\chi^j\widehat{q}
		\text{.}
	\end{aligned}
	\end{equation*}
	Setting $R=r_j$, ${\overline{r}}=r_{j+1}$, 
	and $\gamma=\gamma_j$ in (\ref{5-ide6}), we get
	\begin{equation}\label{5-ide7}
	\begin{aligned}
		\norm{\overline{u}}_{L^{\chi^{j+1}\widehat{q}}(S_{r_{j+1}})}
		\leq \left(2C_5 \widehat{q}^{2} \norm{h}_{L^q(S_t)} t^{-2n}
		2^{2n(j+2)}\chi^{2j}\right)^{\chi^{-j}/2}
		\norm{\overline{u}}_{L^{\chi^j}(S_{r_j})}
		\text{.}
	\end{aligned}
	\end{equation}
	Iterating (\ref{5-ide7}) yields
	\begin{equation}\label{5-ide8-1}
	\begin{aligned}
		\norm{\overline{u}}_{L^{\infty}(S_{t/2})}
		\leq \left( 2C_5\widehat{q}^{2} \norm{h}_{L^q(S_t)}t^{-2n}\right)^{\sum_{j\geq 0}\chi^{-j}/2}
		2^{\sum_{j\geq 0}n(j+2)\chi^{-j}} \chi^{\sum_{j\geq 0} j\chi^{-j}}
		\norm{\overline{u}}_{L^{\widehat{q}}(S_t)}
		\text{.}
	\end{aligned}
	\end{equation}
	As $S_\varphi(x_0, t)$ is normalized, we have from Lemma \ref{secvolest},
	\begin{equation}\label{5-ide8-2}
	\begin{aligned}
		t^{-1} \leq C_6(n,\lambda,\Lambda)
		\text{.}
	\end{aligned}
	\end{equation}
	Finally, the $W^{2,1+\ee}$ estimate in Theorem \ref{intw2pest} implies
	\begin{equation}\label{5-ide8-3}
	\begin{aligned}
		\norm{h}_{L^q(S_t)}
		&\leq
		C_7(n)\frac{1+\norm{\bb}_{L^\infty(S_t)}^2
		+\norm{\bbb}_{L^\infty(S_t)}^2}{\lambda}
		\norm{D^2\varphi}_{L^q(S_t)}\\
		&\quad+ \frac{\norm{f}_{L^q(S_t)}}{k} 
		+C_7(n)\norm{D^2\varphi}_{L^{{q}{(n-1)}}(S_t)}^{n-1}\\
		&\leq C_8(n,\ee^*,r,\lambda,\Lambda,\norm{\bb}_{L^\infty(S_t)},\norm{\bbb}_{L^\infty(S_t)}, 
		\norm{D^2\varphi}_{L^{1+\ee^*}(S_t)}) \\
		&\leq C_9(n,\ee^*,r,\lambda,\Lambda,\norm{\bb}_{L^\infty(S_t)},\norm{\bbb}_{L^\infty(S_t)}) 
		\text{.}
	\end{aligned}
	\end{equation}
	The conclusion of the lemma follows from (\ref{5-ide8-1})--(\ref{5-ide8-3}).
\end{proof}

Now, we rescale (\ref{eqn}) as we did in the proof of Proposition \ref{lem:rescale},
and apply the result in Lemma \ref{lemma4}.
Using the estimates from the proof of Theorem \ref{thm:harnack}, 
we then argue as in Le \cite[Theorem 15.4]{Le24} to obtain the following interior estimates in general sections.

\begin{lemma}[Interior estimate in general section]\label{lemma5}
	Let $\varphi\in C^3(\Omega)$ be a convex function satisfying (\ref{maeqn}).
	Suppose $\ff, \bb, \bbb\in W_{\mathrm{loc}}^{1,n}(\Omega;\mathbb{R}^n)
	\cap L_{\mathrm{loc}}^\infty(\Omega;\mathbb{R}^n)$, 
	$f\in L_{\mathrm{loc}}^n(\Omega)$, and $n/2<r\leq n$.
	Assume that $S_\varphi(x_0, 2h)\Subset\Omega$ 
	and $u \in W^{1,2}(S_\varphi(x_0,h))$ 
	is a nonnegative solution to (\ref{eqn}) in $S_\varphi(x_0,2h)$.
	Further assume that
	\begin{enumerate}
		\item either $n= 2$, or 
		\item $n\geq 3$ and $\ee^*(n,\lambda,\Lambda) + 1 > \frac{n(n-1)}{2}$
		where $\ee^*$ is from Theorem \ref{intw2pest}.
	\end{enumerate}
	Then,
	\begin{equation}\label{intl2est}
	\begin{aligned}
		\sup_{S_\varphi(x_0, h/2)}u
		\leq C(h^{-\frac{n}{4}}\norm{u}_{L^{2}(S_\varphi(x_0,h))}+
		h^{1-\frac{n}{2}}\norm{\ff}_{L^\infty(S_\varphi(x_0,h))}+
		h^{1-\frac{n}{2r}}\norm{f}_{L^r(S_\varphi(x_0,h))})
	\end{aligned}
	\end{equation}
	where 
	\begin{equation*}
	\begin{aligned}
		C = C(n,\lambda,\Lambda, r, \ee^*, 
		\norm{\bb}_{L^\infty(S_\varphi(x_0,h))}, 
		\norm{\bbb}_{L^\infty(S_\varphi(x_0,h))}, h
		,\diam (S_\varphi (x,2h)))>0
		\text{.}
	\end{aligned}
	\end{equation*}
\end{lemma}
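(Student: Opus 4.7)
The plan is to reduce Lemma \ref{lemma5} to Lemma \ref{lemma4} via the John's-lemma rescaling already used in Proposition \ref{lem:rescale} and Theorem \ref{thm:harnack}, and then to convert the $L^{q^*}$ norm that appears on the right-hand side of Lemma \ref{lemma4} into an $L^{2}$ norm through an interpolation-and-absorption argument as in Le \cite[Theorem 15.4]{Le24}. Concretely, I would first pick an affine map $Tx = A_h x + b_h$ with $TB_1\subset S_\varphi(x_0,h)\subset TB_n$ and define the rescaled functions $\widetilde{\varphi},\widetilde{u},\widetilde{\ff},\widetilde{\bb},\widetilde{\bbb},\widetilde{f}$ by \eqref{rescalefcn}. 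Then $\widetilde{S}:=T^{-1}S_\varphi(x_0,h)=S_{\widetilde{\varphi}}(y_0,(\det A_h)^{-2/n}h)$ is a normalized section of $\widetilde{\varphi}$, and $\widetilde{u}$ solves the rescaled equation \eqref{rescaledeq} on $\widetilde{S}$. The bounds $\det A_h\asymp h^{n/2}$ from Lemma \ref{secvolest} together with $\norm{A_h^{-1}}\lesssim h^{-n/2}\diam(S_\varphi(x_0,2h))$ from \eqref{res-ide-B} then control the $L^\infty$ norms of the rescaled drift terms and $\norm{D^2\widetilde{\varphi}}_{L^{1+\ee^*}(\widetilde{S})}$ in terms of the data of the lemma.

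Next, I would apply Lemma \ref{lemma4} to $\widetilde{u}$ on the normalized section $\widetilde{S}$ to obtain
$$\sup_{T^{-1}S_\varphi(x_0,h/2)}\widetilde{u}\leq C\bigl(\norm{\widetilde{u}}_{L^{q^*}(\widetilde{S})}+\norm{\widetilde{\ff}}_{L^\infty(\widetilde{S})}+\norm{\widetilde{f}}_{L^r(\widetilde{S})}\bigr).$$
Undoing the change of variables $x=Ty$, the factor $(\det A_h)^{-1/p}$ from the Jacobian combined with the factor $(\det A_h)^{2/n}\norm{A_h^{-1}}$ built into the definition of $\widetilde{\ff},\widetilde{\bb},\widetilde{\bbb},\widetilde{f}$ produces, just as in the proofs of Proposition \ref{lem:rescale} and Theorem \ref{thm:harnack}, the prefactors $h^{1-n/2}$ and $h^{1-n/(2r)}$ in front of $\norm{\ff}_{L^\infty(S_\varphi(x_0,h))}$ and $\norm{f}_{L^r(S_\varphi(x_0,h))}$, together with a factor $(\det A_h)^{-1/q^*}\asymp h^{-n/(2q^*)}$ in front of $\norm{u}_{L^{q^*}(S_\varphi(x_0,h))}$.

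To replace $\norm{u}_{L^{q^*}}$ by $\norm{u}_{L^2}$ with the desired $h^{-n/4}$ scaling, I would run the above procedure over the concentric sections $S_\varphi(x_0,\rho h)$ for $\rho\in[1/2,1]$. This requires a sharper version of Lemma \ref{lemma4} obtained by redoing its Moser iteration with a cut-off $\eta$ supported on $S_{\rho'h}$, equal to $1$ on $S_{\rho h}$, and whose gradient is estimated through \eqref{gradest} by a suitable power of $(\rho'-\rho)^{-1}$; this yields
$$\sup_{S_\varphi(x_0,\rho h)}u\leq C(\rho'-\rho)^{-\alpha}\bigl(\norm{u}_{L^{q^*}(S_\varphi(x_0,\rho' h))}+\text{lower-order terms}\bigr)\quad(1/2\leq\rho<\rho'\leq 1).$$
Interpolating $\norm{u}_{L^{q^*}}\leq\norm{u}_{L^\infty}^{1-2/q^*}\norm{u}_{L^2}^{2/q^*}$, applying Young's inequality, and invoking the standard iteration lemma \cite[Lemma 8.18]{GT} to absorb the $L^\infty$ term then yields \eqref{intl2est}.

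The main obstacle is the bookkeeping in this last step. One has to re-derive Lemma \ref{lemma4} with an extra parameter measuring the annulus $S_{\rho'h}\setminus S_{\rho h}$ so that the iteration lemma applies, and at the same time track the rescaling factors $\det A_h\asymp h^{n/2}$ and $\norm{A_h^{-1}}\lesssim h^{-n/2}\diam(S_\varphi(x_0,2h))$ through the interpolation so that the final exponents $h^{-n/4}$, $h^{1-n/2}$, $h^{1-n/(2r)}$ in \eqref{intl2est} appear with the correct powers. Because $q^*$ depends only on $\ee^*,n,r$, the interpolation exponent $2/q^*$ is fixed, which is what ultimately allows the $L^2$-based estimate \eqref{intl2est} to close.
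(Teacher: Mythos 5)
Your proposal is correct and follows essentially the same route as the paper: rescale by John's lemma, apply Lemma \ref{lemma4} to the rescaled solution on the normalized section, and then convert the resulting $L^{q^*}$ bound into an $L^2$ bound by the interpolation-plus-absorption dilation argument---this last step is precisely what the paper delegates to Le \cite[pp.519--521]{Le24} and Han-Lin \cite[pp.75--76]{Han-Lin}, and your observation that the Moser iteration in Lemma \ref{lemma4} must be rerun with a cut-off between two intermediate section heights (so that the absorption lemma applies) is the right technical point. One small slip: \cite[Lemma 8.18]{GT} is the weak Harnack inequality for supersolutions, not the absorption iteration lemma of the form ``if $f(t)\leq\theta f(s)+A(s-t)^{-\alpha}+B$ for all $\tau_0\leq t<s\leq\tau_1$ with $\theta<1$, then $f(\tau_0)\leq c\left(A(\tau_1-\tau_0)^{-\alpha}+B\right)$'' that your argument actually invokes.
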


\begin{proof}
We rescale $S=S_\varphi (x_0,h)$ as in the proof of Theorem \ref{thm:harnack}, 
so that $B_1\subset T^{-1}S_\varphi(x_0,h)\subset B_n$.
We will use $\widetilde{C}$ and the numbered constants $C_n$
to denote the same constants from the proof of Theorem \ref{thm:harnack}
throughout the proof of this lemma.

For $\widetilde{h}:=(\det A_h)^{-2/n}h$, we have the rescaled equation (\ref{rescaledeq})
in $\widetilde{S}=S_{\widetilde{\varphi}}(y_0, \widetilde{h})$.
Applying Lemma \ref{lemma4} to $\widetilde{u}$, we get
\begin{equation}\label{res-ide2-11}
\begin{aligned}
	\sup_{S_{\widetilde{\varphi}}(y_0, \widetilde{h}/2)}\widetilde{u}
	\leq D_1(\norm{\widetilde{u}}_{L^{q*}(S_{\widetilde{\varphi}}(y_0, \widetilde{h}))}+
	\norm{\widetilde{\ff}}_{L^\infty(S_{\widetilde{\varphi}}(y_0,\widetilde{h}))}
	+\norm{\widetilde{f}}_{L^r(S_{\widetilde{\varphi}}(y_0,\widetilde{h}))})
	\text{,}
\end{aligned}
\end{equation}
where $D_1>0$ depends on 
$n$, $\lambda$, $\Lambda$, $r$, $\ee^*$, 
$\norm{\widetilde{\bb}}_{L^\infty(\widetilde{S})}$, 
and $\norm{\widetilde{\bbb}}_{L^\infty(\widetilde{S})}$.

Using the expression for $C_2$ in (\ref{res-ide-B}) and $C_3$ in (\ref{res-ide-C}), 
we use the estimates (\ref{res-ide-D}) to estimate the norms of the rescaled functions:
\begin{equation}\label{res-ide2-1}
\begin{aligned}
	\norm{\widetilde{\bb}}_{L^\infty(S_{\widetilde{\varphi}}(y_0,\widetilde{h}))}
	&\leq \left({C_1}h^{n/2}\right)^{2/n}(\widetilde{C}h^{-n/2}) \norm{\bb}_{L^\infty(S_\varphi(x_0,h))}\\
	&= C_1^{2/n}\widetilde{C}h^{1-n/2} \norm{\bb}_{L^\infty(S_\varphi(x_0,h))}
	\text{,}\\
	\norm{\widetilde{\bbb}}_{L^\infty(S_{\widetilde{\varphi}}(y_0,\widetilde{h}))}
	&\leq C_1^{2/n}\widetilde{C}h^{1-n/2} \norm{\bbb}_{L^\infty(S_\varphi(x_0,h))}
	\text{,}\\
	\norm{\widetilde{\ff}}_{L^\infty(S_{\widetilde{\varphi}}(y_0,\widetilde{h}))}
	&\leq C_1^{2/n}\widetilde{C}h^{1-n/2} \norm{\ff}_{L^\infty(S_\varphi(x_0,h))}
	\text{,\quad and}\\
	\norm{\widetilde{f}}_{L^r(S_{\widetilde{\varphi}}(y_0,\widetilde{h}))}
	&= (C_1 h^{n/2})^{2/n-1/r} \norm{f}_{L^r(S_\varphi(x_0,h))} \\
	&= C_1^{2/n-1/r}h^{1-n/2r} \norm{f}_{L^r(S_\varphi(x_0,h))} 
	\text{.}
\end{aligned}
\end{equation}
We also have (see \cite[Lemma 15.2(iii)]{Le24})
\begin{equation}
\begin{aligned}
	\norm{\widetilde{u}}_{L^{q^*}(S_{\widetilde{\varphi}}(y_0, \widetilde{h}))}
	\leq D_2(n,\lambda,\Lambda, q^*)h^{-n/2q^*}\norm{u}_{L^{q^*}(S_\varphi(x_0,h))}
\end{aligned}
\end{equation}
for $q^* = q^*(\ee^*,n,r)$, and
\begin{equation}\label{res-ide2-0}
\begin{aligned}
	\sup_{S_{\widetilde{\varphi}}(y_0, \widetilde{h}/2)}\widetilde{u}
	=\sup_{S_\varphi(x_0, h/2)} u
	\text{.}
\end{aligned}
\end{equation}

The $L^\infty$ norms of $\widetilde{\bb}$ and $\widetilde{\bbb}$ are under control from (\ref{res-ide2-1}).
Hence, from (\ref{res-ide2-11})--(\ref{res-ide2-0}) we have,
\begin{equation}\label{qineq}
\begin{aligned}
	\sup_{S_\varphi(x_0, h/2)}u
	\leq D_3(h^{-\frac{n}{2q^*}}\norm{u}_{L^{q^*}(S_\varphi(x_0,h))}+
	h^{1-\frac{n}{2}}\norm{\ff}_{L^\infty(S_\varphi(x_0,h))}+
	h^{1-\frac{n}{2r}}\norm{f}_{L^r(S_\varphi(x_0,h))})
	\text{,}
\end{aligned}
\end{equation}
where $D_3$ depends on 
	$n$, $\lambda$, $\Lambda$, $r$, $\ee^*$, 
	$\norm{(\bb,\bbb)}_{L^\infty(S_\varphi(x_0,h))}$, 
	$h$, and $\diam(S_\varphi(x_0,2h))$.
We can now use (\ref{qineq}) to argue as in Le \cite[pp.519--521]{Le24}
(see also Han-Lin \cite[pp.75--76]{Han-Lin})
to obtain (\ref{intl2est}).
This gives the conclusion of the Lemma.
\end{proof}

\begin{remark}
In fact, following the arguments cited above,
we can obtain (\ref{intl2est}) with the $L^2$ norm of $u$ replaced by
the $L^p$ norm of $u$, for any $p>0$.
\end{remark}

\section{Interior H\"older Estimates}
In this section, we prove the interior H\"older estimates in 
Corollary \ref{linftyhoelder}
and Theorem \ref{mainthm}.
We start by combining the Harnack inequality in Theorem \ref{thm:harnack}
and the global estimate in Proposition \ref{lem:rescale}
to prove Corollary \ref{linftyhoelder}.

\begin{proof}[Proof of Corollary \ref{linftyhoelder}]
	Let $\osc (g,E):=\sup_E g-\inf_E g$.
	It is sufficient (see \cite[pp.523--524]{Le24}) to prove the oscillation estimate
	\begin{equation}\label{oscest}
	\begin{aligned}
		\osc(u,S_\varphi(x_0, h)) \leq C_0 \left(
			\norm{u}_{L^\infty(S_\varphi(x_0, h_0))}+
			\norm{\ff}_{L^\infty(S_\varphi(x_0, 2h_0))}+\norm{f}_{L^r(S_\varphi(x_0, 2h_0))}
		\right)h^{\gamma_0}
	\end{aligned}
	\end{equation}
	for all $h\leq h_0$, 
	where the positive constants $C_0$ and $\gamma_0$ 
	have the same dependency as $C$ and $\gamma$ stated in the Corollary.

	As in Le \cite[pp.284--285]{SG_LMA}, we break up the solution $u=v+w$ in $S_\varphi(x_0,h)$,
	$h\leq h_0$, where $v,w\in W^{2,n}(S_\varphi(x_0, h))$
	are solutions to 
	\begin{equation*}
	\begin{aligned}
		\begin{cases}
		    -\dv(\Phi Dv + v\bbb)+\bb\cdot Dv = f-\dv\ff
			&\text{in }S_\varphi(x_0,h) \text{,} \\
			v=0 &\text{on }\partial S_\varphi(x_0,h) \text{,}
		\end{cases}
	\end{aligned}
	\end{equation*}
	and
	\begin{equation*}
	\begin{aligned}
		\begin{cases}
		    -\dv(\Phi Dw + w\bbb)+\bb\cdot Dw = 0
			&\text{in }S_\varphi(x_0,h) \text{,}\\
			w=u &\text{on }\partial S_\varphi(x_0,h)
			\text{.}
		\end{cases}
	\end{aligned}
	\end{equation*}
	Such $u$ and $v$ exist as a consequence of \cite[Theorem 9.15]{GT}.
	
	We now rescale $S_\varphi(x_0, 2h_0)$ as in the proof of Theorem \ref{thm:harnack},
	so that $B_1\subset T^{-1}S_\varphi(x_0,2h_0)\subset B_n$
	for $Tx = A_{2h_0}x + b_{2h_0}$.
	We define the rescaled functions using (\ref{rescalefcn}),
	and set $\widetilde{v}(x):=v(Tx)$.
	Applying the global estimate in Proposition \ref{lem:rescale}
	to $\widetilde{v}$, we get 
	\begin{equation}\label{6-ide1-11}
	\begin{aligned}
		\norm{\widetilde{v}}_{L^\infty 
		(S_{\widetilde{\varphi}}(y_0, \widetilde{h}))}\leq
		\widetilde{C}_1
		\left(\norm{\widetilde{\ff}}_{L^\infty(S_{\widetilde{\varphi}}(y_0, \widetilde{h}))}+
		\norm{\widetilde{f}}_{L^r(S_{\widetilde{\varphi}}(y_0, \widetilde{h}))}\right)
		\widetilde{h}^{\widetilde{\gamma}_1}
		\text{,}
	\end{aligned}
	\end{equation}
	where 
	\begin{equation*}
	\begin{aligned}
		\widetilde{h}&:=(\det A_{2h_0})^{-2/n}h\text{,}\\
		\widetilde{C}_1 
		&= \widetilde{C}_1\left(n,\lambda,\Lambda,r,\ee^*,
		\norm{\widetilde{\bbb}}_{L^\infty(S_{\widetilde{\varphi}}(y_0, \widetilde{h}))},
		\norm{\widetilde{\bb}}_{L^\infty(S_{\widetilde{\varphi}}(y_0, \widetilde{h}))}\right)
		\text{,\quad and}\\
		{\gamma}_1 &= {\gamma}_1(n, \lambda, \Lambda, r) > 0
		\text{.}
	\end{aligned}
	\end{equation*}
	The $L^\infty$ norms of $\widetilde{\ff}$, $\widetilde{\bb}$,
	$\widetilde{\bbb}$,
	and the $L^r$ norm of $\widetilde{f}$
	are under control as in (\ref{res-ide2-1}).
	Also, by (\ref{res-ide-C}), 
	$$\widetilde{h}\leq C(n,\lambda,\Lambda, h_0)h\text{.}$$
	Hence, from (\ref{6-ide1-11}), we get
	\begin{equation}\label{6-ide1-1}
	\begin{aligned}
		\osc(v, S_\varphi(x_0, h/2))\leq
		2\norm{v}_{L^\infty(S_\varphi(x_0,h))}\leq
		C_1(\norm{\ff}_{L^\infty(S_\varphi(x_0, 2h_0))}+\norm{f}_{L^r(S_\varphi(x_0, 2h_0))})h^{\gamma_1}
		\text{,}
	\end{aligned}
	\end{equation}
	where 
	\begin{equation*}
	\begin{aligned}
		C_1 &= C_1(n,\lambda,\Lambda,r,\ee^*,\norm{\bb}_{L^\infty(S_\varphi(x_0, 2h_0))}, 
		\norm{\bbb}_{L^\infty(S_\varphi(x_0, 2h_0))},
		h_0, \diam(S_\varphi(x_0, 2h_0)))
		\text{.}
	\end{aligned}
	\end{equation*}

	We now estimate the oscillation of $w$.
	Define
	\begin{equation*}
	\begin{aligned}
		M(t):= \sup_{S_\varphi(x_0,t)} w
		\text{\quad and }
		m(t):=\inf_{S_\varphi(x_0,t)}w
		\text{,}
	\end{aligned}
	\end{equation*}
	and set
	\begin{equation*}
	\begin{aligned}
		w_1(x):=w(x)-m(h)
		\text{\quad and }
		w_2(x):=M(h) - w(x)
		\text{.}
	\end{aligned}
	\end{equation*}
	Then, $w_1$ and $w_2$ are nonnegative solutions to 
	\begin{align*}
		-\dv(\Phi Dw_1 + w_1\bbb)+\bb\cdot Dw_1 &= m(h)\dv\bbb 
		\text{,\quad and }\\
		-\dv(\Phi Dw_2 + w_2\bbb)+\bb\cdot Dw_2 &= -M(h)\dv\bbb 
		\text{}
	\end{align*}
	in $S_\varphi(x_0,h)$.
	Therefore, applying the Harnack inequality in Theorem \ref{thm:harnack} to $w_1$, $w_2$ gives
	\begin{equation}\label{harnhol}
	\begin{aligned}
		M(h/2) - m(h) 
		&\leq C_2 (m(h/2)-m(h)+\norm{m(h)\dv\bbb}_{L^n(S_\varphi(x_0,h_0))}h^{\gamma_2}) 
		\text{,\quad and }\\
		M(h) - m(h/2) 
		&\leq C_2 (M(h)-M(h/2)+\norm{M(h)\dv\bbb}_{L^n(S_\varphi(x_0,h_0))}h^{\gamma_2}) 
		\text{,}
	\end{aligned}
	\end{equation}
	where 
	\begin{equation*}
	\begin{aligned}
		{C}_2 
		&= {C}_2\left(n,\lambda,\Lambda,\ee^*,
		\norm{(\bb,\bbb)}_{L^\infty(S_\varphi(x_0, 2h_0))},
		\norm{\dv\bbb}_{L^n(S_\varphi(x_0, 2h_0))},
		h_0, \diam (S_\varphi(x_0, 2h_0))
		\right)
		\text{,}\\
		\text{and}\\
		{\gamma}_2 &= {\gamma}_2(n, \lambda, \Lambda) > 0
		\text{.}
	\end{aligned}
	\end{equation*}

	Note that $w$ satisfies a nondivergence form equation in $S_\varphi(x_0,h)$. 
	That is,
	\begin{equation*}
	\begin{aligned}
		-\Phi_{ij} D_{ij}w + (\bb-\bbb)\cdot Dw - (\dv\bbb)w = 0
		\text{.}
	\end{aligned}
	\end{equation*}
	As $\dv\bbb\leq 0$, we may apply the maximum principle \cite[Theorem 9.1]{GT}
	using the nondivergence form equation
	to conclude that $w$ takes extreme values on $\partial S$. 
	As $w=u$ on $\partial S$,
	$$|M(h)|, |m(h)| \leq \norm{u}_{L^\infty(S_\varphi(x_0,h))}\text{.}$$ 
	Therefore, as $h\leq h_0$, we have
	\begin{equation*}
	\begin{aligned}
		\norm{m(h)\dv\bbb}_{L^n(S_\varphi(x_0,h_0))}+
		\norm{M(h)\dv\bbb}_{L^n(S_\varphi(x_0,h_0))}
		\leq 2\norm{u}_{L^\infty(S_\varphi(x_0, h_0))} \norm{\dv\bbb}_{L^n(S_\varphi(x_0,2h_0))}
		\text{.}
	\end{aligned}
	\end{equation*}
	Hence, adding the two inequalities in (\ref{harnhol}), we get 
	\begin{equation*}
	\begin{aligned}
		&(1+C_2)(M(h/2)-m(h/2)) \leq \\
		&\qquad (C_2-1)(M(h)-m(h)) + 
		2C_2\norm{\dv\bbb}_{L^n(S_\varphi(x_0, 2h_0))} \norm{u}_{L^\infty(S_\varphi(x_0, h_0))} h^{\gamma_2}
		\text{.}
	\end{aligned}
	\end{equation*}
	Replacing $C_2$ by $C_2 + 2$, we may assume $C_2 > 1$. 
	Setting $\beta : = \frac{C_2-1}{C_2+1}\in (0,1)$ and 
	\begin{equation*}
	\begin{aligned}
		C_3:=\frac{2C_2\norm{\dv\bbb}_{L^n(S_\varphi(x_0, 2h_0))}}{1+C_2}
		\text{,}
	\end{aligned}
	\end{equation*}
	we have
	\begin{equation}\label{6-ide1-2}
	\begin{aligned}
		&\osc(w,S_\varphi(x_0, h/2)) \leq\beta\osc(w,S_\varphi(x_0,h))
		+ C_3
		\norm{u}_{L^\infty(S_\varphi(x_0,h_0))}h^{\gamma_2}
		\text{.}
	\end{aligned}
	\end{equation}
	From the maximum principle, we also have
	\begin{equation}\label{6-ide1-0}
	\begin{aligned}
		\osc(w,S_\varphi(x_0,h))= \osc (w, \partial S_\varphi (x_0, h))
		= \osc (u, \partial S_\varphi (x_0, h))
		\leq \osc(u,S_\varphi(x_0,h))
		\text{.}
	\end{aligned}
	\end{equation}
	
	Recalling $u=v+w$, 
	from (\ref{6-ide1-1}), (\ref{6-ide1-2}), and (\ref{6-ide1-0}) we get
	\begin{equation*}
	\begin{aligned}
		\osc (u, S_\varphi(x_0, h/2))
		&\leq \osc (w, S_\varphi(x_0, h/2))+\osc (v, S_\varphi(x_0, h/2))\\
		&\leq \beta\osc(u, S_\varphi(x_0,h))
		+C_3 \norm{u}_{L^\infty(S_\varphi(x_0,h_0))}h^{\gamma_2}\\
		&\qquad+C_1(\norm{\ff}_{L^\infty(S_\varphi(x_0, 2h_0))}+\norm{f}_{L^r(S_\varphi(x_0, 2h_0))})h^{\gamma_1}
		\text{.}
	\end{aligned}
	\end{equation*}
	Therefore, by a standard argument (see \cite[Lemma 8.23]{GT}), for all $h\leq h_0$ we get
	\begin{equation*}
	\begin{aligned}
		&\osc (u, S_\varphi(x_0, h)) \\
		&\leq C_4\left(\frac{h}{h_0}\right)^{\gamma_3}
		\left(\osc (u, S_\varphi(x_0, h_0))
		+C_3 \norm{u}_{L^\infty(S_\varphi(x_0,h_0))}h_0^{\gamma_2}\right. \\
		&\qquad+\left.C_1(\norm{\ff}_{L^\infty(S_\varphi(x_0, 2h_0))}+\norm{f}_{L^r(S_\varphi(x_0, 2h_0))})h_0^{\gamma_1}\right)\\
		&\leq C_4\left(\frac{h}{h_0}\right)^{\gamma_3}
		\left((2+C_3h_0^{\gamma_2})\norm{u}_{L^\infty(S_\varphi(x_0,h_0))}
		+C_1(\norm{\ff}_{L^\infty(S_\varphi(x_0, 2h_0))}+\norm{f}_{L^r(S_\varphi(x_0, 2h_0))})h_0^{\gamma_1}
		\right)
		\text{,}
	\end{aligned}
	\end{equation*}
	where $C_4 = C_4(\beta)>0$ and $\gamma_3 = \gamma_3(\beta)>0$. 
	This gives the desired oscillation estimate (\ref{oscest}).
	The proof of the Theorem is complete.
\end{proof}

Now, we combine the interior estimate in Lemma \ref{lemma5}
with the H\"older estimate in Corollary \ref{linftyhoelder}
to prove Theorem \ref{mainthm}.
\begin{proof}[Proof of Theorem \ref{mainthm}]
	From Corollary \ref{linftyhoelder},
	for all $x,y\in S_\varphi(x_0, h_0)$, we have
	\begin{equation}\label{8-ide1}
	\begin{aligned}
		|u(x)-u(y)|\leq C_1
		\left(\norm{\ff}_{L^\infty(S_\varphi(x_0, 2h_0))}+\norm{f}_{L^r(S_\varphi(x_0, 2h_0))}
			+\norm{u}_{L^\infty(S_\varphi(x_0,h_0))} \right) |x-y|^\gamma
		\text{,}
	\end{aligned}
	\end{equation}
	where $C_1$ depends on $n$, $\lambda$, $\Lambda$, $r$, $\ee^*$, 
	$\norm{\bb}_{L^\infty(S_\varphi(x_0, 2h_0))}$, $\norm{\bbb}_{L^\infty(S_\varphi(x_0, 2h_0))}$, 
	$\norm{\dv\bbb}_{L^n(S_\varphi(x_0, 2h_0))}$,
	$h_0$, and
	$\mathrm{diam}(S_\varphi(x_0,4h_0))$,
	and $\gamma $ depends on $n$, $\lambda$, $\Lambda$, $\ee^*$,
	$\norm{\bb}_{L^\infty(S_\varphi(x_0, 2h_0))}$, $\norm{\bbb}_{L^\infty(S_\varphi(x_0, 2h_0))}$,
	$\norm{\dv\bbb}_{L^n(S_\varphi(x_0, 2h_0))}$,
	$h_0$, and $\diam(S_\varphi(x_0, 4h_0))$.

	As 
	\begin{equation*}
	\begin{aligned}
		Du^+ = Du \chi_{\{u > 0\}}
		\text{\quad and }
		Du^- = -Du \chi_{\{u<0\}}
		\text{,}
	\end{aligned}
	\end{equation*}
	$u^+$ and $u^-$ are solutions to 
	\begin{equation*}
	\begin{aligned}
		-\dv(\Phi Du^+ + u^+\bbb) + \bb \cdot Du^+ 
		&= f\chi_{\{u>0\}} - \dv(\ff\chi_{\{u>0\}})
		\text{,}\\
		-\dv(\Phi Du^- + u^-\bbb) + \bb \cdot Du^- 
		&= -f\chi_{\{u<0\}} + \dv(\ff\chi_{\{u<0\}})
		\text{.}
	\end{aligned}
	\end{equation*}
	Therefore, we may apply Lemma \ref{lemma5} to $u^+$ and $u^-$ to get
	\begin{equation}\label{8-ide2}
	\begin{aligned}
		\norm{u}_{L^\infty(S_\varphi(x_0, h_0))}
		\leq C_2(h_0^{-\frac{n}{4}}\norm{u}_{L^{2}(S_\varphi(x_0,2h_0))}+
		h_0^{1-\frac{n}{2}}\norm{\ff}_{L^\infty(S_\varphi(x_0,2h_0))}+
		h_0^{1-\frac{n}{2r}}\norm{f}_{L^r(S_\varphi(x_0,2h_0))})
		\text{,}
	\end{aligned}
	\end{equation}
	where 
	\begin{equation*}
	\begin{aligned}
		C_2 = C_2(n,\lambda,\Lambda, r, \ee^*, 
		\norm{\bb}_{L^\infty(S_\varphi(x_0,2h_0))}, 
		\norm{\bbb}_{L^\infty(S_\varphi(x_0,2h_0))}, h_0
		,\diam (S_\varphi (x_0,4h_0)))>0
		\text{.}
	\end{aligned}
	\end{equation*}

	Combining (\ref{8-ide1}) and (\ref{8-ide2}) completes the proof of the Theorem.
\end{proof}

\subsection*{Acknowledgments}
The author would like to thank his advisor, Professor Nam Q. Le,
for suggesting the problem,
and all the advice and guidance the author has received so far.
The author would also like to thank the anonymous referee 
for providing constructive feedback,
which helped the author in improving this paper.

The research of the author was supported in part by NSF grant DMS-2054686.


\begin{thebibliography}{99}

\bibitem{Ab}
Miguel Abreu, \emph{K\"{a}hler geometry of toric varieties and extremal
  metrics}, Internat. J. Math. \textbf{9} (1998), no.~6, 641--651. 

\bibitem{ACDF}
Luigi Ambrosio, Maria Colombo, Guido De~Philippis, and Alessio Figalli,
  \emph{Existence of {E}ulerian solutions to the semigeostrophic equations in
  physical space: the 2-dimensional periodic case}, Comm. Partial Differential
  Equations \textbf{37} (2012), no.~12, 2209--2227. 

\bibitem{B-S}
Peter Bella and Mathias Sch\"{a}ffner, \emph{Local boundedness and {H}arnack
  inequality for solutions of linear nonuniformly elliptic equations}, Comm.
  Pure Appl. Math. \textbf{74} (2021), no.~3, 453--477. 

\bibitem{Cafw2p}
Luis~A. Caffarelli, \emph{Interior {$W^{2,p}$} estimates for solutions of the
  {M}onge-{A}mp\`ere equation}, Ann. of Math. (2) \textbf{131} (1990), no.~1,
  135--150.

\bibitem{CaffarelliC1a}
Luis~A. Caffarelli, \emph{Some regularity properties of solutions of {M}onge
  {A}mp\`ere equation}, Comm. Pure Appl. Math. \textbf{44} (1991), no.~8-9,
  965--969. 

\bibitem{CG97}
Luis~A. Caffarelli and Cristian~E. Guti\'{e}rrez, \emph{Properties of the
  solutions of the linearized {M}onge-{A}mp\`ere equation}, Amer. J. Math.
  \textbf{119} (1997), no.~2, 423--465. 

\bibitem{CR}
Guillaume Carlier and Teresa Radice, \emph{Approximation of variational
  problems with a convexity constraint by {PDE}s of {A}breu type}, Calc. Var.
  Partial Differential Equations \textbf{58} (2019), no.~5, Paper No. 170, 13.

\bibitem{CW}
Albert Chau and Ben Weinkove, \emph{Monge-{A}mp\`ere functionals and the second
  boundary value problem}, Math. Res. Lett. \textbf{22} (2015), no.~4,
  1005--1022. 

\bibitem{CHLS14}
Bohui Chen, Qing Han, An-Min Li, and Li~Sheng, \emph{Interior estimates for the
  {$n$}-dimensional {A}breu's equation}, Adv. Math. \textbf{251} (2014),
  35--46.

\bibitem{CLS}
Bohui Chen, An-Min Li, and Li~Sheng, \emph{The {A}breu equation with
  degenerated boundary conditions}, J. Differential Equations \textbf{252}
  (2012), no.~10, 5235--5259.

\bibitem{DeGiorgi}
Ennio De~Giorgi, \emph{Sulla differenziabilit\`a e l'analiticit\`a delle
  estremali degli integrali multipli regolari}, Mem. Accad. Sci. Torino. Cl.
  Sci. Fis. Mat. Nat. (3) \textbf{3} (1957), 25--43.

\bibitem{DPFS}
G.~De~Philippis, A.~Figalli, and O.~Savin, \emph{A note on interior
  {$W^{2,1+\varepsilon}$} estimates for the {M}onge-{A}mp\`ere equation}, Math.
  Ann. \textbf{357} (2013), no.~1, 11--22. 

\bibitem{Don05}
S.~K. Donaldson, \emph{Interior estimates for solutions of {A}breu's equation},
  Collect. Math. \textbf{56} (2005), no.~2, 103--142. 

\bibitem{FS}
Renjie Feng and G\'{a}bor Sz\'{e}kelyhidi, \emph{Periodic solutions of
  {A}breu's equation}, Math. Res. Lett. \textbf{18} (2011), no.~6, 1271--1279.

\bibitem{Fig}
Alessio Figalli, \emph{Global existence for the semigeostrophic equations via
  {S}obolev estimates for {M}onge-{A}mp\`ere}, Partial differential equations
  and geometric measure theory, Lecture Notes in Math., vol. 2211, Springer,
  Cham, 2018, pp.~1--42.

\bibitem{FSS}
Bruno Franchi, Raul Serapioni, and Francesco Serra~Cassano, \emph{Irregular
  solutions of linear degenerate elliptic equations}, Potential Anal.
  \textbf{9} (1998), no.~3, 201--216. 

\bibitem{GN1}
Cristian~E. Guti\'{e}rrez and Truyen Nguyen, \emph{Interior gradient estimates
  for solutions to the linearized {M}onge-{A}mp\`ere equation}, Adv. Math.
  \textbf{228} (2011), no.~4, 2034--2070.

\bibitem{GN2}
Cristian~E. Guti\'{e}rrez and Truyen Nguyen, 
  \emph{Interior second derivative estimates for solutions to the
  linearized {M}onge-{A}mp\`ere equation}, Trans. Amer. Math. Soc. \textbf{367}
  (2015), no.~7, 4537--4568. 

\bibitem{GT06}
Cristian~E. Guti\'{e}rrez and Federico Tournier, \emph{{$W^{2,p}$}-estimates
  for the linearized {M}onge-{A}mp\`ere equation}, Trans. Amer. Math. Soc.
  \textbf{358} (2006), no.~11, 4843--4872.

\bibitem{GT}
David Gilbarg and Neil~S. Trudinger, \emph{Elliptic partial differential
  equations of second order}, Classics in Mathematics, Springer-Verlag, Berlin,
  2001, Reprint of the 1998 edition. 
  
\bibitem{Han-Lin}
Qing Han and Fanghua Lin, \emph{Elliptic partial differential equations},
  second ed., Courant Lecture Notes in Mathematics, vol.~1, Courant Institute
  of Mathematical Sciences, New York; American Mathematical Society,
  Providence, RI, 2011.

\bibitem{John}
Fritz John, \emph{Extremum problems with inequalities as subsidiary
  conditions}, Studies and {E}ssays {P}resented to {R}. {C}ourant on his 60th
  {B}irthday, {J}anuary 8, 1948, Interscience Publishers, New York, 1948,
  pp.~187--204. 

\bibitem{KLWZ}
Young~Ho Kim, Nam~Q. Le, Ling Wang, and Bin Zhou, \emph{Singular {A}breu
  equations and linearized {M}onge-{A}mp\`ere equations with drifts},
  To appear in J. Eur. Math. Soc. (JEMS),
  \url{https://doi.org/10.4171/jems/1548}
  
\bibitem{LMA_Green}
Nam~Q. Le, \emph{Remarks on the Green's function of the linearized 
	{M}onge-{A}mp\`ere operator}, Manusr. Math.
	\textbf{149} (2016), no.~1, 45--62.

\bibitem{Boundary_Harnack}
Nam~Q. Le, \emph{Boundary {H}arnack inequality for the linearized
  {M}onge-{A}mp\`ere equations and applications}, Trans. Amer. Math. Soc.
  \textbf{369} (2017), no.~9, 6583--6611. 

\bibitem{SG_LMA}
Nam~Q. Le, \emph{H\"{o}lder regularity of the 2{D} dual semigeostrophic equations
  via analysis of linearized {M}onge-{A}mp\`ere equations}, Comm. Math. Phys.
  \textbf{360} (2018), no.~1, 271--305. 

\bibitem{Le18}
Nam~Q. Le, \emph{On the {H}arnack inequality for degenerate and singular elliptic
  equations with unbounded lower order terms via sliding paraboloids}, Commun.
  Contemp. Math. \textbf{20} (2018), no.~1, 1750012, 38. 
  
\bibitem{Singular_Abreu}
Nam~Q. Le, \emph{Singular {A}breu equations and minimizers of convex functionals
  with a convexity constraint}, Comm. Pure Appl. Math. \textbf{73} (2020),
  no.~10, 2248--2283. 

\bibitem{Convex_Approx}
Nam~Q. Le, \emph{On approximating minimizers of convex functionals with a
  convexity constraint by singular {A}breu equations without uniform
  convexity}, Proc. Roy. Soc. Edinburgh Sect. A \textbf{151} (2021), no.~1,
  356--376. 

\bibitem{Twisted_Harnack}
Nam~Q. Le, \emph{Twisted {H}arnack inequality and approximation of variational
  problems with a convexity constraint by singular {A}breu equations}, Adv.
  Math. \textbf{434} (2023). 

\bibitem{Le24}
Nam~Q. Le, \emph{Analysis of {M}onge-{A}mpère equations}, Graduate Studies in
  Mathematics, vol. 240, American Mathematical Society, 2024.

\bibitem{LNW2p}
Nam~Q. Le and Truyen Nguyen, \emph{Global {$W^{2,p}$} estimates for solutions
  to the linearized {M}onge-{A}mp\`ere equations}, Math. Ann. \textbf{358}
  (2014), no.~3-4, 629--700. 

\bibitem{LN}
Nam~Q. Le and Truyen Nguyen, \emph{Global {$W^{1,p}$} estimates for solutions to the linearized
  {M}onge-{A}mp\`ere equations}, J. Geom. Anal. \textbf{27} (2017), no.~3,
  1751--1788. 

\bibitem{LS}
Nam~Q. Le and Ovidiu Savin, \emph{Boundary regularity for solutions to the linearized
  {M}onge-{A}mp\`ere equations}, {Arch. Ration. Mech. Anal.}
  \textbf{210} (2013), no.~3, 813--836.
  
\bibitem{Abreu_HD}
Nam~Q. Le and Bin Zhou, \emph{Solvability of a class of singular fourth order
  equations of {M}onge-{A}mp\`ere type}, Ann. PDE \textbf{7} (2021), no.~2,
  Paper No. 13, 32. 

\bibitem{Loeper05}
G.~Loeper, \emph{On the regularity of the polar factorization for time
  dependent maps}, Calc. Var. Partial Differential Equations \textbf{22}
  (2005), no.~3, 343--374. 

\bibitem{Maldonado}
Diego Maldonado, \emph{Harnack's inequality for solutions to the linearized
  {M}onge-{A}mp\`ere operator with lower-order terms}, J. Differential
  Equations \textbf{256} (2014), no.~6, 1987--2022. 

\bibitem{MalW2}
Diego Maldonado, \emph{On the {$W^{2,1+\varepsilon}$}-estimates for the
  {M}onge-{A}mp\`ere equation and related real analysis}, Calc. Var. Partial
  Differential Equations \textbf{50} (2014), no.~1-2, 93--114. 

\bibitem{M17}
Diego Maldonado, \emph{{$W^{1,p}_\varphi$}-estimates for {G}reen's functions of
  the linearized {M}onge-{A}mp\`ere operator}, Manuscripta Math. \textbf{152}
  (2017), no.~3-4, 539--554.

\bibitem{M18}
Diego Maldonado, \emph{On certain degenerate and singular elliptic {PDE}s {I}:
  {N}ondivergence form operators with unbounded drifts and applications to
  subelliptic equations}, J. Differential Equations \textbf{264} (2018), no.~2,
  624--678.

\bibitem{M19}
Diego Maldonado, \emph{On certain degenerate and singular elliptic {PDE}s {II}:
  {D}ivergence-form operators, {H}arnack inequalities, and applications}, J.
  Differential Equations \textbf{266} (2019), no.~6, 3679--3731.

\bibitem{Moser}
J\"{u}rgen Moser, \emph{On {H}arnack's theorem for elliptic differential
  equations}, Comm. Pure Appl. Math. \textbf{14} (1961), 577--591. 

\bibitem{MS}
M.~K.~V. Murthy and G.~Stampacchia, \emph{Boundary value problems for some
  degenerate-elliptic operators}, Ann. Mat. Pura Appl. (4) \textbf{80} (1968),
  1--122. 

\bibitem{Nash}
J.~Nash, \emph{Continuity of solutions of parabolic and elliptic equations},
  Amer. J. Math. \textbf{80} (1958), 931--954. 

\bibitem{S}
Ovidiu Savin, \emph{A {L}iouville theorem for solutions to the linearized
  {M}onge-{A}mp\`ere equation}, {Discrete Contin. Dyn. Syst.}
  \textbf{28} (2010), no.~3, 865--873.

\bibitem{Schmidt}
Thomas Schmidt, \emph{{${\rm W}^{2,1+\varepsilon}$} estimates for the
  {M}onge-{A}mp\`ere equation}, Adv. Math. \textbf{240} (2013), 672--689.

\bibitem{TZ}
Lin Tang and Qian Zhang, \emph{Global {$W^{2,p}$} regularity on the linearized
  {M}onge-{A}mp\`ere equation with {VMO} type coefficients}, Results Math.
  \textbf{77} (2022), no.~2, Paper No. 94, 88.

\bibitem{masobolev}
Gu-Ji Tian and Xu-Jia Wang, \emph{A class of {S}obolev type inequalities},
  Methods Appl. Anal. \textbf{15} (2008), no.~2, 263--276.

\bibitem{Trudinger1973}
Neil~S. Trudinger, \emph{Linear elliptic operators with measurable
  coefficients}, Ann. Scuola Norm. Sup. Pisa Cl. Sci. (3) \textbf{27} (1973),
  265--308. 

\bibitem{TW00}
Neil~S. Trudinger and Xu-Jia Wang, \emph{The {B}ernstein problem for affine
  maximal hypersurfaces}, Invent. Math. \textbf{140} (2000), no.~2, 399--422.

\bibitem{TW05}
Neil~S. Trudinger and Xu-Jia Wang, \emph{The affine {P}lateau problem}, J.
  Amer. Math. Soc. \textbf{18} (2005), no.~2, 253--289.

\bibitem{TW08}
Neil~S. Trudinger and Xu-Jia Wang, \emph{Boundary regularity for the
  {M}onge-{A}mp\`ere and affine maximal surface equations}, Ann. of Math. (2)
  \textbf{167} (2008), no.~3, 993--1028. 
  
\bibitem{W}
Ling Wang, \emph{Interior {H}\"older regularity of the linearized 
	{M}onge-{A}mp\`ere equation}, 
	Calc. Var. Partial Differential Equations \textbf{64} (2025),
	no.~1, Paper No. 17, 16.
	
\bibitem{WZ}
Ling Wang and Bin Zhou,
	\emph{Interior estimates for {M}onge-{A}mp\`ere type fourth order equations}.
	Rev. Mat. Iberoam. \textbf{39} (2023), no.~5, 1895--1923.

\bibitem{Zhou}
Bin Zhou, \emph{The first boundary value problem for {A}breu's equation}, Int.
  Math. Res. Not. IMRN (2012), no.~7, 1439--1484.

\end{thebibliography}
\end{document}